\newcounter{hours}\newcounter{minutes}
\newtheorem*{rep@theorem}{\rep@title}
\newcommand{\newreptheorem}[2]{%
\newenvironment{rep#1}[1]{%
 \def\rep@title{#2 \ref{##1}}%
 \begin{rep@theorem}}%
 {\end{rep@theorem}}}
\theoremstyle{plain}
\newtheorem{definition}{Theorem}[section]
\newtheorem{thm}{Theorem}[section]
\newtheorem{lem}[thm]{Lemma}
\newtheorem{cor}[thm]{Corollary}
\newtheorem{prop}[thm]{Proposition}
\theoremstyle{definition}
\newtheorem{DEF}[definition]{Definition}
\newtheorem{property}{Property}
\theoremstyle{remark}                  
\def\E{{\mathbb E}}
\def\F{{\mathcal F}}
\def\P{{\mathbb P}}
\def\R{{\mathbb R}}
\def\real{{\mathbb R}}
\def\indicator{{\mathbf 1}}
\def\integer{{\mathbb Z}}
\def\ep{\varepsilon}
\def\e{\varepsilon}
\def\diam{\textnormal{diam}}
\def\II{I\hspace{-2pt}I}
\def\IV{I\hspace{-2pt}V}
\def\III{I\hspace{-2pt}I\hspace{-2pt}I}
\definecolor{darkgreen}{rgb}{0,0.4,0}
\def\liminf{\mathop{\lim\,\inf}\limits}%
\def\limsup{\mathop{\lim\,\sup}\limits}%
\DeclareMathOperator*{\esssup}{ess\,sup}
\numberwithin{equation}{section}
\begin{document}

\title{Homogenization and Non-Homogenization of certain Non-Convex Hamilton-Jacobi Equations}
\author{William M. Feldman \and Panagiotis E. Souganidis }
\thanks{W. M. Feldman partially supported by NSF-RTG  grant DMS-1246999 .}
\thanks{P. E. Souganidis was partially supported by the NSF grants  DMS-1266383  and   DMS-1600129   .}
\email{feldman@math.uchicago.edu}
\email{souganidis@math.uchicago.edu}
\address{Department of Mathematics, The University of Chicago, Chicago, IL 60637, USA}
\keywords{Stochastic Homogenization, Hamilton-Jacobi Equations, Viscosity Solutions}
\subjclass{35B27, 35R60, 35F21, 60K35, 35D40}
\maketitle

\begin{abstract}
We continue the study of the homogenization of coercive non-convex Hamilton-Jacobi equations in random media identifying two  general classes of  Hamiltonians with very distinct behavior. For the first class there is no  homogenization in a particular environment while for the second homogenization takes place   in environments with finite range dependence. Motivated by the recent counter-example of Ziliotto \cite{Ziliotto}, who constructed  a coercive but non-convex Hamilton-Jacobi equation with stationary ergodic random potential field for which homogenization does not hold, we show that same happens for coercive Hamiltonians which have  a strict saddle-point, a very local property.  We also identify, based on  the recent  work of Armstrong and Cardaliaguet \cite{ArmstrongCardaliaguet} on the homogenization of positively homogeneous random Hamiltonians in environments with finite range dependence, a new general class Hamiltonians, namely equations with uniformly strictly star-shaped sub-level sets,
which homogenize.

\end{abstract}

\section{Introduction}
We continue the study of the homogenization of non-convex Hamilton-Jacobi equations in random media identifying two  general classes of  Hamiltonians with very distinct averaging behavior. 
\medskip

In particular we consider the asymptotic behavior, as $\e \to 0$, of the initial value problem 
\begin{equation}\label{eqn: HJ1}
u^\e_t + H(Du^\e,\tfrac{x}{\e}) = 0  \ \hbox{ in } \  \real^d \times (0,\infty) \qquad  u(x,0) = u_0(x),
\end{equation}
with  $H(\xi,x)$ a stationary ergodic random field in $x\in \real^d$ which is uniformly coercive in $\xi$ but not convex.  

\medskip

A simple rescaling shows that the limiting behavior of the $u^\ep$'s with initial datum $u(x,0)=\xi \cdot x$ for some $\xi \in \R^d$ is the same as the long time average behavior of the solution $u$ of the initial value problem
\begin{equation}\label{takis0}
u_t + H(Du, x) = 0  \ \hbox{ in } \  \real^d \times (0,\infty) \qquad  u(x,0) =\xi \cdot x. 
\end{equation}
Indeed the following is true for all $t>0$:
 \[ \liminf_{\ep  \to 0} u^\ep (0,t)=  \liminf_{s \to \infty} \frac{u(0,s)}{s} \ \text{and}  \ \limsup_{\ep \to 0} u^\ep (0,t)= \limsup_{s \to \infty} \frac{u(0,s)}{s}.\]
There is large body of work about the qualitative and quantitative homogenization of convex Hamilton-Jacobi equations in random media following respectively the papers of Souganidis \cite{Sou}, Rezakhanlou and Tarver \cite{RT}, Lions and Souganidis \cite{LionsSouganidis1,LionsSouganidis2}, Armstrong and Souganidis \cite{ArmstrongSouganidis,ArmstrongSouganidis2},   and Armstrong, Cardaliaguet and Souganidis \cite{ArmstrongCardaliaguetSouganidis}.

\medskip

In spite of this progress whether non-convex coercive Hamilton-Jacobi equations homogenize or not in random environments has been a long  standing open question with some  results available for Hamiltonians of very specific form; see Armstrong, Tran and Yu \cite{ATY,ATY2} and Gao \cite{Gao}. 

\medskip

A significant step in the understanding of the problem is the recent work of  Ziliotto \cite{Ziliotto}, who constructed a  non-convex Hamiltonian $h=h(\xi)$ and a stationary ergodic random field $V=V(x)$, such that \eqref{eqn: HJ1} with Hamiltonian $H(\xi,x) = h(\xi) - V(x)$ does not homogenize for a particular linear initial datum.  

\medskip

In the direction of homogenization the most recent progress is due to Armstrong and Cardaliaguet \cite{ArmstrongCardaliaguet}, who considered, among others,  \eqref{takis0} in environments with finite range of dependence assumption in $x$ and Hamiltonians $H(\xi,x)$ which are positively-homogeneous in the gradient variable $\xi$.

\medskip

Our work builds upon these two contributions. In the non-homogenization direction we identify a general class of coercive nonlinearities $h$, namely  ones with a strict  saddle-point, which, when used in the separated Hamiltonian $H(\xi,x) = h(\xi) - V(x)$ with a potential  $V$ similar to the one in \cite{Ziliotto}, give rise  to non-homogenization. This result shows that non-homogenization may be a consequence of a local property of the nonlinearity. The outcome is also consistent with a conjecture by the second author that homogenization for  non-convex Hamiltonians, in general, may fail,  if the media obstructs the ``characteristics'' from reaching in time $t$ all directions  at distance $\text{O}(t)$.
 
\medskip

In the direction of homogenization we are able to adapt the arguments  of  \cite{ArmstrongCardaliaguet} to the class of Hamiltonians with star-shaped sub-level sets, which  strictly includes  Hamiltonians which are positively homogeneous in the gradient variable.

\medskip

To state the result we first need make explicit the meaning of the strict-saddle point, which, of course, makes sense only for $d\geq 2$. 

\medskip

We say that a continuous function $h:\R^d \to \R$ has a strict saddle point at some $\xi_0 \in \R^d$, if there exist two non-trivial orthogonal subspaces $P_1$ and $P_2,$ which together span $\real^d$, and $\eta >0$ so that for any unit vectors $e \in P_1$ and $f \in P_2$,
\begin{equation}\label{saddle}
 h(\xi_0 + \lambda e) < h(\xi_0) \ \hbox{ for } \ \lambda \in [-\eta,\eta] \setminus \{0\} \ \hbox{ and } \ h(\xi_0 + \lambda f) > h(\xi_0) \ \hbox{ for } \  \lambda \in [-\eta,\eta]\setminus \{0\}.
 \end{equation}
This is, for example, the case when $h$ is $C^2$ at $\xi_0$ and has a strict saddle point in the usual calculus sense, that is 
\begin{equation}\label{saddle smooth}
\begin{cases}
 Dh(\xi_0) = 0  \text{ and there exist $m \in \{1, \ldots,  d-1\}$ such that}\\[1mm]
 D^2h(\xi_0) \ \hbox{ has $m$ strictly positive eigenvalues and $d-m$ strictly negative eigenvalues.}
 \end{cases}
\end{equation}
Of course \eqref{saddle} contains more general non-smooth examples, like the Hamiltonian of \cite{Ziliotto}, and smooth examples like $h(\xi)= \xi_1^2 - \xi_2^4$ which has a strict saddle at the origin in $\real^2$. 

\medskip

Our first result is:
\begin{thm}\label{thm: intro non-hom}
Let $h : \real^d \to \real$ be a continuous coercive Hamiltonian with a strict saddle point at some $\xi_0 \in \R^d$. There exist  a stationary ergodic random field $V$ such  that  the solution $u^\ep$ of \eqref{eqn: HJ1} with  Hamiltonian $H(\xi,x) = h(\xi) - V(x)$ and initial datum $u_0(x) = \xi_0 \cdot x$ does not homogenize in the sense that, for all $t>0$, 
\[ \liminf_{\ep  \to 0} u^\ep (0,t) < \limsup_{\ep \to 0} u^\ep (0,t) \ \hbox{ almost surely.} \]
\end{thm}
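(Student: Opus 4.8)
The plan is to reduce the general strict-saddle situation to Ziliotto's construction by exploiting only the one-dimensional behavior along the ``descending'' direction, while using the ``ascending'' directions to keep the relevant family of sublevel sets disconnected. Concretely, write $\real^d = P_1 \oplus P_2$ as in \eqref{saddle}, fix unit vectors $e \in P_1$ (descending) and $f \in P_2$ (ascending), and consider the affine line $\xi_0 + \lambda e$. By coercivity and continuity of $h$, the function $\lambda \mapsto h(\xi_0 + \lambda e)$ has a strict local max at $\lambda = 0$ on $[-\eta,\eta]$, so there is a closed interval $I = [a,b] \ni 0$, with $a<0<b$, on which it is unimodal: increasing on $[a,0]$, decreasing on $[0,b]$, with $h(\xi_0 + a e) = h(\xi_0 + b e) =: \mu_0 < h(\xi_0)$. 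This is exactly the structure of a one-dimensional coercive non-convex Hamiltonian with a strict interior maximum, which is the template Ziliotto uses.

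Next I would import the random potential $V$ from \cite{Ziliotto} essentially verbatim. Ziliotto constructs a stationary ergodic scalar field $V$ so that, for the one-dimensional equation with Hamiltonian $\lambda \mapsto (\text{his nonlinearity}) - V(x)$ and a well-chosen linear datum, the long-time averages oscillate; the mechanism is that for a range of slopes the two branches of the sublevel set $\{h \le c\}$ are separated and the medium ``traps'' characteristics so that the cell-problem value cannot stabilize. The key point is that this obstruction depends only on the behavior of $h$ near the strict maximum along the line, i.e. on the unimodal profile $h(\xi_0 + \lambda e)$, not on the full shape of $h$. So I would set $H(\xi,x) = h(\xi) - V(x)$ with $V$ given (after an affine rescaling of space to match Ziliotto's correlation scale and an additive constant to normalize $h(\xi_0)$), and use initial datum $u_0(x) = \xi_0 \cdot x$.

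The core estimate is a comparison/sandwich argument showing that the solution $u(x,t)$ of \eqref{takis0} restricted to motion in the $e$-direction is controlled above and below by solutions of the one-dimensional Ziliotto problem. For the upper bound one restricts test functions / uses that along any path the $P_2$-increments can only increase $h$, so the $d$-dimensional value at $(0,t)$ is at most the one-dimensional value; for the lower bound one uses paths that stay on the line $\xi_0 + \real e$ (equivalently, builds sub/supersolutions that are affine in the $P_2$ variables) together with the strict inequality $h(\xi_0 + \lambda f) > h(\xi_0)$ for $\lambda \in [-\eta,\eta]\setminus\{0\}$, which guarantees that deviating into $P_2$ is strictly penalized and hence does not help the infimum. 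Passing to long-time averages and invoking the identity relating $\liminf_{\e\to 0} u^\e(0,t)$, $\limsup_{\e\to 0} u^\e(0,t)$ to $\liminf_{s\to\infty} u(0,s)/s$, $\limsup_{s\to\infty} u(0,s)/s$ stated in the introduction, the $d$-dimensional averages inherit the strict gap $\liminf < \limsup$ almost surely from Ziliotto's one-dimensional result.

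The main obstacle I expect is the lower bound in the sandwich, i.e.\ genuinely ruling out that the characteristics ``escape'' into the ascending directions $P_2$ in a way that smooths out the oscillation. Handling this rigorously at the level of viscosity solutions requires constructing, from Ziliotto's one-dimensional sub- and supersolutions, genuine $d$-dimensional sub/supersolutions of $u_t + h(Du) - V(x) = 0$; the natural ansatz is $w(x,t) = \phi(x\cdot e, t) + (\text{linear in } P_2 \text{ directions})$, and one must check the Hamilton–Jacobi inequality using only \eqref{saddle} near $\xi_0$ and coercivity away from it — in particular verifying that $h$ evaluated at gradients of the form $\xi_0 + \partial_\lambda\phi\, e + (\text{small } P_2\text{-component})$ still obeys the needed one-sided bounds. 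This localization-plus-perturbation step, together with checking that the cutoff to the interval $I$ does not create spurious boundary effects over long times, is where the real work lies; the rest is bookkeeping and a direct appeal to \cite{Ziliotto}.
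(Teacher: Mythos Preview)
Your proposal rests on a misreading of Ziliotto's construction: you treat it as a one-dimensional result (``the one-dimensional Ziliotto problem'') and try to reduce the $d$-dimensional saddle to the behavior of $h$ along the single line $\xi_0 + \real e$. This cannot work, because in $d=1$ every coercive Hamilton--Jacobi equation homogenizes in stationary ergodic media (this is in \cite{ATY2} and implicit earlier); non-homogenization is a genuinely multi-dimensional phenomenon. Ziliotto's random field already lives in $\real^2$ and uses \emph{both} saddle directions: it contains, at infinitely many dyadic scales near the origin, long horizontal segments on which $V=-1$ \emph{and} long vertical segments on which $V=+1$.

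Consequently the sandwich/barrier ansatz you propose, $w(x,t)=\phi(x\cdot e,t)+(\text{affine in }P_2)$, is the wrong shape. An affine $P_2$-part gives a \emph{constant} $P_2$-component of $Dw$, whereas the mechanism requires that component to switch sign across the random segment. The paper's barrier is
\[
v_+(x,t)= -(1-c_\delta)t + |x_2-X_2| + (\delta|x_1-X_1|+2(t-T_n))_+,
\]
placed on a horizontal segment where $V=-1$; on that segment $D_x^-v_+ \subset \{0\}\times[-1,1]$, and the supersolution inequality uses $h(\xi_0+\lambda f)\geq h(\xi_0)$ from the \emph{ascending} direction, not the descending one. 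The symmetric subsolution is built on a vertical segment where $V=+1$ and uses the descending direction. Thus both halves of \eqref{saddle} and both families of random segments are indispensable; neither the $\liminf$ nor the $\limsup$ bound can be extracted from a one-dimensional comparison. Your identified ``main obstacle'' (ruling out escape into $P_2$) is not a technical nuisance but precisely the place where the argument must be two-dimensional.
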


Actually we are able to relax the assumption that $ Dh(\xi_0) = 0$ in \eqref{saddle},  if we allow for space-time stationary ergodic Hamiltonians. The result is:

\begin{cor}\label{cor: intro non-hom space-time}
Let $h : \real^d \to \real$ be a smooth, coercive with super-linear growth Hamiltonian and suppose that there is  $\xi_0 \in \real^d$ such that, for some $1 \leq m \leq d-1$, $ D^2h(\xi_0)$  has $m$ strictly positive  and $d-m$ strictly negative eigenvalues.  There exists a space-time stationary ergodic random field $V=V(x,t)$ such that the solution $u^\ep$ to the initial value problem 
 $$ u^\ep_t + h(Du^\ep) - V(\tfrac{x}{\e},\tfrac{t}{\e}) = 0 \ \hbox{ in } \ \real^d \times (0,\infty) \ \qquad  u(x,0) = \xi_0 \cdot x$$ 
 does not homogenize, in the sense that, for all $t>0$, 
$$ \liminf_{\ep  \to 0} u^\ep (0,t) < \limsup_{\ep \to 0} u^\ep (0,t) \ \hbox{ almost surely.} $$
\end{cor}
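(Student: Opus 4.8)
The plan is to deduce Corollary~\ref{cor: intro non-hom space-time} from Theorem~\ref{thm: intro non-hom} by a lifting trick that trades the vanishing-gradient hypothesis $Dh(\xi_0)=0$ for one extra space-time dimension. Set $b := Dh(\xi_0) \in \real^d$, which may now be nonzero, and define on $\real^{d+1}$ (writing the extra variable as $s$, dual to the time-like slot) the Hamiltonian
\[
\tilde h(\xi, p) := h(\xi) - p - b \cdot (\xi - \xi_0),
\]
where $(\xi,p) \in \real^d \times \real$. The idea is that the affine correction $-b\cdot(\xi-\xi_0)$ kills the gradient of $h$ at $\xi_0$ in the $\xi$-directions, the new linear variable $p$ supplies a direction along which $\tilde h$ is strictly monotone (hence we can steal one ``positive'' or ``negative'' direction for free), and coercivity of $\tilde h$ in $(\xi,p)$ follows from superlinearity of $h$ together with the linear term in $p$. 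First I would verify that $\tilde h$ is continuous, coercive on $\real^{d+1}$, and has a strict saddle point at $(\xi_0, p_0)$ for an appropriate $p_0$: in the $\xi$-directions the Hessian $D^2\tilde h(\xi_0) = D^2 h(\xi_0)$ still has $m$ positive and $d-m$ negative eigenvalues, and in the $p$-direction $\tilde h$ is linear and strictly decreasing, so one takes $P_1$ (the ``below'' subspace) to include the $p$-axis together with the negative eigenspace of $D^2h(\xi_0)$ and $P_2$ to be the positive eigenspace — this gives the splitting required by \eqref{saddle} with $\dim P_1 + \dim P_2 = d+1$, both nontrivial. One must check the strict inequalities in \eqref{saddle} hold on a genuine $\eta$-neighborhood; because $\tilde h$ is smooth near $(\xi_0,p_0)$ with a nondegenerate Hessian plus a linear term, a second-order Taylor expansion makes this routine.

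Next I would apply Theorem~\ref{thm: intro non-hom} in dimension $d+1$ to $\tilde h$ with base point $(\xi_0,p_0)$, obtaining a stationary ergodic field $\tilde V$ on $\real^{d+1}$ and non-homogenization of the equation $w^\ep_t + \tilde h(Dw^\ep) - \tilde V(\tfrac{x}{\ep},\tfrac{y}{\ep}) = 0$ with datum $(\xi_0,p_0)\cdot(x,y)$, where $(x,y) \in \real^d \times \real$. The key reduction is then to relate solutions of this $(d+1)$-dimensional problem to solutions of the target $d$-dimensional space-time-random problem. The mechanism is the standard correspondence between an extra spatial variable entering a Hamiltonian linearly and the time dependence: if $w(x,y,t)$ solves the $(d+1)$-dimensional equation and we seek a solution of the form $w(x,y,t) = p_0 y + u(x, t + y)$ — i.e.\ we absorb the $y$-variable by translating time — then $Dw = (D_x u, p_0 + u_\tau)$ where $\tau = t+y$, and plugging in, the $-p$ term in $\tilde h$ produces $-(p_0 + u_\tau) = -p_0 - u_\tau$, while $w_t = u_\tau$; the $u_\tau$ contributions must be arranged to cancel, leaving an equation for $u$ in the variables $(x,\tau)$ with a Hamiltonian $h(D_x u) - b\cdot(D_x u - \xi_0) - p_0$ and a potential $\tilde V$ now read along the slice, which becomes space-time stationary ergodic in $(x,\tau)$. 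I would then further remove the residual affine term $-b\cdot(D_x u-\xi_0)-p_0$ by a final change of unknown $u(x,\tau) = \xi_0\cdot x + \mathrm{const}\cdot \tau + v(x,\tau)$ — linear shifts of this kind conjugate $h$ to $h(\cdot + \xi_0) - \text{affine}$ and hence can be tuned to return to $h$ itself up to an additive constant absorbed into $V$ — so that $v$ solves exactly $v_t + h(Dv) - V(\tfrac{x}{\ep},\tfrac{\tau}{\ep}) = 0$ with $v(x,0) = \xi_0\cdot x$ and $V(x,\tau) := \tilde V(x, \tau - t, \cdot)$ suitably reindexed, which is space-time stationary ergodic.

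Finally I would transfer the non-homogenization conclusion: the oscillation $\liminf_{\ep\to 0} w^\ep(0,0,t) < \limsup_{\ep\to 0} w^\ep(0,0,t)$ from Theorem~\ref{thm: intro non-hom}, evaluated along the affine change of variables, becomes precisely $\liminf_{\ep\to 0} u^\ep(0,t) < \limsup_{\ep\to 0} u^\ep(0,t)$ almost surely for the target equation, since all the transformations are deterministic affine maps in the unknown and in the independent variables and do not affect whether the limit exists. The main obstacle I expect is bookkeeping the change of variables cleanly at the level of \emph{viscosity} solutions — in particular making sure the substitution $w(x,y,t)=p_0 y + u(x,t+y)$ is a genuine bijection between viscosity solutions of the two problems (this is a comparison/uniqueness argument using coercivity, so that the lifted solution is the unique solution and agrees with the pushforward), and verifying that restricting the $(d+1)$-dimensional stationary ergodic field to the graph $\{y = \tau - t\}$ (equivalently viewing $\tilde V(x, \tau - t)$ as a field in $(x,\tau)$) genuinely yields a space-time stationary ergodic field with respect to the $\real^d\times\real$-action — ergodicity is preserved because the action is by a full-dimensional subgroup of translations, but this should be stated carefully. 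The strict-saddle verification for $\tilde h$ and the coercivity bound are comparatively routine.
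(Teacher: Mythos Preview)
Your lifting to $\real^{d+1}$ does not work as written. The Hamiltonian $\tilde h(\xi,p)=h(\xi)-p-b\cdot(\xi-\xi_0)$ is \emph{not} coercive on $\real^{d+1}$: along the ray $p\to+\infty$ with $\xi$ fixed it tends to $-\infty$, so the hypothesis of Theorem~\ref{thm: intro non-hom} fails and you cannot invoke it. Moreover, the change of variables $w(x,y,t)=p_0y+u(x,t+y)$ does not produce the desired $d$-dimensional evolution: as you yourself note, the $u_\tau$ contributions from $w_t$ and from the $-p$ term in $\tilde h$ cancel, and what remains is a \emph{stationary} constraint $h(D_xu)-p_0-b\cdot(D_xu-\xi_0)=\tilde V(x,y)$ that still depends separately on $y$ (through $\tilde V$) rather than only on $\tau=t+y$. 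So $u$ cannot be a function of $(x,\tau)$ alone, and the correspondence between the two problems breaks down.

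The paper's argument stays in dimension $d$ and is considerably simpler. Set $p_0:=Dh(\xi_0)$ and define $g(\xi):=h(\xi)-p_0\cdot\xi$; superlinear growth of $h$ ensures $g$ is still coercive, and now $Dg(\xi_0)=0$ with $D^2g(\xi_0)=D^2h(\xi_0)$, so $g$ has a strict saddle at $\xi_0$ in the sense of \eqref{saddle}. Theorem~\ref{thm: intro non-hom} then yields a spatial stationary ergodic field $V_0$ on $\real^d$ for which the solution $v$ of $v_t+g(Dv)-V_0(x)=0$, $v(\cdot,0)=\xi_0\cdot x$, does not homogenize. The Galilean change $u(x,t):=v(x-p_0t,t)$ converts this back to $u_t+h(Du)-V_0(x-p_0t)=0$, and $V(x,t):=V_0(x-p_0t)$ is space-time stationary ergodic (the action $\sigma_{(x,t)}:=\tau_{x-p_0t}$ inherits ergodicity from the spatial system). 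Non-homogenization transfers from $v$ to $u$ because the two are related by a deterministic affine change of independent variable.
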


\medskip

Considering the homogenization result we show that the essential feature of the Hamiltonians used in \cite{ArmstrongCardaliaguet} is not positive homogeneity in the gradient variable but rather star-shapedness of the sub-level sets. 

\medskip

A Hamiltonian $H$ is said to have star-shaped sub-level sets (with respect to the origin) if  
\begin{equation}\label{eqn: intro star-shapedness}
\text{for every $\mu \geq \inf_{\real^d} H(\cdot,x)$ and $x \in \real^d$,  $\{\xi: H(\xi,x) \leq \mu\}$ is strictly star-shaped with respect to $0$.}
\end{equation}
An easy generalization of our methods allows to consider Hamiltonians with each $\mu$ sub-level strictly star-shaped with respect to a different point $\xi_\mu$ (independent of $x$).  Actually, since the method is quantitative in nature, it is necessary  to quantify the strict star-shapedness of the sub-level sets; more precise details can be found in section~\ref{sec: hamiltonian assumptions quant}.  
\medskip

The homogenization result we are obtaining here is an extension of \cite{ArmstrongCardaliaguet} which studied positively homogeneous Hamiltonians  and of  \cite{ArmstrongSouganidis} which studied quasi-convex Hamiltonians, that is $H$'s with convex sub-level sets. 

\medskip

A key step in establishing homogenization is to understand the limiting behavior of the approximate correctors $v^\delta(\cdot; \xi)$ for $\xi \in \real^d$, that is  the  solutions of the problem,
\[\delta v^\delta + H(\xi + Dv^\delta,x) = 0 \ \hbox{ in } \ \real^d.\]
In view of the assumed coercivity, homogenization follows (see, for example, \cite{LionsSouganidis1,LionsSouganidis2,ArmstrongSouganidis}) if it is shown that 
\begin{equation}\label{eqn: a cor limit}
-\delta v^\delta(0,\xi) \to \overline{H}(\xi) \ \hbox{ almost surely for every $\xi \in \real^d$.}
\end{equation}
 
\medskip

We state our hommogenization result as a rate of convergence for \eqref{eqn: a cor limit};  the statement here is rather vague and is weaker than the result we actually prove since we are avoiding stating the technical assumptions in the introduction.  The full result can be found in Proposition~\ref{prop: a correctors full rate}.
\begin{thm}\label{thm: intro hom}
If $H$ is a finite range of dependence stationary random field of Hamiltonians satisfying standard coercivity and continuity assumptions and (a quantitative version of) \eqref{eqn: intro star-shapedness}, then, there exists a modulus $\rho$ and constants $C=C(H(\xi))>0$ and $c=c(H(\xi))>0$ such that, for every $\delta, \lambda>0$, 
$$\P(|\delta v^\delta(0,\xi)+\overline{H}(\xi)| >\lambda \delta \rho (\delta|\log\delta|)) \leq C\exp(-c\lambda^{4\wedge d}).$$ 
\end{thm}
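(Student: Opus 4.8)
The plan is to follow the Armstrong--Cardaliaguet scheme for positively homogeneous Hamiltonians, replacing the scaling structure of positive homogeneity by the weaker geometric input of (quantitative) strict star-shapedness of the sub-level sets. The starting point is the family of approximate correctors $v^\delta(\cdot;\xi)$ solving $\delta v^\delta + H(\xi+Dv^\delta,x)=0$ in $\real^d$; by coercivity these are globally Lipschitz uniformly in $\delta$, so $-\delta v^\delta(0;\xi)$ is bounded and the only issue is to produce a quantitative a.s.\ limit. The key reduction, as in \cite{ArmstrongSouganidis,ArmstrongCardaliaguet}, is to introduce the metric-type problem: for a level $\mu$ in the relevant range, let $m_\mu(x,y)$ be the maximal subsolution of $H(Dw,x)\le \mu$ with $w(y)=0$, equivalently the optimal-control/least-action distance associated to the sub-level set $\{\xi:H(\xi,x)\le\mu\}$. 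Strict star-shapedness of these sets is exactly the condition that makes $m_\mu$ behave like a genuine ``first-passage'' quasi-metric: it gives the monotonicity and the quantitative gluing (approximate triangle inequality together with a quantitative comparison of $m_\mu$ and $m_{\mu'}$ for $\mu\le\mu'$) that replaces the role of positive homogeneity in identifying $\overline H$ as a level-set function of the limiting metric.

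From there the argument is the subadditive/superadditive ergodic theory for the quantities $m_\mu(0,R e)$ along rays, upgraded to a concentration estimate using the finite range of dependence. Concretely, I would: (i) prove the deterministic structural lemmas for $m_\mu$ under the quantitative star-shapedness hypothesis of section~\ref{sec: hamiltonian assumptions quant} --- Lipschitz bounds, coercive lower bounds, continuity in $\mu$, and the approximate triangle inequality with controlled error; (ii) apply the finite-range-dependence concentration machinery (a Azuma/Efron--Stein or bounded-differences martingale argument along a chain of $O(R)$ independent blocks) to get that $m_\mu(0,Re)$ concentrates around its mean $\bar m_\mu(e)R$ with Gaussian-type tails in the transverse fluctuation scale, which is where the exponent $4\wedge d$ enters --- the $d$ from the number of independent increments along the ray and the $4$ from the control one gets on the variance/bracket via the Lipschitz bound once $d\ge 4$; (iii) translate concentration for $m_\mu$ into concentration for $-\delta v^\delta(0;\xi)$ by the standard two-sided bound sandwiching $v^\delta$ between appropriately tilted and $\delta$-discounted versions of $m_\mu$ for $\mu$ near $\overline H(\xi)$, picking up the factor $\delta$ and the modulus $\rho$ evaluated at the natural length scale $\delta^{-1}$, i.e.\ at $\delta|\log\delta|$ after optimizing the truncation radius; (iv) finally combine over the finitely many levels/directions needed and use a union bound, absorbing constants into $C,c$.

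The main obstacle I expect is step (i): making the metric problem quantitatively robust \emph{without} positive homogeneity. When $H(\cdot,x)$ is positively homogeneous the sub-level sets are dilates of a single set, so $m_\mu$ scales exactly and comparison across levels is trivial; here one only knows each $\{H(\cdot,x)\le\mu\}$ is strictly star-shaped with a uniform modulus, and one must extract from this a quantitative ``monotone interpolation'' between levels --- essentially that moving the level by $\Delta\mu$ changes the dual gauge (support function of the polar) by a controlled amount, uniformly in $x$ --- and feed it through the variational formula for $m_\mu$ so that the error terms in the approximate triangle inequality and in the $v^\delta$-vs-$m_\mu$ sandwich remain summable at the right rate. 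A secondary technical point is bookkeeping the dependence of $C,c$ only on $H(\xi)$ (the value, or the data at the single slope $\xi$) rather than on global quantities; this requires localizing all the estimates near the level $\overline H(\xi)$, using coercivity to confine the relevant sub-level sets to a fixed compact set of slopes determined by $\xi$. Once the metric problem is set up quantitatively, steps (ii)--(iv) are the now-standard finite-range concentration arguments and should go through essentially as in \cite{ArmstrongCardaliaguet}, with the exponent $4\wedge d$ coming out of the same variance computation.
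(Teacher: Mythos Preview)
Your overall architecture is right, but you are missing the paper's central simplification and, as a result, you have set yourself a harder task than necessary.  The paper does \emph{not} build a new metric theory for non-homogeneous star-shaped Hamiltonians.  Instead, for each fixed level $\mu>0$ it observes that strict star-shapedness of $\{H(\cdot,x)\le\mu\}$ with respect to $0$ yields a well-defined radius function $r_\mu(e,x)$ with $H(r_\mu(e,x)e,x)=\mu$, and then proves (Lemma~\ref{lem: eikonal eqn}) that $H(Dm,x)=\mu$ is \emph{equivalent} in the viscosity sense to the $1$-homogeneous eikonal equation $r_\mu(\widehat{Dm},x)^{-1}|Dm|=1$.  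Thus each fixed-$\mu$ metric problem \emph{is} a metric problem for a positively homogeneous Hamiltonian, and the Armstrong--Cardaliaguet machinery applies to it verbatim; the quantitative star-shapedness modulus $\omega$ enters only through the bounds and Lipschitz constants of $r_\mu$ (Lemma~\ref{lem: r lip}) and through the $\mu$-continuity of $\overline r_\mu$ needed to define $\overline H$ and to link levels back to the approximate correctors.  What you identify as the ``main obstacle'' --- a quantitative monotone interpolation between levels without homogeneity --- is therefore bypassed: comparison across levels is handled by the elementary monotonicity $r_\mu\le r_\nu$ for $\mu\le\nu$ and the Lipschitz bound on $\mu\mapsto r_\mu$, not by any delicate structural argument on $H$ itself.

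Two smaller points.  First, the paper (following \cite{ArmstrongCardaliaguet}) works with the \emph{planar} metric problem $m_\mu(\cdot,\mathcal H_e^-)$ rather than the point-to-point metric $m_\mu(x,y)$ you describe; this is what makes the bias estimate an almost-additivity statement in a single real parameter $t\mapsto \E m_\mu(te,\mathcal H_e^-)$ and what feeds cleanly into the fluctuation estimate via the localization-in-sub-level-sets lemma (Lemma~\ref{lem: localization in sublevels}).  Second, your heuristic for the exponent $4\wedge d$ is not the mechanism in \cite{ArmstrongCardaliaguet}; the exponent there arises in the passage from the planar metric concentration to the approximate corrector, not from a variance count along a ray.
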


The approach in this paper, as in \cite{ArmstrongSouganidis,ArmstrongCardaliaguetSouganidis,ArmstrongCardaliaguet}, is not to study the approximate corrector problem directly but instead to focus on the so called metric problems associated with the Hamiltonian. 
 
\medskip

In particular, given a plane with unit direction $e$,  we study the asymptotic properties of the non-negative  solution $m_\mu$  of the metric problem to the plane
\begin{equation}\label{eqn: intro metric}
H(Dm_\mu,x) = \mu \ \hbox{ in } \ \{ x \cdot e >0\} \ \hbox{ with } \ m_\mu = 0 \ \hbox{ on } \ \mathcal{H}_e:=\{ y\in \R^d: y \cdot e  = 0\}.
\end{equation}
In the context of level set motions this would perhaps be better called the arrival time problem since, in that case,  the solution $m_\mu(x)$ corresponds to the time that a front, which  started from the plane $ \mathcal{H}_e,$ reaches the point $x$.  Then the level sets of $\{y\in \R^d: m_\mu(y) = t\}$ correspond to the locations of the propagating front at times $t>0$.  At first glance this interpretation seems to be limited to positively $1$-homogeneous level set motion type Hamiltonians. However, we show here that metric problems for Hamiltonians with star-shaped sub-levels can be transformed to metric problems for level-set motion type equations.

\medskip

Although we are doubtful that the star-shaped assumption is necessary for homogenization, we explain next why it is (in a sense) necessary for a method like that of \cite{ArmstrongCardaliaguet}, which is based on the solutions of the planar metric problem.  

\medskip

The star-shapedness assumption guarantees uniqueness for metric problems like \eqref{eqn: intro metric}. As a matter of fact, for spatially homogenous Hamiltonians, it is indeed the sharp assumption for uniqueness.  With $x$ dependence,  it is not clear what is the correct assumption for uniqueness of the planar metric problems.  

\medskip

In fact uniqueness for the associated metric problems is not necessary to obtain a homogenized Hamiltonian. For example, \cite{ATY}  deals with this non-uniqueness of the metric problem solutions for  Hamiltonians of the form
$$H(\xi,x) = \Psi(h(\xi)) - V(x),$$
with $h\geq 0$ convex and coercive and $\Psi: [0,\infty) \to \real$ smooth such that  $\lim_{s\to \infty}\Psi(s)=\infty$. Speaking very informally in \cite{ATY}  the metric  problem  $ \Psi(h(Du)) - V(x) = \mu$ is replaced by $h(Du) = \Psi^{-1}(\mu + V(x)),$ where $\Psi^{-1}$ is multi-valued. It turns out, however, that in each continuous branch of $\Psi^{-1}$, the metric problems  for $h$ have unique solutions.  

\medskip

Conceivably there might exist some more general method of parametrizing the non-unique solutions of the planar metric problem as solutions of sub-problems with a more amenable Hamiltonian, however this seems quite difficult to realize.

\medskip

We remark that, in view of our conclusions, the homogenization results in  \cite{ATY} are rather unstable. For example, it is easy to see that arbitrary small perturbations of $H(p,x)=(|p|^2-1)^2 - V(x)$, a typical problem studied in   \cite{ATY}, have strict-saddle points and, hence, homogenization fails. 

\subsection*{Organization of the paper} The paper is divided into two parts dealing separately with the non-homogenization and homogenization results.  Section~\ref{sec: non-hom}, which is  about  non-homogenization, is subdivided itself into several subsections. First we give the set up and precise assumptions on the Hamiltonian and the random field (subsection~\ref{sec: assumptions1}), then in subection~\ref{sec: non-hom 2} we construct barriers which imply non-homogenization, subsections~\ref{sec: Constructing the random field} and~\ref{sec: Properties of the random field} are about the construction of the random field and its properties. Subsection~\ref{sec: mixing prop} is about a mixing property satisfied by the field, and, finally, subsection~\ref{sec: space-time} is about the space-time result. Section 3 is dedicated to the homogenization result and is also divided in several subsections. In subsection~\ref{sec: hom outline} we give a general outline of the proof of quantitative homogenization developed in \cite{ArmstrongCardaliaguetSouganidis,ArmstrongCardaliaguet}. In subsection~\ref{sec: assumptions}  we give the specific assumptions on the Hamiltonian, explain more rigorously the role of star-shapedness, and discuss  possible generalizations. In subsection~\ref{sec: reduction} we explain the reduction to the $1-$positively homogeneous metric problem, and in subsection~\ref{sec:planar} we introduce the planar problem.  In subsections~\ref{sec: The fluctuations estimate}-\ref{sec: bias estimate} we go through the quantitative homogenization proof in more detail and,  in particular, the fluctuations estimate and the convergence of the expectations (bias estimate), giving detailed proofs in the places where the star-shapedness condition comes in.  The effective equation is introduced in subsection~\ref{sec:effective} where we also describe some of its properties, and, finally, in subsection~\ref{sec:connections} we touch upon the relationship with the approximate problem 

\subsection*{Notation} We work in $\R^d$ with the Eucledian metric $d$, and   $S^{d-1}$ and $B_r(x)$ are respectively the unit sphere and the open ball in $\R^d$ centered at $x$ with radius $r$ and $B_r:=B_r(0).$ For $x\in \R^d$, $|x|_\infty:=\max_{i \in \{1,\ldots, m\}}|x_i|$.  Given $a, b\in \R$, $a \vee b:=\max(a,b)$ and $a\wedge b:=\min (a,b)$. We write $A^C$ for the complement of the set $A$. For $A, B \subset \R^d$ compact, we  denote by $d(A,B)$ is their Hausdorff distance.  If  $\mathcal A$ is a collection of subsets of a certain set $\Omega$, 
$\sigma(\mathcal A)$ is the smallest $\sigma$-algebra on $\Omega$ containing $\mathcal A$. Given two quantities  $A$ and $B$, $A \lesssim B$ means that there exists some $C>0$ depending only in the data such  $A \leq CB$. If there $C_1B\leq A \leq C_2B$ we write $A  \approx B$. Finally, $[x]$ denotes the integer part of $x$, and, for $p\in \R^d\setminus \{0\}$, $ \hat p:p/|p|$. 

\subsection*{Terminology} A constant $C$ is said to depend on the data and we write $C(\text{data}),$ if it depends only the several constants in the assumptions. 
Throughout the paper, unless otherwise said, solutions should be interpreted as the Crandall-Lions viscosity solutions. 

\subsection*{Acknowledgments}  The authors would like to thank Scott Armstrong for pointing out the connection between our mixing result Lemma~\ref{lem: mix est} and the notion of polynomial mixing.
%
\section {Non-homogenization for Hamiltonians with a strict saddle point}\label{sec: non-hom}
\subsection{Assumptions and results}\label{sec: assumptions1}
We consider Hamiltonians of the form
\begin{equation}
H(\xi,x) = h(\xi) - V(x).  
\end{equation}
In addition to \eqref{saddle} 
we assume that   
\begin{equation}\label{takis-1}
\text{ $h$ is coercive,}
\end{equation}
and 
\begin{equation}\label{takis1}
\hbox{ $V: \R^d \to [-1,1]$ is  $2$-Lipschitz  stationary ergodic random field on $\real^d$.}
\end{equation}
Although we are yet not introducing here the probability space, we will freely write $\omega \in \Omega$.
\medskip

We show  that the potential $V$ constructed in \cite{Ziliotto} can be used to prove  non-homogenization for this more general class of Hamiltonians;  recall that   \cite{Ziliotto} works with a very specific $h$.
\medskip

We point out that, if \eqref{saddle} holds for some $\eta$, then it holds  for all  smaller $\eta$. Thus the non-homogenization behavior is due to the random field and the local behavior near a generic saddle point of the Hamiltonian and not the global behavior of the specific $h$ used in \cite{Ziliotto}.

\medskip

Since the arguments for $d\geq 3$ are similar to the ones when $d=2$, here, to simplify the presentation,  we assume that $d=2$.

\medskip

Then \eqref{saddle} has the following simpler form:   There exist  two orthogonal unit directions $e_1$ and $e_2$ and $\eta >0$ such  that, for  $ s \in [-\eta,\eta] \setminus \{0\}$, 
\begin{equation}\label{saddle2}
h(\xi_0 + s e_1) < h(\xi_0) <  h(\xi_0 +  s e_2). 
\end{equation}

To further simplify the presentation and the arguments, below we make some more  deductions from assumption \eqref{saddle2}.  After  a rotation, we can assume that $e_1$ and $e_2$ are the axis directions.  Adding a constant and using $\lambda^{-1}h(\xi_0 + \eta^{-1}\xi)$  in place of $h$,  we can assume that $h(\xi_0) = 0$, $\xi_0 = 0$ and  $\eta = 1$.  Finally, by selecting  $\lambda$ appropriately, we have 
$$0<3 \leq \min \{ -h(\pm e_1), h(\pm e_2)\}.$$
The continuity of $h$ yields that, for some small $\delta >0$ and $c_\delta>0$,  
\begin{equation}\label{eqn: h in square}
\begin{cases}
 h(\xi) \leq -2 \ \hbox{ in } \ B_\delta(\pm e_1),  \ \ h(\xi ) \geq 2 \ \hbox{ in } \ B_\delta(\pm e_2), \ \text{and}\\[1.5mm]
 h(\xi) \leq c_\delta \ \hbox{ in } \ |\xi_2| \leq \delta, \  |\xi_1| \leq 1 \ \hbox{ and } \ h(\xi ) \geq -c_\delta \ \hbox{ in } \ |\xi_1| \leq \delta, \ |\xi_2| \leq 1.
 \end{cases}
 \end{equation}
We will assume that we have a random field $V$ which looks -- vis-\`{a}-vis the Hamiltonian $h$ -- qualitatively different at infinitely many different dyadic time/length scales $T_n = 2^n$.

\medskip

More precisely, we assume that $V$ has the following two properties:
\begin{property}\label{prop: horizontal segment}
For every $\theta >0$ and  almost surely, there exists a sequence $n_k(\omega) \to \infty$ such that,
for all $k \geq 1$,  $V(x) = -1$ on a horizontal line segment of length $\frac{4}{\delta}T_{n_k}$ centered at distance from the origin smaller than $[\theta  T_{n_k}]$.
\end{property}

\begin{property}\label{prop: vertical segment}
For every $\theta >0$  and almost surely, there exists a sequence $n_k'(\omega) \to \infty$ such that, for all $k \geq 1$, $V(x) = 1$ on  a vertical line segment of length $\frac{4}{\delta}T_{n_k'}$ centered  at distance smaller than $[\theta T_{n_k'}]$.
\end{property}

In subsection~\ref{sec: Constructing the random field} and ~\ref{sec: Properties of the random field} we construct a random field satisfying \eqref{takis1} and Properties~\ref{prop: horizontal segment} and \ref{prop: vertical segment}.  For the motivation, coming from the differential game interpretation of the Hamilton-Jacobi equation, see \cite{Ziliotto}.  

\medskip

In view of the remark at the beginning of the introduction and the simplifications above we study the long time average of the solutions  to  the initial value problem 
\begin{equation}\label{eqn: t HJeqn}
u_t + h(Du) - V(x) = 0 \ \hbox{ in } \ \real^d \times(0,\infty) \ \hbox{ with } \ u(\cdot,0) \equiv 0. 
\end{equation}
We remind that the choice of initial data is due to our normalization $\xi_0 = 0$.

\begin{prop}\label{prop: non-hom} Assume \eqref{saddle}  and \eqref{takis-1}  and let $u$ be the solution of \eqref{eqn: t HJeqn} with random potential field $V$ satisfying \eqref{takis1} and Properties~\ref{prop: horizontal segment} and \ref{prop: vertical segment}.  Then, almost surely,
\begin{equation}\label{eqn: diff lims}
 \liminf_{t \to \infty} \frac{u(0,t)}{t} \leq -1+c_\delta \ \hbox{ and } \ \limsup_{t \to \infty} \frac{u(0,t)}{t} \geq 1-c_\delta.
 \end{equation}
\end{prop}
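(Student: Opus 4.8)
The plan is to prove the two inequalities in \eqref{eqn: diff lims} separately, in each case by a barrier-and-comparison argument localized at the dyadic scales $T_n$ furnished by Properties~\ref{prop: horizontal segment} and~\ref{prop: vertical segment}. First I would record the crude a priori bound $-t\le u(0,t)\le t$, obtained by comparing $u$ with the spatially constant sub- and supersolutions $\mp t$ (valid because $h(0)=0$ and $-1\le V\le1$); this bound is used only to control $u$ on the lateral faces of the boxes below, and it is the single place where the coercivity \eqref{takis-1} of $h$ is needed (to have the comparison principle available).

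For the bound $\limsup_{t\to\infty}u(0,t)/t\ge 1-c_\delta$, fix a small $\theta>0$, pass to a scale $T:=T_{n'_k}$ from Property~\ref{prop: vertical segment}, and let $\Sigma=\{x_1=x_1^0,\ |x_2-s_0|\le L\}$, $L:=\tfrac2\delta T$, be the vertical segment on which $V\equiv1$, with $|x_1^0|,|s_0|\le\theta T$. On the box $Q=\{|x_1-x_1^0|<2T\}\times\{|x_2-s_0|<L\}\times(0,T)$ I would take
\[
\underline u(x,t)=(1-c_\delta)\,t-|x_1-x_1^0|-\delta\,|x_2-s_0|.
\]
Since $\underline u$ is concave in $x$ and affine in $t$, verifying the subsolution inequality in $Q$ amounts to checking $(1-c_\delta)+h(p)-V(x)\le0$ for $p=\lambda e_1+\mu e_2$ ranging over the (explicit) superdifferential of $\underline u$: on the ridge $\{x_1=x_1^0\}$, which inside $Q$ lies on $\Sigma$ so that $V\equiv1$ there, one has $|\lambda|\le1$ and $|\mu|\le\delta$, hence $h(p)\le c_\delta$ by \eqref{eqn: h in square} and the inequality holds; everywhere else $p\in\overline B_\delta(\pm e_1)$, where $h(p)\le -2$, which beats the worst admissible value $V=-1$. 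On the parabolic boundary of $Q$: $\underline u(\cdot,0)\le0=u(\cdot,0)$; on $\{|x_1-x_1^0|=2T\}$, $\underline u\le (1-c_\delta)T-2T<-T\le u$; and on $\{|x_2-s_0|=L\}$, using $\delta L=2T$, again $\underline u\le(1-c_\delta)T-2T<-T\le u$. Comparison on $Q$ then gives $u\ge\underline u$, and since the origin lies in $Q$ for $\theta<2/\delta$,
\[
u(0,T)\ \ge\ (1-c_\delta)T-|x_1^0|-\delta|s_0|\ \ge\ (1-c_\delta-2\theta)\,T.
\]
Letting $k\to\infty$ and then $\theta\downarrow0$ along a countable sequence (intersecting the corresponding full-measure events) yields the claim almost surely.

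The bound $\liminf_{t\to\infty}u(0,t)/t\le -1+c_\delta$ is the mirror image. Around the horizontal segment where $V\equiv-1$ (Property~\ref{prop: horizontal segment}), which is long in $x_1$, I would work on the box $\{|x_1-x_1^0|<L\}\times\{|x_2-x_2^0|<2T\}\times(0,T)$, narrow in $x_1$, with the convex-in-$x$ supersolution $\overline u(x,t)=-(1-c_\delta)t+|x_2-x_2^0|+\delta|x_1-x_1^0|$, whose valley $\{x_2=x_2^0\}$ lies on the segment inside that box. The supersolution inequality is checked by the same case analysis, now using the lower bounds in \eqref{eqn: h in square} ($h\ge-c_\delta$ near the $e_2$-axis, compensated by $V\equiv-1$; $h\ge 2$ on $\overline B_\delta(\pm e_2)$, which absorbs the worst admissible $V=1$ off the segment), and $u\le t$ in place of $u\ge-t$ on the lateral faces. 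This gives $u(0,T_{n_k})\le(-1+c_\delta+2\theta)T_{n_k}$, and the conclusion follows as before.

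I expect the only genuinely delicate point to be the geometry of these barriers. The ridge (resp. valley) is where the gradient can reach the ``flat'' part of $h$ (where only $h\le c_\delta$, resp. $h\ge-c_\delta$, is available), so there the sub- (resp. super-) solution inequality survives only if $V$ has the favorable sign; this forces placing the ridge exactly on the given segment, hence restricting the comparison to a slab whose width is the segment length $\tfrac4\delta T$. On the lateral faces of that slab the barrier has then decayed by exactly $\delta L=2T$, which is precisely enough to fall below $-t$ (resp. rise above $t$) so that it is comparable to $u$ there; and the transverse decay slope is pinned to $\delta$, since anything larger moves the gradient out of $\overline B_\delta(\pm e_1)$ where $h\le-2$ fails, and anything smaller fails to beat the crude bound on the lateral faces. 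All use of the structure of $h$ enters through \eqref{eqn: h in square}, i.e.\ through the strict-saddle assumption \eqref{saddle}.
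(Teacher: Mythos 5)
Your argument is correct, and its core is the same as the paper's: the barrier with profile $|x_{\perp}| + \delta|x_{\parallel}|$ centered on the segment, the time slope $\pm(1-c_\delta)$, and the use of the strict-saddle bounds \eqref{eqn: h in square} to verify the sub/supersolution inequality by a case analysis on the generalized gradient. The only difference is technical: the paper makes the barrier a \emph{global} supersolution on all of $\real^2\times(0,T_n)$ by appending the term $(\delta|x_1-X_1|+2(t-T_n))_+$, which kicks in an extra time-slope in the outer region so that the supersolution inequality survives there with no boundary data needed; you instead work with the simpler barrier but restrict to a box whose lateral faces lie where the segment ends, and control $u$ there by the crude bound $-t\le u\le t$ (a legitimate use of comparison against constants, since $h(0)=0$ and $|V|\le1$). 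Your choice of box half-widths $2T$ and $L=2T/\delta$ is precisely what makes the barrier drop below $-t$ on those faces, as you observe. Both globalizations are standard and equivalent in spirit; the paper's avoids invoking a local comparison principle with Dirichlet lateral data, while yours keeps the barrier itself simpler. The verification of the kink cases, the selection of the segment scale from Properties~\ref{prop: horizontal segment}--\ref{prop: vertical segment}, the $\theta\downarrow0$ limit, and the symmetry between the two inequalities all match the paper.

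One tiny point to tighten: for the origin to lie inside your box you need $\theta<\min(2,2/\delta)$ (the binding constraint is $\theta<2$ when $\delta<1$), but since you send $\theta\downarrow0$ this is harmless.
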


\subsection{The proof of Proposition~\ref{prop: non-hom} and the construction of the barriers}\label{sec: non-hom 2}
Since the arguments are symmetric, here we only prove the first part of \eqref{eqn: diff lims}.  The idea of the proof is that the potential $V$ creates ``obstacles'' for the propagation along vertical and horizontal directions, thus ``obstructing'' the averaging. The obstacles are quantified by special super-and sub-solutions, that is barriers, to \eqref{eqn: t HJeqn}. This comes to play by the existence of special sub-and super-solutions that force the claimed inequality between the largest and smallest possible long time averaged limits.

\medskip

{\it The proof of Proposition~\ref{eqn: diff lims}}: Fix $\theta >0$, $\omega\in \Omega$ and $n \geq 1$ so that Property~\ref{prop: horizontal segment} holds.  Then $V=-1$ on a horizontal segment of length $\frac{4}{\delta}T_n$ centered at $X=(X_1,X_2)$ with $|X| \leq \theta T_n$.

\medskip
 
 It turns out, as it is shown below, that  
 $$ v_+(x,t) := - (1-c_\delta)t +|x_2-X_2| +(\delta|x_1-X_1| + 2(t-T_n))_+$$
 is a super-solution to 
\begin{equation}\label{eqn: suprsoln}
v_t + H(Dv,x) \geq 0 \ \text{in } \ \R^2\times (0, T_n) \quad \hbox{ and } \quad  v(x,0)\geq 0.
\end{equation}
It follows that
\begin{equation}\label{takis3}
\frac{u(0,T_n)}{T_n} \leq \frac{v_+(0,T_n)}{T_n} \leq \frac{-(1-c_\delta)T_n+(1+\delta)\theta T_n}{T_n}\leq -1+c_\delta + (1+\delta)\e.
\end{equation}
Since there are almost surely infinitely many $T_n$ as above 
$$ \liminf_{t \to \infty} \frac{u(0,t)}{t} \leq -1+c_\delta+(1+\delta)\theta \ \hbox{ almost surely.}$$
Letting $\theta \to 0$ yields 
$$ \liminf_{t \to \infty} \frac{u(0,t)}{t} \leq -1+c_\delta \ \hbox{ almost surely.}$$
\smallskip

Next  we  check that $v_+$ satisfies \eqref{eqn: suprsoln}.  That  $v(x,0)\geq 0$ for all $x\in \R^2$ is immediate.

\medskip

    \begin{figure}[t]\label{fig: barrier}
 \centering
 \def\svgwidth{4in}
  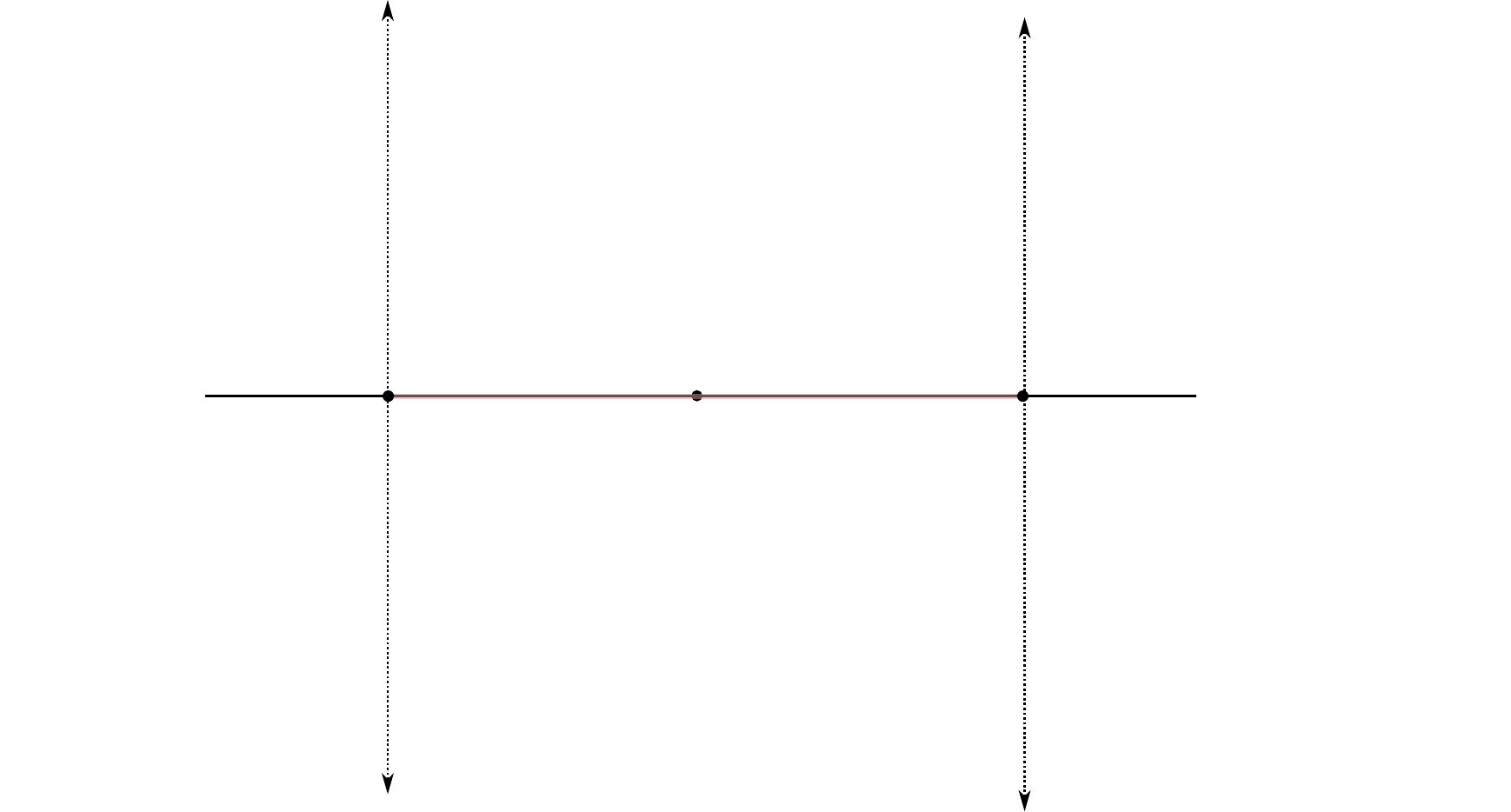
  \caption{The domain of the barrier function $v_+$ is split up into regions.}
 \end{figure}

\medskip

 For the super-solution property in $\R^2\times (0, T_n),$ we need to identify the sub-differential of $v_+$, 
\[ D^-v_+(x,t)= D^-_{x}v_+(x,t)\times \partial_t^-v_+(x,t).\]

Recall that, for a continuous $f:\R^m \to \R$,  the sub-differential  $D^-f(z)$ is
\[D^-f(z):= \{ p \in \R^m :    f(z') \geq f(z) + p\cdot (z'-z) +\text{o}(|z'-z|) \}.\]

We split $\R^2 \times (0, T_n)$ into five subregions depending on the location of $(x,t) \in \R^2\times (0, T_n)$; see Figure~\ref{fig: barrier} for reference.

\medskip

\emph{Region $I$:} The line segment $x_2 = X_2$, $|x_1-X_1| < \frac{2}{\delta}(T_n-t)$ and $t\in (0,T_n)$. Here $V(x) = -1$ and
\[ D^-_{x}v_+(x,t)=\{0\} \times [-1,1] \ \hbox{ and } \ \partial_tv_{+}(x,t)=-1+c_\delta.\]
 Then \eqref{eqn: h in square} yields the super-solution condition, 
\[  -1+c_\delta + h( \xi) - V(x) = h(\xi)+c_\delta  \geq 0 \ \hbox{ for } \ \xi \in \{0\} \times [-1,1]. \]

\medskip

\emph{Region $\II$:}  Off the line segment, $x_2 \neq X_2$ and $|x_1-X_1| < \frac{2}{\delta}(T_n-t)$ and  $t\in (0,T_n)$.   Here  
\[ D_{x}v_+(x,t) = \textup{sgn}(x_2-X_2) e_2 \ \hbox{ and } \ \partial_tv_{+} = -1+c_\delta.\]  
Since  $V \leq 1$ on $\real^d$, it follows that,
\[ \partial_tv_{+} + H(D_{x}v_+,x) = -1+c_\delta + h( \pm e_2) - V(x)  \geq -1+3 - V(x) \geq 3-2 = 1>0.\]

\emph{Region $\III$:}  The vertical lines $|x_1-X_1| = \frac{2}{\delta}(T_n-t)$ except for $x_2 = X_2$ and $t\in (0,T_n)$. We have
\[ D^-v_+(x,t) \subseteq [-\delta,\delta] \times \{1,-1\} \ \hbox{ and } \ \partial_t^-v_{+}(x,t)  \subseteq [-1,1+c_\delta] .\]  The choice of $\delta$ yields that $h(\xi) \geq 2$ for $\xi \in [-\delta,\delta] \times \{1,-1\}$. It follows that, for every $(\xi,s) \in D^-v_+(x,t)$,
\[ s + H(\xi,x) \geq -1 + h(\xi) - V(x) \geq h(\xi)-2 \geq 2-2 =0.\]

 \emph{Region $\IV$:}  The intersection of the horizontal segment $x_2 = X_2$ and the vertical lines $|x_1-X_1| = \frac{2}{\delta}(T_n-t)$ for all $t\in (0,T_n)$. We have $V(x) = -1$ and 
\[ D^-_x v_+(x,t) = [-\delta,\delta] \times [-1,1] \ \hbox{ and } \ \partial^-_t v_+(x,t) = [-1+c_\delta,1+c_\delta].\]
Now \eqref{eqn: h in square} gives $h(\xi) \geq -c_\delta$ for all $\xi \in D^-_xv_+(x,t)$, and,  for every $(\xi,s) \in D^-v_+(x,t),$ we get the supersolution condition
\[ s + H(\xi,x) \geq -1+c_\delta -c_\delta -V(x) = 0,\]
and, finally,
\smallskip

 \emph{Region $V$:} The region  $x\in \{(y_1,y_2) \in \R^2 :|y_1-X_1| > \frac{2}{\delta}(T_n-t)\}$ for all  $t\in (0,T_n)$ .   We know that $V(x) \leq 1$ and,
 \[D^-_x v_+(x,t) \subseteq [-\delta,\delta] \times [-1,1] \ \hbox{ and } \  \partial_t^- v_+(x,t) =\{ 1+c_\delta\}.\]
Since \eqref{eqn: h in square} gives  $h(\xi) \geq -c_\delta$ for all $\xi \in D^-_xv_+(x,t)$,  for every $(\xi,s) \in D^-v_+(x,t),$
\[ s + H(\xi ,x) \geq 1+c_\delta -c_\delta -V(x) \geq 1-V(x) \geq 0.\]

\subsection{The construction of  the random field}\label{sec: Constructing the random field}  We present here the construction of the random field $V$ in $\R^d$ following the ideas in \cite{Ziliotto}.   

\medskip

We fix an intermediate dimension $m \in \{1,\dots,d-1\}$ and differentiate between the $m$-dimensional ``horizontal" subspace spanned by $e_1,\dots,e_m$ and the $d-m$-dimensional ``vertical" subspace spanned by $e_{m+1},\dots,e_d$, and,   for $x \in \real^d,$ we set $x^1: = x_1e_1 + \cdots x_m e_m$ and $x^2: = x_{m+1}e_{m+1} + \cdots x_d e_d$.

\medskip

For each $k \in \integer^d,$ 
let $X_k, Y_k$ be i.i.d. dyadic, that is  $\mathbb{D}: = \{ 2^n : n \in \mathbb{N}\}$ valued,  random variables with distribution, for each $n$, $ \P(X_k = 2^n) = \alpha_d2^{-dn}$ with $  \alpha_d = (2^d-1).$

\medskip

For concreteness we take the  probability space to be $\Omega: = (\mathbb{D}\times \mathbb{D})^{\integer^d}$
and denote by  $\mathcal{F}$ the $\sigma$-algebra generated by cylinder sets. 

\medskip

We give an informal description of the construction, which will then be made precise.  Consider placing at each $k \in \integer^d$ a ``horizontal" cube (parallel to $\real^m \times \{0\}^{d-m}$) centered at $k$ of side length $X_k$ and a ``vertical" cube (parallel to $\{0\}^m \times \real^{d-m}$) centered at $k$ of side length $Y_k$.  Now we remove some of these cubes so that there are no intersections between horizontal and vertical ones.  We go through each of the $X_k$ and $Y_k$ (the order is irrelevant), and, if the horizontal (resp. vertical) cube centered at $k$ intersects any vertical (resp. horizontal) cube with larger (or equal) side length, we ``mark it'' for  deletion.  After going through all the horizontal and vertical cubes and making the required markings, we then remove all the cubes which have been marked for deletion. The remaining configuration of cubes will have no intersections between horizontal and vertical cubes.

\medskip

Now we make this construction precise.  We assign to each $k \in \integer^d$  the mark $M_k \in \{-1,0,1\}$ according to the following rules:
\begin{align}
M_k&=-1 \ \hbox{ if $Y_\ell < \max\{X_k, 2|(\ell-k)^2|_\infty\}$,  for all $\ell \in \integer^d$ with $|(\ell-k)^1|_\infty \leq X_k/2,$} \label{eqn: mark -1}\\
M_k &= 1 \quad \hbox{ if   $X_\ell < \max\{ Y_k, 2|(\ell-k)^1|_\infty\}$, for all $\ell \in \integer^d$ with $|(\ell-k)^2|_\infty \leq Y_k/2$.} \label{eqn: mark 1}\\
M_k &= 0  \quad \hbox{ if neither of the two alternatives in \eqref{eqn: mark -1} and \eqref{eqn: mark 1} hold.} \label{eqn: mark 0}
\end{align}
In terms of the informal description, above $M_k = -1$ (resp. $M_k = 1$) signifies that the horizontal (resp. vertical) cube centered at $k$ is not deleted, while $M_k = 0$ signifies that both the horizontal and vertical cubes centered at $k$ are deleted.  Note that \eqref{eqn: mark -1} and \eqref{eqn: mark 1} are disjoint events, since they are subsets, respectively, of the disjoint events $ \{X_k > Y_k\}$ and  $ \{Y_k > X_k\}$. Moreover, the computations of the following section will verify that $\P(M_k = \pm 1)>0$, that is the construction results in something non-trivial.

\medskip

If  $M_k = -1$ (resp. $M_k = 1$), we  assign to $k\in \mathbb{Z}^d$ a horizontal (resp. vertical) cube $H_k$ (resp. $R_k$) of side length $X_k$ (resp. $Y_k$).  We denote by $H$ (resp. $R$) the union of all such $H_k$ (resp. $R_k$).  We set 
\[ V_-(x): = \min\{ -1+ 2d(x,H),0\} \quad \text{and} \quad V_+(x): = \max\{ 1- 2d(x,R),0\}.\]

\medskip

It follows from the definition of  $M_k$ that  $d(R,H) \geq 1$, and, hence, the sets  $\{V_+ > 0\}$ and $\{V_-<0\}$ are disjoint. 

\medskip

The potential $V$ is is then defined
$$V(x): = V_+(x) + V_-(x).$$
The random field $V$ constructed above is stationary with respect to $\integer^d-$translations. Moreover, since  $\P$ is a product measure, the $\integer^d$ translation action on $\Omega$ is ergodic.  Later (subsection~\ref{sec: space-time}) we will need to construct 
a random field which is stationary with respect to $\real^d$ translations and ergodic.  This is a standard construction; see, for example, the book of Jikov, Kozlov and Oleinik \cite{JikovKozlovOleinik}. The translation  $\tau$ is taken to be a uniform random point of $[0,1)^d$, which is  independent of  $X_k$ and $Y_k$,  and 
$ \overline {V}(x) = V(x-\tau).$
This new random field is $\real^d-$stationary and remains ergodic.  We note that this construction does not  preserve strong mixing, a limitation which is  relevant later in the paper.

 \subsection{The properties of the random field}\label{sec: Properties of the random field}
We show that the random field constructed in the previous subsection satisfies Property~\ref{prop: horizontal segment} and Property~\ref{prop: vertical segment}.  Since they  are symmetric, we will only check Property~\ref{prop: horizontal segment}.  Although we follow some of the arguments in \cite{Ziliotto}, it is,  however,  necessary to use extra care when  applying to converse of the Borel-Cantelli Lemma since the events involved are not independent.

 \medskip
 
We take $T_n:=2^n$ and let  $ 0 < \theta \leq \frac{1}{2}$ be a dyadic rational. The  events $B_n$ related to Property~\ref{prop: horizontal segment} are 
 $$ B_n: = \{ \exists \ k\in \integer^d \hbox{ with }  |k| \leq \theta T_n \hbox{ and } V=-1 \hbox{ on an $m$-cube centered at $k$ of side length } \geq T_n\}.$$
The goal is  to show that $B_n$ occurs infinitely often.  Since they are not independent, it is not enough  to show that $\liminf \P(B_n) >0$.  Instead we need to use a converse version of the Borel-Cantelli Lemma for non-independent sets; see, for example,  Bruss \cite{converseBC} and Tasche\cite{BC2mixing}.  This requires to define the notion of $\alpha$-mixing. 
 \begin{DEF}
 Let $(Z_n)_{n \in \mathbb{N}}$ be a sequence of random variables on a common probability space. The $\alpha$-mixing coefficients are defined by 
 $$\alpha(n): = \sup \{ |\P(E \cap F) - \P(E)\P(F)| : k \in \mathbb{N}, \ E \in \sigma(Z_1,\dots,Z_k) \ \hbox{ and } \ F \in \sigma(Z_{k+n})\}.$$
 If $\alpha(n) \to 0$ as $n \to \infty$,  the sequence $Z_n$ is called $\alpha$-mixing.
 \end{DEF}
 The result we use is the following.
 \begin{thm}[\cite{BC2mixing}]\label{thm: bc2 mixing}
 Let $(A_k)_{k \in \mathbb{N}}$ be a sequence of events with non-increasing probabilities $\P(A_k)$. 
 Then $\P(A_k \hbox{ infinitely often})= 1$ is implied by either of the following two conditions:
 \begin{enumerate}[(i)]
 \item There exists $r \in  [-1,\infty)$ such that
 $$ \sum_{n=1}^\infty n^{-\frac{1}{r+2}}\P(A_n) = +\infty \ \hbox{ and } \ \sum_{n=1}^\infty n^r \alpha(n) < \infty.$$
 \item \label{part: ii} There exists $b \in (0,1)$ such that,
 $$ \sum_{n=1}^\infty \frac{\P(A_n)}{1+\log n} = +\infty \ \hbox{ and } \ \alpha(n) = O(b^n).$$
 \end{enumerate}
 \end{thm}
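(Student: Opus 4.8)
The plan is to run a second-moment argument on a suitably \emph{weighted} count of the $A_n$'s, the weights being dictated by the mixing hypothesis. Set $W_N:=\sum_{n\le N}w_n\mathbf 1_{A_n}$, with $w_n:=n^{-1/(r+2)}$ under \textup{(i)} and $w_n:=(1+\log n)^{-1}$ under \textup{(ii)}; since $0<w_n\le 1$, the divergence assumption says precisely that $\E W_N=\sum_{n\le N}w_n\P(A_n)\to\infty$. Using that each $A_k$ lies in $\sigma(Z_k)$, the $\alpha$-mixing coefficients control pairwise covariances, $|\P(A_m\cap A_n)-\P(A_m)\P(A_n)|\le\alpha(|m-n|)$ for $m\neq n$, whence
\[
\operatorname{Var}(W_N)\;\le\;\sum_{n\le N}w_n^2\P(A_n)+2\!\!\sum_{1\le m<n\le N}\!\!w_mw_n\,\alpha(n-m)\;\le\;\E W_N+2\!\!\sum_{1\le m<n\le N}\!\!w_mw_n\,\alpha(n-m).
\]
If the double sum is $o\bigl((\E W_N)^2\bigr)$ along some sequence $N_k\to\infty$, then $W_{N_k}/\E W_{N_k}\to 1$ in $L^2$, hence almost surely along a further subsequence; since $N\mapsto W_N$ is non-decreasing and the weights are positive, this forces $\sum_n\mathbf 1_{A_n}=\infty$ almost surely. (Alternatively, as $\{A_n\text{ i.o.}\}$ is a tail event of $(\mathbf 1_{A_n})$ and $\alpha(n)\to0$, it would suffice to produce positive probability and appeal to a zero-one law.) So everything reduces to the deterministic estimate on $\sum_{m<n\le N}w_mw_n\,\alpha(n-m)$.

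In the strongly mixing regimes this is immediate: under \textup{(ii)}, $\alpha(n)=O(b^n)$ gives $\sum_d d\,\alpha(d)<\infty$, and the same holds in \textup{(i)} when $r\ge 1$, since $d\le d^r$ and $\sum_d d^r\alpha(d)<\infty$; in either case $\sum_{m<n\le N}w_mw_n\alpha(n-m)\le\bigl(\sup_n w_n^2\bigr)\sum_d d\,\alpha(d)<\infty=o\bigl((\E W_N)^2\bigr)$, and we are done. The substantive range is $-1\le r<1$ in \textup{(i)}, and here the exponent $1/(r+2)$ is exactly the one that makes a Schur-type bound work: with $s=1/(r+2)$ one has the elementary inequality $-s\le r$ (equivalent to $(r+1)^2\ge0$), so $d^{-s}\le d^{r}$; bounding $\sum_m m^{-s}(m+d)^{-s}$ and summing by parts the vanishing tail $\sum_{d\ge D}d^r\alpha(d)\to0$ against the weights, one obtains that $\sum_{m<n\le N}w_mw_n\alpha(n-m)$ is dominated by $\E W_N$ times a tail of $\sum_d d^r\alpha(d)$, which is $o\bigl((\E W_N)^2\bigr)$ along the sequence of $N$ for which $\E W_N$ is large. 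Passing, if necessary, to such a subsequence and restricting to tail indices $n\ge N_0$ (which affects neither the divergence nor the tail event) then closes the second-moment estimate.

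The main obstacle is this last step. Because $\P(A_n)$ is assumed merely non-increasing, $\E W_N$ may diverge arbitrarily slowly, so there is almost no margin in comparing $\sum_{m<n}w_mw_n\alpha(n-m)$ with $(\E W_N)^2$; it is precisely the pairing of the weight exponent $1/(r+2)$ with the decay exponent $r$ in $\sum_n n^r\alpha(n)<\infty$ (and, in the geometric case, the logarithmic weight) that lets the summation-by-parts produce a \emph{vanishing} tail factor rather than a fixed constant. Making this balance rigorous --- checking that the admissible window of weights is nonempty and that the tail of $\sum_d d^r\alpha(d)$ can actually be extracted --- is the technical heart; the details are in \cite{BC2mixing}. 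A minor caveat: the mixing coefficient as defined compares a past $\sigma$-algebra only with a \emph{single} future variable $Z_{k+n}$, which is enough for the pairwise covariance bound used above but would require care if one preferred to argue through block unions $\bigcup_{n\in I}A_n$ rather than through the weighted sum $W_N$.
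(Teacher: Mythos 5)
First, a point of reference: the paper does not prove this statement at all. It is imported verbatim from Tasche \cite{BC2mixing} and used as a black box, so there is no in-paper argument to compare yours against; your sketch has to stand on its own as a proof, and it does not.

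The gap is concrete and occurs already in the case you call ``immediate.'' For a fixed gap $d=n-m$ there are $N-d$ pairs $(m,n)$ with $1\le m<n\le N$, not $d$ of them, so the crude pairwise bound $|\P(A_m\cap A_n)-\P(A_m)\P(A_n)|\le\alpha(n-m)$ yields only $\sum_{m<n\le N}w_mw_n\,\alpha(n-m)\le\bigl(\sum_{d\ge1}\alpha(d)\bigr)\sum_{m\le N}w_m^2$, which grows essentially linearly in $N$; your claimed bound by the fixed constant $\sum_d d\,\alpha(d)$ is false whenever $\alpha(1)>0$, since the $d=1$ terms alone contribute $\alpha(1)\sum_{m<N}w_mw_{m+1}\to\infty$. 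This is fatal for the second-moment scheme as set up: take $\P(A_n)=1/n$ and $\alpha(n)=O(b^n)$ with $\alpha(1)>0$, which satisfies hypothesis (ii); then $\E W_N\asymp\log\log N$ while your estimate of the covariance sum is of order $N/(\log N)^2$, so $\operatorname{Var}(W_N)$ is nowhere near $o\bigl((\E W_N)^2\bigr)$ along any subsequence. To make any version of this work one must interpolate $|\operatorname{Cov}(\mathbf 1_{A_m},\mathbf 1_{A_n})|\le\alpha(n-m)$ against the trivial bound $|\operatorname{Cov}|\le\P(A_n)$ (available because the probabilities are non-increasing); the entire content of the theorem --- the reason the weight exponent $1/(r+2)$ is paired with the decay condition $\sum_n n^r\alpha(n)<\infty$, and the logarithmic weight with geometric decay --- lies in optimizing that interpolation, which in \cite{BC2mixing} is carried out via a Kochen--Stone-type lower bound followed by an upgrade to probability one. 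Since the easy case of your argument already fails and the substantive case is explicitly deferred to the reference, what you have is an outline of the right general strategy rather than a proof. (A secondary remark: your fallback via a zero-one law is also not available as stated, because the $\alpha$-coefficients here compare the past only with a \emph{single} future variable, which does not by itself trivialize the tail $\sigma$-field; fortunately your primary route through the monotonicity of $W_N$ does not need it.)
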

 
Next we formulate, in terms of the random variables $X_k^j$, a sufficient condition for the event $B_n$ to occur. For this, it is convenient  to assume  that $n \geq n_\theta$ sufficiently large so that $\theta T_{n-1} \geq 1$.  The claim is that, in order for $B_n$ to occur, it suffices that 
\begin{equation*}
\begin{cases}
\text{$(1)$ one of the $X_k$'s with $|k| \leq \theta T_n$ has value $T_n$}\\[1mm]
\text{and}\\[1mm]
\hbox{$(2)$ for every $ |k^1|_\infty \leq T_n$ \ $Y_k < \max\{T_n, |k^2|_\infty\}$.}
\end{cases}
\end{equation*}
Note that if $(1)$ holds, there exists $|k_0| \leq \theta T_n$ with $X_k \geq T_n$. For $B_n$ to occur,  we only  need to check that $M_{k_0}=-1$. Indeed, since $\theta \leq 1/2$, and, hence, $\theta/(1-\theta) \leq 1$, the fact that $(2)$ holds for every $ |k^1|_\infty \leq T_n$, and, thus, for every $|(k-k_0)^1|_\infty \leq \frac{1}{2}T_n$,
yields
 \begin{align*} 
 Y_k &<\max\{T_n, |k^2|_\infty\}  \leq \max\{T_n, |(k-k_0)^2|_\infty+|k_0^2|_\infty\}\leq \max\{T_n, |(k-k_0)^2|_\infty+\theta T_n\} \\
 & \leq  \max\{T_n, (1+ \tfrac{\e}{1-\e}) |(k-k_0)^2|_\infty\} \leq \max\{T_n,2|(k-k_0)^2|_\infty\}.
 \end{align*}
 Dealing with  condition $(2)$ is more delicate and we discuss this later.

\medskip

 Next  we consider the events $C_n$ and  $D_n$ which formalize conditions $(1)$ and $(2)$ respectively and together guarantee the occurence of $B_n$. For this, it is convenient to incorporate some independence in the $C_n$.
\medskip
 
 We have:
 \begin{equation}
  C_n := \{ \exists \ k\in \integer^d \hbox{ with } \theta T_{n-1} <  |k|_\infty \leq \theta T_n \hbox{ and } X_k = T_n\},
  \end{equation}
 and
  \begin{equation}
  D_n: = \{ \forall \ k\in \integer^d \hbox{ with } |k^1|_\infty \leq T_n \hbox{ it holds } Y_k < \max\{T_n, |k^2|_\infty\}\}. 
  \end{equation}
Note that the restriction $\theta T_{n-1}<|k|_\infty \leq \theta T_n$ in $C_n$ makes the $C_n$ independent in $n$.  

\medskip  

The following lower bounds on the probability of $C_n$ and $D_n$ are the same as in \cite{Ziliotto}. The proof is presented later in the subsection.

  \begin{lem}\label{lem: prob lower bound}
Let $C_n$ and $D_n$ be defined as above.  Then
$$\inf_{n\geq n_\theta} \P(C_n), \inf_{n\geq n_\theta} \P(D_n)  >0 \ \hbox{ and  } \ \inf_{n \geq n_\theta} \P(C_n \cap D_n) >0 .$$
\end{lem}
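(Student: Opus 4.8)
The plan is to estimate the three probabilities separately, starting with $\P(C_n)$ and $\P(D_n)$, and then combining them. For $C_n$, I would first count the number $N_n$ of lattice points $k \in \integer^d$ with $\theta T_{n-1} < |k|_\infty \leq \theta T_n$; since $\theta T_n = 2\theta T_{n-1}$, this is a comparable-to-$(\theta T_n)^d$ number of sites, say $N_n \geq c_d (\theta T_n)^d$ for $n \geq n_\theta$. For each such $k$ the events $\{X_k = T_n\}$ are independent (the $X_k$ are i.i.d.), each with probability $\alpha_d 2^{-dn} = \alpha_d T_n^{-d}$, so $\P(C_n^c) = \prod_k (1 - \alpha_d T_n^{-d}) \leq \exp(-\alpha_d T_n^{-d} N_n) \leq \exp(-c_d \alpha_d \theta^d)$, giving a lower bound on $\P(C_n)$ that is uniform in $n \geq n_\theta$.

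For $D_n$ I would use a union bound over the "bad" sites. The event $D_n^c$ is that there exists some $k$ with $|k^1|_\infty \leq T_n$ and $Y_k \geq \max\{T_n, |k^2|_\infty\}$. Organize the sum over $k^2$ by dyadic shells: for $k^2$ with $|k^2|_\infty \approx 2^j$ (and $2^j \geq T_n$, else the threshold is just $T_n$), the probability that $Y_k \geq 2^j$ is $\sum_{n' \geq j} \alpha_d 2^{-dn'} \lesssim 2^{-dj}$, while the number of such $k^2$ is $\lesssim 2^{(d-m)j}$ and the number of admissible $k^1$ is $\lesssim T_n^m$. Summing the contribution $T_n^m \cdot 2^{(d-m)j} \cdot 2^{-dj} = T_n^m 2^{-mj}$ over $j \geq \log_2 T_n$ gives a geometric series summing to $O(1)$, in fact to something small if we are careful; for the shell $|k^2|_\infty \leq T_n$ the same computation with $j = \log_2 T_n$ controls the count of $k^1$ times $\P(Y_k \geq T_n) \lesssim T_n^{m} \cdot T_n^{d-m} \cdot T_n^{-d} = 1$. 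The delicate point is that this union bound, done crudely, only gives $\P(D_n^c) \leq C$ rather than $\P(D_n^c) < 1$. To fix this one observes that the dominant contribution comes from the few innermost shells and can be made $\leq 1 - c$ by a more careful constant-tracking (this is exactly the computation in \cite{Ziliotto}); alternatively, one notes that by stationarity and a sub-multiplicativity/correlation argument it suffices to handle a single scale. I would follow the bookkeeping of \cite{Ziliotto} here.

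Finally, for $\P(C_n \cap D_n)$: the events $C_n$ and $D_n$ are \emph{not} independent, since $C_n$ involves the $X_k$ near the origin and $D_n$ involves the $Y_k$ with $|k^1|_\infty \leq T_n$, and these index sets overlap—but crucially $C_n$ depends only on the $X$-variables and $D_n$ only on the $Y$-variables, and the $X$'s and $Y$'s are mutually independent. Hence $\P(C_n \cap D_n) = \P(C_n)\,\P(D_n)$, and the product of the two uniform lower bounds already obtained gives $\inf_{n \geq n_\theta} \P(C_n \cap D_n) > 0$.

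The main obstacle I anticipate is the $D_n$ estimate: getting $\P(D_n)$ bounded \emph{below} (equivalently $\P(D_n^c)$ bounded strictly below $1$) rather than merely $\P(D_n^c)$ bounded, since a naive union bound is not summable-to-less-than-one and one must track constants (or exploit independence across disjoint blocks of $Y$-variables) to close the argument. Everything else—the counting for $C_n$ and the factorization $\P(C_n \cap D_n) = \P(C_n)\P(D_n)$—is routine once the $X$/$Y$ independence is invoked.
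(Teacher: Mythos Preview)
Your treatment of $C_n$ and of the factorization $\P(C_n\cap D_n)=\P(C_n)\P(D_n)$ is correct and matches the paper.  The gap is in the $D_n$ step: the union bound you propose cannot be rescued by more careful constant-tracking.  The sum $\sum_{|k^1|_\infty\le T_n}\P(Y_k\ge\max\{T_n,|k^2|_\infty\})$ is genuinely of order~$1$ (your own shell computation gives a contribution $\approx T_n^m\cdot T_n^{d-m}\cdot T_n^{-d}=1$ from the innermost shell alone), so $\P(D_n^c)\le C$ with $C\ge 1$ is the best a union bound can yield.

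The paper (and \cite{Ziliotto}) does not refine the union bound; instead it exploits independence of the $Y_k$'s to write $\P(D_n)$ directly as a product,
\[
\P(D_n)=\prod_{|k^1|_\infty\le T_n}\Bigl(1-\P(Y_k\ge\max\{T_n,|k^2|_\infty\})\Bigr),
\]
and then uses $(1-x)\ge e^{-2x}$ for $0<x\le 1/2$ to turn the product into $\exp$ of the very same $O(1)$ sum.  This gives $\P(D_n)\ge\exp(-2(1+C_{d,m}))$ uniformly.  The point is that when events are independent, ``$\P(\text{all good})=\prod(1-\P(\text{bad}_k))$'' loses nothing, whereas the union bound ``$\P(\text{some bad})\le\sum\P(\text{bad}_k)$'' throws away exactly the independence you need.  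Once you replace your union-bound plan by this product argument, the rest of your outline goes through as written.
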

 
Just the lower bounds are not sufficient to apply the converse Borel-Cantelli and conclude that, almost surely, $C_n \cap D_n$ occurs infinitely often.  The events $C_n$ are mutually independent, since they are defined in terms of disjoint collections of the $X_k$. The same is true for $\{\{C_n\}_{n=1}^\infty, D_j\}$, since the $Y_k$ and $X_k$ are independent of each other. This is not, however,  the case for the $D_n$'s and to conclude we need to establish the $\alpha$-mixing property discussed earlier in the subsection.
\begin{lem}\label{lem: mixing prop}
Let $n,\ell \in \mathbb{N}$.  For any $E \in \sigma(D_1,\dots, D_\ell)$ and $F \in \sigma(D_{\ell+n})$, 
$$ |\P(E\cap F) - \P(E)\P(F)| \leq C_{d,m}2^{-nm}.$$
\end{lem}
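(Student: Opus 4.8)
The plan is to establish Lemma~\ref{lem: mixing prop} by exploiting the finite range of dependence of the underlying random variables $Y_k$ together with a careful bookkeeping of which $Y_k$'s actually influence the events $D_j$. Recall $D_j = \{\forall\, k \text{ with } |k^1|_\infty \le T_j : Y_k < \max\{T_j, |k^2|_\infty\}\}$; the crucial observation is that $D_j$ is \emph{not} a cylinder event depending only on finitely many $Y_k$, since $k^2$ ranges over all of $\integer^{d-m}$. However, for $|k^2|_\infty$ large the constraint $Y_k < \max\{T_j,|k^2|_\infty\} = |k^2|_\infty$ becomes very likely (failure probability is $\P(Y_k \ge |k^2|_\infty) = \alpha_d \sum_{2^s \ge |k^2|_\infty} 2^{-ds} \approx |k^2|_\infty^{-d}$), so the "tail part" of $D_j$ contributes a small, summable correction. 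First I would split $D_j$ as an intersection $D_j = D_j^{\mathrm{loc}} \cap D_j^{\mathrm{tail}}$, where $D_j^{\mathrm{loc}}$ imposes the constraints for $k$ in a box $\{|k^1|_\infty \le T_j,\ |k^2|_\infty \le L_j\}$ for a suitably chosen $L_j$, and $D_j^{\mathrm{tail}}$ imposes the rest.

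Next, the key point is a spatial separation estimate. The event $D_\ell^{\mathrm{loc}}$ (the relevant index being $\ell$, or more precisely all of $D_1,\dots,D_\ell$) depends only on $Y_k$ with $|k^1|_\infty \le T_\ell$, while $D_{\ell+n}$ involves constraints for $k$ with $|k^1|_\infty \le T_{\ell+n}$. I would want to argue that the \emph{new} information in $D_{\ell+n}$ not already "seen" by $D_1,\dots,D_\ell$ is, up to a small error, supported on $Y_k$ with $|k^1|_\infty > T_\ell$, and since the $Y_k$ are i.i.d., the two families of random variables are literally independent. Concretely: $\sigma(D_1,\dots,D_\ell) \subseteq \sigma(Y_k : |k^1|_\infty \le T_\ell)$, whereas $D_{\ell+n}$, \emph{conditioned on all $Y_k$ with $|k^1|_\infty \le T_\ell$ being "good" in the relevant range}, is determined by $Y_k$ with $|k^1|_\infty > T_\ell$. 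The gap between $T_\ell$ and $T_{\ell+n} = 2^n T_\ell$ is what produces the factor $2^{-nm}$: the number of "bad" configurations one must exclude, or the probability that some $Y_k$ with $T_\ell < |k^1|_\infty \le T_{\ell+n}$ and $|k^2|_\infty \le T_{\ell+n}$ violates its constraint, can be bounded using the tail estimate above, and summing $|k^2|_\infty^{-d}$ over the relevant box together with the counting of $k^1$'s in an annulus of size $\sim T_{\ell+n}^m$ against the constraint scale $T_{\ell+n}^d$ yields a bound of order $(T_\ell / T_{\ell+n})^{m} = 2^{-nm}$ (the exponent $m$ rather than $d$ coming from the $m$-dimensional "horizontal" directions that are pinched).

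The remaining steps are then routine: for $E \in \sigma(D_1,\dots,D_\ell)$ and $F \in \sigma(D_{\ell+n})$, write $F = F' \cap (\text{small-probability exceptional set})$ where $F'$ depends only on $Y_k$ with $|k^1|_\infty > T_\ell$ (hence independent of $E$), so that $\P(E \cap F) = \P(E)\P(F') + O(2^{-nm})$ and likewise $\P(F) = \P(F') + O(2^{-nm})$, giving $|\P(E\cap F) - \P(E)\P(F)| \le C_{d,m} 2^{-nm}$ after collecting errors; the constant depends on $d$ and $m$ through the dimensional sums. I expect the main obstacle to be making precise the decomposition of $D_{\ell+n}$ into an "independent-of-the-past" part plus a genuinely small correction — that is, controlling the fact that $D_{\ell+n}$ does impose constraints on some $Y_k$ with $|k^1|_\infty \le T_\ell$ (for which $|k^2|_\infty$ may be anything), and showing these either coincide with constraints already implied by $D_1,\dots,D_\ell$ or else fall into the small-probability tail. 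This requires checking that the constraint $Y_k < \max\{T_{\ell+n}, |k^2|_\infty\}$ for such a $k$ is \emph{weaker} than the corresponding constraint appearing in $D_\ell$ (since $T_{\ell+n} > T_\ell$), so it contributes nothing new except through the interplay with $|k^2|_\infty$ in the intermediate range $T_\ell < |k^2|_\infty \le T_{\ell+n}$ — and that intermediate range is exactly where the $2^{-nm}$-type estimate is harvested. Once this monotonicity-in-$j$ structure of the constraints is isolated, the probabilistic estimates are elementary geometric series.
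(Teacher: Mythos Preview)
Your approach is essentially the paper's: write $D_{\ell+n} = I_{\ell,n} \cap D_{\ell,n}$, where $I_{\ell,n}$ collects the constraints for $|k^1|_\infty \leq T_\ell$ and $D_{\ell,n}$ those for $T_\ell < |k^1|_\infty \leq T_{\ell+n}$; then $D_{\ell,n} \in \sigma(Y_k : |k^1|_\infty > T_\ell)$ is independent of $\sigma(D_1,\ldots,D_\ell)$, while $\P(I_{\ell,n}^c) \leq C_{d,m} 2^{-nm}$, and the mixing bound follows. The paper packages this as conditional independence of $E$ and $F$ given $\sigma(I_{\ell,n})$, together with $I_{\ell,n}^c \subseteq D_{\ell+n}^c$.

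However, the justification in your final paragraph has a genuine gap. The monotonicity observation --- that the inner constraints of $D_{\ell+n}$ are \emph{weaker} than the corresponding constraints in $D_\ell$ --- is true but irrelevant: you are not conditioning on $D_\ell$, and $E$ may well lie inside $D_\ell^c$, so ``already implied by $D_1,\dots,D_\ell$'' buys you nothing. What is actually needed is the \emph{unconditional} estimate $\P(I_{\ell,n}^c) \leq C_{d,m} 2^{-nm}$, which holds simply because the threshold in $I_{\ell,n}$ is the large number $T_{\ell+n}$. Your sketch of this computation is also misdirected: you count $k^1$'s in the annulus $T_\ell < |k^1|_\infty \leq T_{\ell+n}$, which has $\sim T_{\ell+n}^m$ points and yields only an $O(1)$ bound. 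The correct count is over the \emph{inner} region $|k^1|_\infty \leq T_\ell$ (with $\sim T_\ell^m$ points) against the threshold $T_{\ell+n}$:
\[ \P(I_{\ell,n}^c) \ \leq \ \sum_{|k^1|_\infty \leq T_\ell}\ \sum_{k^2 \in \integer^{d-m}} \P\bigl(Y_k \geq \max\{T_{\ell+n}, |k^2|_\infty\}\bigr) \ \lesssim\ T_\ell^m \cdot T_{\ell+n}^{-m} \ = \ 2^{-nm}. \]
Finally, the $|k^2|$-based loc/tail split in your first paragraph is unnecessary; the relevant decomposition is entirely in the $k^1$ variable.
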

The mutual independence of the $(C_n)_{n=1}^\infty$ and their independence from the $D_n$ yield  the same $\alpha$-mixing property for the sequence $C_n \cap D_n$. 

\medskip

It is immediate from Lemma~\ref{lem: prob lower bound}  that
\[ \sum_{n=1}^\infty \frac{\P(C_n \cap D_n)}{1+\log n} =+\infty.\]
We remark that there is some leeway here to allow for some decay of $\P(C_n \cap D_n)$ while maintaining that the sum is infinite.  Since, in view of  Lemma~\ref{lem: mixing prop}, the events $C_n \cap D_n$ are $\alpha$-mixing with exponential tails, we can apply Theorem~\ref{thm: bc2 mixing}~(\ref{part: ii}) to conclude that
\begin{equation}
 \P(B_n \ \hbox{infinitely often}) \geq \P(C_n \cap D_n \ \hbox{infinitely often}) = 1.
 \end{equation}
  
  \medskip
  
  \textit{Proof of Lemma~\ref{lem: mixing prop}.} We claim that $E \in \sigma(D_{\ell+n})$ and $F \in \sigma(D_1,\dots D_\ell)$ are conditionally independent on $\sigma(I_{\ell,n})$, where $I_{\ell,n}$ is the event
  $$ I_{\ell,n}: = \{ \forall \ k \in \integer^d \hbox{ with } |k^1|_\infty \leq T_\ell \hbox{ and } Y_k < \max\{T_{\ell+n}, |k^2|_\infty\} \}$$
  which has probability close to $1$.  
  
  \medskip
  
First we note that the facts that $D_j \in \sigma((Y_k)_{|k^1| \leq T_j})$ and the  $Y_k$'s  are i.i.d. imply that $D_1,\dots,D_\ell$ are independent of $(Y_k)_{|k^1|_\infty > T_\ell}$.

 \medskip
 
To  check the conditional independence, we first compute $\P(E,F| I_{\ell,n})$ with $E = D_n$ and for this we define 
$$D_{\ell,n}: =  \{ Y_k < \max\{T_{\ell+n}, |k^2|_\infty\} \  \text{holds for all}  \ k\in \integer^d \hbox{ with } T_\ell < |k^1|_\infty \leq T_{\ell+n}\}, $$
and  note that $D_n \cap I_{\ell,n} = D_{\ell,n} \cap I_{\ell,n}$. 
 
\medskip

Since $D_{\ell,n} \in \sigma((Y_k)_{|k^1|_\infty > T_\ell})$,   it is independent of $F$,  and we find  
$$ \P(D_{\ell+n}, F | I_{\ell,n} ) =  \P( D_{\ell,n} , F | I_{\ell,n}) = \P(D_{\ell,n}|I_{\ell,n})\P(F|I_{\ell,n})= \P(D_{\ell+n}|I_{\ell,n})\P(F|I_{\ell,n}). $$
Conditioning on $I_{\ell,n}^C$ we note that $I_{\ell,n}^C \subseteq D_{\ell+n}^C$ and, hence,
$$ \P(D_{\ell+n}, F | I_{\ell,n}^C ) = 0 = \P(D_{\ell+n} | I_{\ell,n}^C)\P(F|I_{\ell,n}^C). $$
Similar arguments show that $D_{\ell+n}^C$ is independent of $F$ conditional on $\sigma(I_{\ell,n})$.

 \medskip

Next we establish that there exists a  constant $C_{d,m}$, which depends only on $m,$ and $d$ such that 
$$\P(I_{\ell,n}^C) \leq C_{d,m} (T_\ell/T_{\ell+n})^m.$$
This follows from the following series of calculations that estimate $\P(I_{\ell,n})$  using  that $ (1-x) \geq e^{-2x}$ for $0< x\leq 1/2$ and the fact that  
there is a dimensional constant $C_{d,m}$ such that, for all $R\geq 1$, 
$$\sum_{\ell \in \mathbb{Z}^{d-m}, |\ell|_\infty \geq R}\frac{1}{|\ell|_\infty^d} \leq C_{d,m}R^{-m}.$$ 
We have:
 \begin{align*}
   \P(I_{\ell,n}) = &\prod_{|k^1|_\infty \leq T_\ell} \left(1-\frac{1}{\max\{|k^2|_\infty,T_{\ell+n}\}^d}\right) 
   \geq \exp\left(-2\sum_{|k^1|_\infty \leq T_\ell} \frac{1}{\max\{|k^2|_\infty,T_{\ell+n}\}^d}\right)\\ 
    &= \exp\left(-2T_\ell^m \sum_{z \in \mathbb{Z}^{d-m}}\frac{1}{\max\{|z|_\infty,T_{\ell+n}\}^d}\right)\\ 
& = \exp\left(-2T_\ell^mT_{\ell+n}^{-d}T_{\ell+n}^{d-m}- 2T_\ell^m\sum_{\ell \in \mathbb{Z}^{d-m}, |\ell|_\infty \geq T_{\ell+n}}\frac{1}{|\ell|_\infty^d}\right) \\[1mm]
   & \geq \exp(-2(1+C_{d,m})(T_\ell/T_{\ell+n})^m) 
   \geq 1-C_{d,m}(T_\ell/T_{\ell+n})^m.
   \end{align*}

  \medskip
  
The conditional independence  for $E \in \sigma(D_1,\dots, D_\ell)$ and $F \in \sigma(D_{\ell+n})$ yields 
  \begin{align*}
   \P(E \cap F) &= \E[\E( \indicator_E \indicator_F | \sigma(I_{\ell,n}))] =  \E[\E( \indicator_E| \sigma(I_{\ell,n}))\E( \indicator_F | \sigma(I_{\ell,n}))] \\[1mm]
   &=\E[ \P(E  | I_{\ell,n})\P(F  | I_{\ell,n})\indicator_{I_{\ell,n}}+ \P(E | I_{\ell,n}^C)\P(F | I_{\ell,n}^C)\indicator_{I_{\ell,n}^C}] \\[1mm]
   &= \P(E  | I_{\ell,n})\P(F  | I_{\ell,n}) \P(I_{\ell,n}) +  \P(E | I_{\ell,n}^C)\P(F | I_{\ell,n}^C)\P(I_{\ell,n}^C) 
   \end{align*}
   Using the previous decomposition and the bound on the probability of $I_{\ell,n}^C$, we find 
   $$ | \P(E \cap F) -  \P(E)\P( F)| \leq 2\P(I_{\ell,n}^C) \leq C_{d,m} (T_\ell/T_{\ell+n})^m = C_{d,m}2^{-md}.$$
\vskip-.1in   \qed

  \medskip
We continue with: 
 \medskip
 
  \begin{proof}[Proof of Lemma~\ref{lem: prob lower bound}.]  We check now, as in  \cite{Ziliotto}, that $\inf_{n\geq 1} \P(C_n)>0$ and  $\inf_{n\geq 1} \P(D_n)  > 0$.

  \medskip
  
The lower bound  for $\P(C_n)$ follows from 
  \begin{equation}
   1-\P(C_n) = (1-\P(X_0 = T_n))^{\theta^d(T_n^d-T_{n-1}^d)}= (1-\frac{\alpha_d}{T_n^d})^{\theta^dT_n^d(1-2^{-d})} \leq \exp(-\alpha_d^2\theta^d).
   \end{equation}
     Note that $n \geq n_\theta$, so that $\theta T_{n-1} \geq 1$, guarantees that $\theta T_n$ and $\theta T_{n-1}$ are both integers.
     
     \medskip
     
   For the lower bound for $\P(D_n),$ we use that there exists a dimensional constant $C_{d,m}$ such that, for all  
   $R\geq 1$, 
   $\sum_{\ell \in \mathbb{Z}^{d-m}, |\ell|_\infty \geq R}\frac{1}{|\ell|_\infty^d} \leq C_{d,m}R^{-m},$
 and we proceed with the following estimates:

  \begin{align*}
   \P(D_n) &= \prod_{|k^1|_\infty \leq T_n} \left(1-\frac{1}{\max\{|k^2|_\infty,T_n\}^d}\right) 
   \geq  \exp\left(-2\sum_{|k^1|_\infty \leq T_n} \frac{1}{\max\{|k^2|_\infty,T_n\}^d}\right) \\
   & \geq \exp\left(-2T_n^m \sum_{\ell \in \mathbb{Z}^{d-m}}\frac{1}{\max\{|\ell|_\infty,T_n\}^d}\right) \\
   & =  \exp\left(-2T_n^mT_n^{-d}T_n^{d-m}- T_n^m\sum_{\ell \in \mathbb{Z}^{d-m}, |\ell|_\infty \geq T_n}\frac{1}{|\ell|_\infty^d}\right) 
    \geq \exp(-2(1+C_{d,m})).
   \end{align*}
   \end{proof}

\subsection{The mixing properties of the random field $V$}\label{sec: mixing prop}  We investigate the mixing properties of  $V$ and work only with its $\integer^d$-stationary version.   The reason is that the construction  of the $\real^d$ stationarity described earlier yields  a random field which is  ergodic but not mixing.  We remark that one could probably maintain all the desired properties of $V$ including both $\real^d$-stationarity and the mixing properties proven below in Lemma~\ref{lem: mix est} by basing the definition of $V$ on vertical and horizontal line/plane segments placed on an independent Poisson family of points rather than the $\integer^d$ lattice points.

\medskip

The total variation based mixing distance between two $\sigma$-algebras $\mathcal{G}_1, \mathcal{G}_2$ is defined
\begin{equation}
\alpha(\mathcal{G}_1,\mathcal{G}_2): = \sup \{  |\P(E \cap F) - \P(E)\P(F)|  : \ E \in \mathcal{G}_1 \ \hbox{ and } \ F \in \mathcal{G}_2 \}.
\end{equation}
For $A\subset \R^d$ a Borel set,  $\mathcal{F}_A$ is the $\sigma$-algebra generated by the random variables $(V(x))_{x \in A}$.  The $\alpha-$mixing coefficients associated with the random field $V$ are given by 
\begin{equation}
\alpha(r): = \sup \{\alpha(\mathcal{F}_A,\mathcal{F}_B) : \hbox{ with } A, B \subset \real^d \hbox{ and } \inf_{x \in A, y \in B} |x-y| \geq r\}.
\end{equation}

The following result was shown in \cite{Ziliotto} for $d=2$. 
\begin{thm}
The strong mixing coefficient $\alpha(r)$ associated with the random field $V$ defined in Section~\ref{sec: Constructing the random field} does not converge to $0$ as $r \to \infty$.  
\end{thm}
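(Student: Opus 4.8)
The plan is to adapt the argument of \cite{Ziliotto} to general $d$, the mechanism being the mutual exclusion that is already built into the marking rules \eqref{eqn: mark -1}--\eqref{eqn: mark 0}. I would fix $n$ large, set $T=T_n=2^n$, a small dyadic $\theta>0$, a large dimensional constant $C_d$, and $r_n:=T/C_d$, and introduce the two events
\[
G_n=\{\exists\,k\in\integer^d:\ |k|_\infty\leq\theta T,\ X_k\geq T,\ M_k=-1\},\qquad
G_n'=\{\exists\,j\in\integer^d:\ |j-p_n|_\infty\leq\theta T,\ Y_j\geq T,\ M_j=1\},
\]
where $p_n:=[r_n]\,e_d$. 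The first, purely deterministic, step is to check that $G_n\cap G_n'=\varnothing$. Indeed, if such an $H_k$ and such an $R_j$ both survived, then (since $p_n$ has trivial horizontal part) $|(j-k)^1|_\infty\leq 2\theta T$ and $|(j-k)^2|_\infty\leq 2\theta T+r_n$, which are both $<\tfrac12 T\leq \tfrac12 X_k\wedge\tfrac12 Y_j$ once $\theta$ and $1/C_d$ are small; hence $j$ lies in the slab of $H_k$ and $k$ in the slab of $R_j$, and moreover $2|(j-k)^2|_\infty< X_k$ and $2|(k-j)^1|_\infty< Y_j$. If $Y_j\geq X_k$, then \eqref{eqn: mark -1} with $\ell=j$ forces $M_k\neq-1$; if $Y_j<X_k$, then \eqref{eqn: mark 1} with the center $k$ forces $M_j\neq 1$; either way we contradict the definitions of $G_n$ and $G_n'$.

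The second step is the uniform lower bounds $\inf_n\P(G_n)>0$ and $\inf_n\P(G_n')>0$. The expected number of admissible surviving horizontal cubes is of order $(\theta T)^d\,\P(X_0\geq T)\approx \theta^d$, since the heavy tail $\P(X_0\geq T)\approx T^{-d}$ is exactly balanced by the $\approx(\theta T)^d$ admissible centers, and the conditional survival probability $\P(M_0=-1\mid X_0\geq T)$ is bounded below uniformly in $n$ by a computation of the same type as in Lemma~\ref{lem: prob lower bound}. A second moment estimate, in which the correlations between distinct candidate cubes are controlled by a decorrelation bound in the spirit of Lemma~\ref{lem: mixing prop}, then upgrades this positive first moment to $\P(G_n)\geq c>0$, and similarly for $G_n'$. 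Consequently $|\P(G_n\cap G_n')-\P(G_n)\P(G_n')|=\P(G_n)\P(G_n')\geq c^2>0$, uniformly in $n$.

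I expect the third step to be the main obstacle: $G_n$ and $G_n'$ are not literally measurable with respect to the field restricted to bounded regions, because a surviving cube of side $\geq T$ may be arbitrarily large, and a long level segment of $V$ could a priori be assembled from many small cubes rather than one big one. Following \cite{Ziliotto}, one circumvents this by using that the structures involved are \emph{macroscopic}: a surviving horizontal (resp. vertical) cube of side $\geq T$ situated within $\theta T$ of the origin (resp. of $p_n$) must leave, inside the box of side comparable to $T$ centered there, a trace on which $V\equiv-1$ (resp. $V\equiv+1$) of diameter $\approx T$, and one shows that the presence of such a trace near the origin is incompatible with the presence of the dual trace near $p_n$. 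Taking $A_n$ and $B_n$ to be these two boxes — which are disjoint and satisfy $\mathrm{dist}(A_n,B_n)\geq r_n$ because $p_n$ lies at distance $r_n\approx T/C_d$ from the origin along $e_d$ — one obtains, carrying this out carefully as in \cite{Ziliotto}, events $E_n\in\mathcal F_{A_n}$ and $F_n\in\mathcal F_{B_n}$ with $E_n\cap F_n=\varnothing$ and $\P(E_n),\P(F_n)\geq c$. Then
\[
\alpha(r_n)\ \geq\ |\P(E_n\cap F_n)-\P(E_n)\P(F_n)|\ =\ \P(E_n)\P(F_n)\ \geq\ c^2>0 ,
\]
and since $r_n\to\infty$ the coefficient $\alpha(r)$ cannot converge to $0$.
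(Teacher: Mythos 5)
The paper does not give a proof of this theorem: it simply notes that "the following result was shown in \cite{Ziliotto} for $d=2$'' and then devotes the rest of Subsection~\ref{sec: mixing prop} to the weaker polynomial-mixing bound Lemma~\ref{lem: mix est}. So there is no in-paper argument to compare against, and I can only assess your proposal on its own.

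Your Steps 1 and 2 are fine: the deterministic exclusion $G_n\cap G_n'=\varnothing$ does follow from \eqref{eqn: mark -1}--\eqref{eqn: mark 1} once $\theta$ and $1/C_d$ are small enough that $2\theta T+[r_n]<T/2$, and uniform lower bounds of the type $\inf_n\P(G_n)>0$ are what Lemma~\ref{lem: prob lower bound} establishes. But the gap is exactly where you flagged it, in Step 3, and what you wrote does not close it. First, the geometry is inconsistent: if $A_n,B_n$ are boxes of side comparable to $T$ centered at $0$ and at $p_n$ with $|p_n|=r_n\approx T/C_d\ll T$, they overlap heavily, so they cannot satisfy $d(A_n,B_n)\geq r_n$; to make them disjoint you must shrink them to side $\ll T/C_d$ or move $p_n$ out to distance $\gg T$, and either change breaks the rest of the sketch. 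Second, and more substantively, the exclusion $G_n\cap G_n'=\varnothing$ is a statement about the marks $M_k$, which depend on $(X_\ell,Y_\ell)$ over unbounded ranges, and it does not descend to events in $\mathcal F_{A_n}$, $\mathcal F_{B_n}$ for disjoint bounded $A_n,B_n$. Seeing $V\equiv-1$ on a plaque inside $A_n$ does not force a single surviving cube of side $\geq T$: such a plaque can be tiled by a patchwork of surviving cubes each much shorter than $T$, and such a patchwork near $0$ places no constraint against a dual $+1$-plaque near $p_n$. The "conflict point'' of your Step 1 argument lies near the origin and hence outside $B_n$; an event in $\mathcal F_{B_n}$ therefore cannot literally be inconsistent with one in $\mathcal F_{A_n}$ on this basis. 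Showing that a diameter-$\approx T$ trace in $A_n$ really does force a macroscopic cube with high probability, and then translating the deterministic exclusion of two macroscopic cubes into an estimate $\P(E_n\cap F_n)\ll \P(E_n)\P(F_n)$ for genuinely local events, is the nontrivial heart of Ziliotto's argument; the sentence "one shows that the presence of such a trace near the origin is incompatible with the presence of the dual trace near $p_n$'' asserts exactly the missing step without supplying it, and as literally stated ($E_n\cap F_n=\varnothing$) it is false.
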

In this section we show that $V$ actually has a mixing property which is weaker than $\alpha$-mixing.  Although it does not seem to be a sufficiently strong  to be used to  prove homogenization, we make note of it here.  
 
\medskip

The following notion of mixing was introduced in Bramson, Zeitouni and Zerner \cite{BramsonZeitouniZerner}. Given $\gamma >0$, the random field $V$ is said to be polynomially mixing with order $\gamma$, if, for every finite subset $\Lambda$ of $\integer^d$ with fattening $\hat\Lambda = \cup_{k \in \Lambda} ([-1/2,1/2]^d+k)$,
\begin{equation}\label{eqn: bz mixing}
\sup_{\ell \in \integer^d\setminus \{0\}} |\ell|^\gamma\alpha(\mathcal{F}_{\hat\Lambda},\mathcal{F}_{\hat\Lambda+\ell}) <+\infty.
\end{equation}
In the next lemma we prove two mixing estimates for $V$. The second, which is a corollary of the first, implies that $V$ is polynomially mixing with order $\gamma = \frac{dm}{d+m}$. 
\medskip

After our work was completed, we were informed that a mixing estimate similar to part~(\ref{part: two}) below has also  been derived by Armstrong and Ziliotto \cite{ArmstrongPC}.  

\begin{lem}\label{lem: mix est}
(i)~There exists  $C=C(d,m)>0$  such that, for all finite $\Lambda \subset \integer^d$ and  $R>1$,
\begin{equation}\label{part: one} 
 \alpha(\mathcal{F}_{\hat\Lambda},\mathcal{F}_{(\hat\Lambda+Q_R)^C}) \leq C\sum_{\ell\in \Lambda} \frac{1}{(d(\ell,\hat\Lambda^C)+R)^{\frac{dm}{d+m}}} \leq \frac{C|\Lambda|}{R^{\frac{dm}{d+m}}}.
 \end{equation}
(ii)~There exists  $C(d,m)>0$ so that, for all finite $\Lambda \subset \integer^d$ and  $\ell \in \integer^d$,
\begin{equation}\label{part: two} 
 \alpha(\mathcal{F}_{\hat\Lambda},\mathcal{F}_{\hat\Lambda+\ell}) \leq C|\Lambda|\frac{1}{1+(|\ell|-\textnormal{diam}(\hat\Lambda))_+^{\frac{dm}{d+m}}}.
 \end{equation}
\end{lem}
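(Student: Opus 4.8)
The strategy is to reduce both estimates to a single geometric fact: the only way information about the field $V$ on a region $A$ can "leak" to a far-away region $B$ is through a cube $H_k$ or $R_k$ (equivalently a random variable $X_k$ or $Y_k$) that is large enough to touch both $A$ and $B$. Concretely, I would introduce the ``influence radius'' $\rho_k$ of the site $k$, namely $\rho_k = \tfrac12\max\{X_k,Y_k\}$ (up to the additive $O(1)$ coming from the $2d(\cdot,\cdot)$ smoothing in the definition of $V_\pm$), so that the value of $V$ on the ball $B_\rho(k)$ depends on $(X_j,Y_j)$ only through those $j$ with $|j-k| \lesssim \rho$ and those $j$ whose own influence radius reaches $k$. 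Then $\mathcal F_{\hat\Lambda}$ is measurable with respect to the family of $(X_k,Y_k)$ together with the marks $M_k$; conditioning on the event that no site has an influence radius exceeding half the gap $r := d(A,B)$, the $\sigma$-algebras $\mathcal F_A$ and $\mathcal F_B$ become \emph{conditionally independent}, exactly as in the proof of Lemma~\ref{lem: mixing prop}. Hence
\[
\alpha(\mathcal F_A,\mathcal F_B) \leq 2\,\P\big(\exists\, k: B_{\rho_k}(k) \text{ meets both } A \text{ and } B\big).
\]

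For part~(i), take $A = \hat\Lambda$ and $B = (\hat\Lambda + Q_R)^C$. A site $k$ with influence radius $\rho$ can reach $B$ only if $\rho \gtrsim d(k,\hat\Lambda^C) + R$ (it must escape the $R$-collar around $\hat\Lambda$, starting from inside $\hat\Lambda$). Since $X_k$ and $Y_k$ are $\mathbb D$-valued with $\P(X_k = 2^n) = \alpha_d 2^{-dn}$, we get the tail bound $\P(\rho_k \geq t) \lesssim t^{-d}$. A union bound over $k \in \mathbb Z^d$ then gives
\[
\alpha(\mathcal F_{\hat\Lambda},\mathcal F_{(\hat\Lambda+Q_R)^C}) \;\lesssim\; \sum_{k\in\mathbb Z^d} \min\Big\{1,\ \big(d(k,\hat\Lambda^C)+R\big)^{-d}\Big\}.
\]
Here the key bookkeeping step: one splits the sum over $k$ according to $j = d(k,\hat\Lambda^C)$. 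For each $\ell \in \Lambda$ the sites $k$ at combinatorial distance $\sim j$ from the boundary that ``see'' $\ell$ number $O(j^{d-1})$ in the relevant directions (and at most $O(j^m)$ or $O(j^{d-m})$ in the directions transverse to the cube's orientation — this asymmetry between horizontal/vertical cubes is what produces the exponent $\tfrac{dm}{d+m}$ rather than something symmetric). Balancing the contributions of the two orientations, summing the resulting geometric-type series, and attributing the total to the nearest $\ell \in \Lambda$ yields $\sum_{\ell\in\Lambda}(d(\ell,\hat\Lambda^C)+R)^{-\frac{dm}{d+m}}$, and bounding each term by $R^{-\frac{dm}{d+m}}$ gives the coarser bound $C|\Lambda| R^{-\frac{dm}{d+m}}$. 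Part~(ii) follows immediately by applying part~(i) with $R \approx |\ell| - \operatorname{diam}(\hat\Lambda)$ when $|\ell| > \operatorname{diam}(\hat\Lambda)$, since then $\hat\Lambda + \ell \subset (\hat\Lambda + Q_R)^C$, and is trivial (bound by $1 \lesssim |\Lambda|$) otherwise.

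The main obstacle, and the step deserving genuine care, is the geometric counting that produces the exponent $\tfrac{dm}{d+m}$: one has to correctly account for the fact that a ``horizontal'' cube of side $s$ centered at $k$ is wide ($\sim s$) only in the $m$ horizontal coordinates and wide in the $d-m$ vertical coordinates for the ``vertical'' cubes, so the set of sites whose cube of a given size can bridge a gap of length $r$ in a \emph{fixed} direction has size governed by an $(m)$- or $(d-m)$-dimensional slab rather than a full $d$-dimensional ball. Optimizing over which orientation is ``responsible'' for a given far pair — equivalently, splitting $\sum_s s^{m-1} \wedge s^{d-m-1}$ against the tail $s^{-d}$ — is exactly where $\tfrac{1}{d/m} + \tfrac{1}{d/(d-m)}$-type arithmetic enters and where I would be most careful to get the constant dependence on $d,m$ (and no dependence on $\Lambda$ beyond $|\Lambda|$) right. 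The conditional-independence reduction itself is a routine repeat of the argument already given for Lemma~\ref{lem: mixing prop}, using that conditioning on ``all influence radii small'' makes the contributions of $A$ and $B$ depend on disjoint blocks of the i.i.d.\ family $(X_k,Y_k)$.
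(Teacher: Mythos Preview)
Your reduction to a single ``bridging cube'' event is where the argument breaks. It is \emph{not} true that $V|_A$ and $V|_B$ are coupled only through a cube $H_k$ or $R_k$ that meets both $A$ and $B$: the marking rule creates a two-step dependency. For instance, with $A=\{0\}$ and $B=\{(L,L)\}$ in $d=2$, a long horizontal segment centered at $(0,L)$ touches $B$ but not $A$, yet it can delete a vertical segment through $0$ and thereby change $V(0)$. So conditioning on ``no cube meets both $A$ and $B$'' does \emph{not} make $\mathcal F_A$ and $\mathcal F_B$ independent, and your displayed bound $\alpha\le 2\,\P(\exists k:B_{\rho_k}(k)\text{ meets both})$ is unjustified. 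Separately, the union bound you write down, $\sum_{k\in\mathbb Z^d}(d(k,\hat\Lambda^C)+R)^{-d}$, diverges: for every $k\notin\hat\Lambda$ one has $d(k,\hat\Lambda^C)=0$, contributing $R^{-d}$ for each of infinitely many $k$. The global conditioning event ``all $\rho_k<r/2$'' you invoke earlier even has probability zero.

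The paper's proof handles exactly this indirect effect by introducing, for each lattice point $\ell\in\Lambda$, \emph{two} localizing events at an intermediate scale $L$: an event $J(\ell,L,R)$ (all $X_k,Y_k$ with $|k-\ell|<L$ are smaller than $R-L$) with $\P(J^C)\lesssim (L/R)^d$, and a more delicate event $I(\ell,L)$ which rules out the indirect marking of cubes through $\ell$ by far-away $X_k,Y_k$; a careful computation with the marking rule gives $\P(I^C)\lesssim L^{-m}$ (not $L^{-d}$). The exponent $\tfrac{dm}{d+m}$ then arises by choosing $L\sim R^{d/(d+m)}$ to balance $(L/R)^d$ against $L^{-m}$, and intersecting over the finitely many $\ell\in\Lambda$. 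Your sketch of ``splitting $\sum_s s^{m-1}\wedge s^{d-m-1}$ against $s^{-d}$'' and the ``$\tfrac{1}{d/m}+\tfrac{1}{d/(d-m)}$'' arithmetic do not correspond to this mechanism; the $m$ does not come from the width of a single bridging cube but from the probability that a chain (vertical segment through $\ell$) $+$ (far horizontal segment marking it) exists.
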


It will be useful in the proof to define a version of $V$ with localized dependence on the coefficients $(X_k,Y_k)$.  Given a subset (finite or infinite) of the lattice $\Lambda \subseteq \integer^d,$ we define $V^\Lambda$ by the same process as in subsection~\ref{sec: Constructing the random field}, except using only the $(X_k,Y_k)$ with $k \in \Lambda$, that is giving mark $M_k = 0$ for every $k \notin \Lambda$.

\medskip

\begin{proof}[{Proof of Lemma~\ref{lem: mix est}.}]   
In order to keep things simple, it  is convenient to view $V$ as defined just on the integer lattice $\integer^d$. 
\medskip

Let 
 $V_* : \integer^d \to \{-1, 0, 1\}$ to be the restriction of $V$ to the integer lattice, and, for any set $\Lambda \subset \integer^d$, set 
$$\F_*(\Lambda): = \sigma(V_*(\ell): \ell \in \Lambda).$$
The mixing properties for $V_*$ yield the ones of $V$, since,  given  $x \in \real^d$, there exists $k \in \integer^d$  such that $x \in Q_x := k+[0,1)^d$, and  then
\begin{equation}
V(x) \in \sigma(V_*(z):  z \ \hbox{ a corner of the lattice cube containing $Q_x$}).
\end{equation}
Indeed observe that out of the two ``horizontal" and two ``vertical" faces of $Q_x$, there is at most one horizontal face $H_x$ such that $d(x,H_x) < 1/2$ and  at most one vertical face $R_x$ such  such  that $d(x,R_x) < 1/2$.  Then 
$$ V(x) = 1-2d(x,R_x) \ \hbox{ if and only if, for all the corners $z$ of $R_x$,  $V_*(z) = 1$,}$$
and
$$ V(x) = -1+2d(x,H_x) \ \hbox{ if and only if, for all the corners $z$ of $H_x$,  $V_*(z) = -1$,}$$
\vskip.075in

and, if neither of the above events hold, then $V(x) = 0$.  It follows  that $V(x)$ is a function of $V_*(z)$ for $z$ which are corners of the lattice cell $Q_x$.

\medskip

We outline next the proof of the mixing properties for $V_*$.  For each $\ell \in \integer^d$ and $L\geq 5$, we construct an event $I(\ell,L)$ such that $V_*(\ell)$ conditioned upon $I(\ell,L)$  is independent of all the $X_k, Y_k$ for $|k-\ell| \geq  L$.  Also for each $\ell \in \integer^d$, $L \geq 5$ and $R \geq L+5$, we construct an event $J(\ell, L,R)$ such that,  for every $|x-\ell| \geq R$, $V(x)$ conditioned upon $J(\ell, L,R)$ is independent of all the $X_k,Y_k$ with $|k| < L$. Then, we choose  $\lambda(R)$  to maximize the probability of the event 
$$K(\ell,R): = I(\ell,(1+\lambda)^{-1}R) \cap J(\ell,(1+\lambda)^{-1}R,R) \ \hbox{ for } \ \ell \in \integer^d \ R\geq 5.$$
Conditioning on $K$, an event which will have probability close to $1$, we obtain the desired mixing estimates.

\medskip

To simplify  the notation, we assume that the events $I(\ell,L)$ and $J(\ell,L,R)$ are centered at $\ell = 0$, since the extension to $\ell\neq 0$ follows by  the stationary translations.  Next  we  write   $I,J$ to mean $I(0,L)$ and $J(0,L,R)$ respectively. 
\medskip

Let $L \geq 5$ and $R \geq L+5$ and define $J(0,L,R)$ by 
\begin{equation}
J(0,L,R) := \{ \forall \ |\ell| < L, \   X_{\ell} \vee Y_\ell < (R-L)\}.
 \end{equation}
Computations similar to the ones in the previous subsection, which we do not repeat here, yield the lower bound
\begin{equation}\label{eqn: J prob est}
 \P(J) \geq (1-c_d(R-L)^{-d})^{c_dL^d} \geq 1-\frac{c_d}{(\frac{R}{L}-1)^d}.
 \end{equation}

\medskip

To make precise the meaning of the conditional independence, we define the conditional probability measure $\P(\cdot |J)$ on $\mathcal{F}$ by 
\vskip.025in
$$\P(E | J) := \P(E \cap J) / \P(J) \ \hbox{ for } \ E \in \mathcal{F}.$$
\vskip.075in
We emphasize this most elementary definition of conditional probability in order to make clear the fact  that we are not using the typical measure theoretic conditioning.  
\medskip

The claim is that, for all $E \in \mathcal{F}_*(\integer^d \setminus Q_R)$ and $F \in \sigma(X_k,Y_k: \ k \in Q_L)$,
\begin{equation}\label{eqn: cond ind J}
\P(E \cap F| J) = \P(E|J)\P(F|J).
\end{equation}
We also make use of the localized version of the coefficient field $V_*^\Lambda$ defined for any set $\Lambda \subset \integer^d$, which,  by its construction, is  $\sigma(X_k,Y_k : k \in \Lambda)$ measurable.  
\medskip

The claim is that
\begin{equation}\label{eqn: equal to localized J}
 V_*^{\integer^d \setminus Q_L}(\ell)\indicator_{J} = V_*(\ell)\indicator_{J} \ \hbox{ for all } \ell \in \integer^d \setminus Q_R.
 \end{equation} 
 \vskip.025in
The conditional independence \eqref{eqn: cond ind J} then follows immediately from \eqref{eqn: equal to localized J}.
 
 \medskip
 
If $|\ell| \geq R$, then either $|\ell^1| \geq R$ or $|\ell^2| \geq R$.  Since the arguments are similar,  here we assume that $|\ell^1| \geq R$. Let $H_k:=k+[-X_{k/2}, X_{k/2}]^m \times \{0\}^{d-m}$ be a horizontal segment which is centered at some $k \in \integer^d$ and passes through $\ell$. In order for $H_k$ to be marked by a segment from $Q_L$,  it must intersect the set $\{z : |z^1| \leq L\}$ and so it is necessary that $X_k \geq (R-L)$. On the event $J$, for any $k \not\in Q_R$ the segment $H_k$ cannot be marked for deletion by any of the $Y_j$ for $j \in Q_L$ since $Y_j < X_k$.  If $V_k$ is a vertical segment passing through $\ell$, then $k^1 = \ell^1$ and so $|k^1| \geq R$. For a horizontal segment centered at a point of $Q_L$ to delete $V_k$ it needs to have length at least $2(R-L)$ which is again not possible on the event $J$.  It follows  that $V_*(\ell) \indicator_J=V_*^{\integer^d \setminus Q_L}(\ell)\indicator_{J}$.

\medskip

To define $I(0,{L})$ we need  the following  four events:
\begin{equation*}
\begin{array}{l}
I_h :=
 \left\{ 
\begin{array}{ll}
\hbox{ for all $|k| \geq {L}$ such that $ k^2 \neq 0$} &\hbox{either } \ Y_{(0,\ell)} < 2\max\{|k^2-\ell|,|\ell|\} \ \hbox{for all $\ell \in \mathbb{Z}^{d-m}$,} \\[1mm] 
 &\hbox{or } X_k < \max\{2|k^1|,|k^2|\}
\end{array}\right\} \\[5.5mm]
\II_h: = \{ X_k < 2|k^1| \text{for all } \ |k| \geq {L} \hbox{ such that  } k^2 = 0 \} \\[1.5mm]
I_v: = \left\{\begin{array}{ll}
\hbox{ for all $|k| \geq {L}$ such that  $k^1 \neq 0$} &\hbox{either $X_{(\ell,0)} < 2\max\{|k^1-\ell|,|\ell|\}$ for all $\ell \in \mathbb{Z}^{m}$,} \\[1mm] 
 &\hbox{or } Y_k < \max\{2|k^2|,|k^1|\}
 \end{array}\right\} \\[5.5mm]
\II_v: = \{ Y_k < 2|k^2| \ \text{ for all} \ |k| \geq {L} \hbox{ such that } k^1 = 0\}.
\end{array}
\end{equation*}
Note that $I_h$ and $I_v$ are independent since they are defined in terms of disjoint sets of $X_\cdot, Y_\cdot$, and, moreover, 
$ \II_v \subset I_h \ \hbox{ and } \ \II_h \subset I_v.$
\medskip

It follows that  $I: = I_h \cap \II_h \cap I_v \cap \II_v = I_h \cap I_v$ is the intersection of two independent events. We remark that, although they appear redundant,  the events $\II_h, \II_v$ will have a different meaning in the proof of conditional independence. 
\medskip

In the sequel we make use of the estimate 
\begin{equation}\label{eqn: I prob est}
 \P(I) \geq 1-C{L}^{-m};
\end{equation}
its proof is presented at the end of the ongoing one. 

\medskip

To prove conditional independence, as with the argument for $J$,  we use the localization $V_*^{Q_L}$ which is independent of $X_k,Y_k$ for $k \in \integer^d \setminus Q_L$ and aim to show that,
\begin{equation}
V_*(0) = V_*^{Q_L}(0) \ \hbox{ on the event $I$}.
\end{equation}
Then, for any $E \in \mathcal{F}_*(\{0\})$,  we can write $E\cap I = E' \cap I ,$  where $E'$ is an event in $\sigma(V_*^{Q_L}(0)),$ and, hence, $\P(E \cap F|I)=\P(E' \cap F|I)$ and  $\P(E|I)= \P(E'|I)$. 
\medskip

Moreover, it follows from \eqref{eqn: I prob est}  that,  for any measurable set $A \in \Omega$, 
  $$|\P(A | I) - \P(A)|  = \P(A \cap I^C)/\P(I) \leq CL^{-m}.$$
Then, for any $F \in \sigma(X_k,Y_k: \ k \in \integer^d \setminus Q_L)$,
\begin{align*}
 |\P(E \cap F) - \P(E) \P(F)| &\leq |\P(E \cap F|I) - \P(E|I) \P(F)| + |\P(E \cap F) -\P(E \cap F|I)| + |\P(E) -  \P(E|I)| \P(F) \\
 &\leq  |\P(E' \cap F|I) - \P(E'|I) \P(F)| + C{L}^{-m} \leq  C{L}^{-m}. 
 \end{align*}

We show next that, on the event $I_h \cap \II_h$, neither can the origin be on any horizontal segment centered at some $k \in \integer^d \setminus Q_L$ nor can a horizontal segment centered a point $k \in \integer^d \setminus Q_L$ mark any vertical segment containing $0$.  
\medskip

It follows that,  on the event $I_h\cap \II_h$,   $V_*(0)$ is independent of the $(X_k)_{k \in \integer^d \setminus Q_L}$.  A symmetrical argument gives that $V_*(0)$ is independent of the $(Y_k)_{k \in \integer^d \setminus Q_L}$ on the event $I_v\cap \II_v$. 
\medskip

 The event $\II_h$ guarantees that the horizontal segments centered at $k \in (\integer^d \setminus Q_L) \cap \{ k^2 = 0\}$ do not contain $0$.  The other possibility is that a horizontal line segment centered at some $|k| \geq L$ with $k^2 \neq 0$ marks a vertical segment centered at a point $(0,\ell)$ for an $\ell \in \integer^{d-m}$ which crosses both $(0,k^2)$ and $0$.  This is not possible on the event  $I_h$.  Indeed on  $I_h$ either (i) there is no such vertical segment crossing both $(0,k^2)$ and $0$,  since this would require $Y_{(0,\ell)} \geq 2 \max\{|k^2 - \ell|,|\ell|\}$), or (ii) $X_k < \max\{2 |k^1|,|k^2|\}$.  If (ii) holds, then either $X_k < 2|k^1|$ or $X_k < |k^2|$.  In the former  case, the horizontal segment centered at $k$ does not cross the vertical axis and, hence, cannot mark any point $(0,\ell)$. In the latter case, any vertical segment passing through both $(0,k^2)$ and $0$ must have length at least $|k^2|$ and so it is longer than $X_k$ and will not be marked by $X_k$.  

\medskip

Since \eqref{eqn: I prob est} and \eqref{eqn: J prob est} give 
 $$ \P(K) \geq 1-C((1+\lambda)^mR^{-m}-\lambda^{-d}),$$
we choose  $\lambda(R) := R^{\frac{m}{d+m}}$  to maximize the right hand side, and, hence, we find
 $$  \P(K) \geq 1- CR^{-\frac{dm}{d+m}}.$$

 
Let $\Lambda$ be finite a subset of $\integer^d$.  Next we prove \eqref{part: one} and observe that  \eqref{part: two} follows from \eqref{part: one} since, if $|\ell| \geq \diam(\Lambda)$, then 
$$\Lambda + \ell \subset (\Lambda + Q_R)^C \ \hbox{ with } \ R = |\ell| - \diam(\hat\Lambda).$$

Let $E \in \mathcal{F}(V_*(z): z \in \Lambda)$, $F \in \mathcal{F}(V_*(z): z\in (\Lambda+Q_R)^C)$ and define $\hat K := \cap_{\ell \in \Lambda} K(\ell,d(\ell,\Lambda^c)+R)$. It follows  from \eqref{eqn: I prob est} and \eqref{eqn: J prob est} that 
$$\P(\hat K) \geq 1- C\sum_{\ell\in \Lambda} \frac{1}{(d(\ell,\Lambda^C)+R)^{\frac{dm}{d+m}}}.$$
Then, by a calculation similar to the one in Lemma~\ref{lem: mixing prop} and using that $E$ and $F$ are independent conditional on the event $\hat K$, we find, for some  $C>0$,
$$ |\P(E \cap F) - \P(E)\P(F)| \leq C\sum_{\ell\in \Lambda} \frac{1}{(d(\ell,\Lambda^C)+R)^{\frac{dm}{d+m}}}.$$

\medskip

We conclude with the proof of \eqref{eqn: I prob est}.  The key step is a lower bound on the probability of $I_h$--note that  a very similar computation works for $I_v$:
\begin{align*}
\P(I_h) &= \prod_{|k| \geq {L}, k^2 \neq 0} \P( \hbox{either }Y_{(0,\ell)} < 2\max\{|k^2-\ell|,|\ell|\}  \hbox{ for all $\ell \in \mathbb{Z}^{d-m}$ or } X_k < \max\{2|k^1|,|k^2|\}) \\
&= \prod_{|k| \geq {L}, k^2 \neq 0} (1-\P(\hbox{$ \exists \ell \in \mathbb{Z}^{d-m}$ such that  $Y_{(0,\ell)} \geq 2\max\{|k^2-\ell|,|\ell|\} $ and  $X_k \geq \max\{2|k^1|,|k^2|$\})}) \\
& = \prod_{|k| \geq {L}, k^2 \neq 0} (1- \P(X_k \geq \max\{2|k^1|,|k^2|\})(1-\P( Y_{(0,\ell)} < 2\max\{|k^2-\ell|,|\ell|\text{for all $\ell \in \mathbb{Z}^{d-m}$ }\}))) \\
& = \prod_{|k| \geq {L}, k^2 \neq 0} (1- \P(X_k \geq \max\{2|k^1|,|k^2|\})(1- \prod_{\ell \in \mathbb{Z}^{d-m}} \P(Y_{(0,\ell)} < 2\max\{|k^2-\ell|,|\ell|\}))). \\
\end{align*}
Before continuing with the full computation, we estimate
\begin{align*}
\prod_{\ell \in \mathbb{Z}^{d-m}} \P(Y_{(0,\ell)} < 2\max\{|k^2-\ell|,|\ell|\}) &\geq \prod_{\ell \in \mathbb{Z}^{d-m}}(1-\frac{C}{(|\ell| + |k^2 - \ell|)^d}) 
 \geq \exp( - \sum_{\ell \in \mathbb{Z}^{d-m}}\frac{C}{(|\ell| + |k^2 - \ell|)^d}) \\
& \geq \exp (- \sum_{\ell \in \mathbb{Z}^{d-m}}\frac{C}{(|\ell| + |k^2|)^d}) 
 \geq 1- C|k^2|^{-m},
\end{align*}
where for the third inequality we  used that, since $\frac{1}{2}(|k^2|-|\ell|)\leq  |k^2 - \ell|$,  we have $|\ell| + |k^2 - \ell| \geq \frac{1}{2}(|k^2|+|\ell|)$.  
\smallskip

Using this information in  the estimate for $\P(I_h)$ we find
\begin{align*}
\P(I_h) &\geq \prod_{|k| \geq {L}, k^2 \neq 0}\left(1- \P(X_k \geq \max\{2|k^1|,|k^2|\})\frac{C}{|k^2|^m}\right) 
 \geq \prod_{|k| \geq {L}} \left(1- \frac{C}{(1+|k^2|)^m(|k^1|+|k^2|)^d}\right) \\
& \geq \exp(- \sum_{|k| \geq {L}, k \in \mathbb{Z}^d} \frac{C}{(1+|k^2|)^m(|k^1|+|k^2|)^d}) 
 \geq 1- C{L}^{-m};
\end{align*}
in  the last line we used that 
\begin{align*}
 \int_{\real^d \setminus B(0,{L})} \frac{1}{(1+|x^2|)^m(|x^1|+|x^2|)^d} \ dx &\lesssim   \int_{\real^d } \frac{1}{(1+|x^2|)^{m}({L}+|x^1|+|x^2|)^{d}} \ dx \\
 & = \int_{\real^{d-m}} \frac{1}{(1+|x^2|)^{m}}\int_{\real^m} \frac{1}{({L}+|x^2|+|x^1|)^{d}} \ dx^1dx^2 \\
 & \lesssim \int_{\real^{d-m}}\frac{1}{(1+|x^2|)^{m}}\frac{1}{({L}+|x^2|)^{d-m}} \ dx^2 
  \lesssim {L}^{-m}.
 \end{align*}
 A similar computation for $\P(I_v)$ yields
 $$  \P(I_v) \geq 1-C{L}^{-(d-m)}.$$
 Finally,  using that $m \leq d-m$ by assumption, we get 
 $$ \P(I) = \P(I_h \cap I_v) = \P(I_h)\P(I_v) \geq (1- C{L}^{-m})(1-C{L}^{-(d-m)}) \geq 1- C{L}^{- m}.$$
\end{proof}
 \subsection{Space-time random fields}\label{sec: space-time} We  discuss here a simple consequence of Proposition~\ref{prop: non-hom}.  If the Hamiltonian $h \in C^2( \real^d)$ has super-linear growth at infinity and there exists $\xi_0$ such  that $D^2h(\xi_0)$ is non-degenerate and has both positive and negative eigenvalues, then there exists a space-time stationary ergodic $V=V(x,t)$ for which  homogenization does not take place  for the Hamiltonian
$$H(\xi,x,t) = h(\xi) - V(x,t);$$
that is,  if   $u$ is the solution of
$$ u_t + H(Du,x,t) = 0 \ \hbox{ in } \ \real^d \times (0,\infty) \ \quad \ u(x,0) = \xi_0 \cdot x,$$
then
$$ \liminf_{t\to\infty} \frac{u(0,t)}{t} < \limsup \frac{u(0,t)}{t} \ \hbox{ almost surely.}$$
In what follows  we work for simplicity in $d=2$.  The  assumption on $h$ yields the existence of $\xi_0 \in \real^2$ such  that the eigenvalues of $D^2 h(\xi_0)$ have opposite sign.  Let $p_0 := Dh(\xi_0)$.  Although $h$ does not have a strict saddle point at $\xi_0$, since there is no reason that $p_0=0$, the Hamiltonian
$$ g(\xi) = h(\xi) - p_0 \cdot \xi \ \hbox{ has a strict saddle-point at $\xi_0$}$$
and is still coercive; note that this the reason we assumed that $h$ has superlinear growth.   It then follows from Lemma~\ref{prop: non-hom} that there is a probability space $(\Omega,\mathcal{F},\P)$, a group of measure preserving transformations $(\tau_{x})_{x\in\real^d}$ acting on $(\Omega,\mathcal{F},\P)$ which is ergodic, and a stationary random field $V_0 : \real^d \times \Omega  \to  \real$ (we hide the dependence on $\omega$ as usual) so that the solution $v$ of,
$$ v_t + g(Dv) - V_0(x) = 0 \ \hbox{ in } \ \real^2 \times (0,\infty) \ \quad \ v(x,0) =  \xi_0 \cdot x,$$
satisfies
$$ \liminf_{t \to \infty} \frac{v(0,t)}{t} < \limsup_{t \to \infty} \frac{v(0,t)}{t} \ \hbox{ almost surely in } \ \P.$$
Now observe that 
$ u(x,t) : = v(x - p_0 t, t),$
solves 
$$ u_t  + g(Du) + p_0\cdot Du -V_0(x-p_0t)=0 \ \text{in} \ \R^2\times (0,\infty) \quad \  u(x,0)=\xi_0\cdot x. $$
We need to justify that $V(x,t):=V_0(x-p_0t)$ is indeed space-time stationary and ergodic.  For this we extend the measure preserving system $(\Omega,\mathcal{F},\P,(\tau_{x})_{x\in\real^d})$ by defining
$ \sigma_{(x,t)}  := \tau_{x-p_0t}.$
It is easily checked that $\real^d \times \real$ acts on $(\Omega,\mathcal{F},\P)$ in a measure preserving way via the transformations $\sigma_{(x,t)}$.  Furthermore the group of transformations remains ergodic.
Indeed, if $E \in \mathcal{F}$ is invariant under all the translations $\sigma_{(x,t)}$, then, in particular, it is invariant under all $\sigma_{(x,0)} = \tau_x$ and thus, by the ergodicity of the original system, $\P(E) \in \{0,1\}$.  Finally we just need to check that $V(x,t,\omega)$ is stationary, a fact that follows from the identities 
$$V(x,t,\omega) = V_0(x-p_0 t,\omega) = V_0(0,\tau_{x-p_0t} \omega) = V(0,0,\sigma_{(x,t)}\omega).$$

\medskip

The properties of  $v$ imply 
\begin{equation}\label{eqn: ineq at p0}
 \liminf_{t \to \infty} \frac{u(p_0t,t)}{t} < \limsup_{t \to \infty} \frac{u(p_0t,t)}{t} \ \hbox{ almost surely in } \ \P. 
 \end{equation}
Although it is not so obvious, \eqref{eqn: ineq at p0} is equivalent to the analogous statement at $u(0,t)$.  This is a consequence of a well known fact , usually stated for the approximate corrector problem without time dependence; see \cite{ArmstrongSouganidis} Lemma 5.1.  

Define
$$ H_*(\xi_0) := \liminf_{t \to \infty} \frac{u(0,t)}{t} \ \hbox{ and } \ H^*(\xi_0) = \limsup_{t \to \infty} \frac{u(0,t)}{t} $$
The uniform Lipschitz continuity of $u$ yields that  $H_*$ and $H^*$ are translation invariant and therefore, by ergodicity, are almost surely constant.
\medskip

%
It is now  immediate consequence of \eqref{eqn: Rt} and \eqref{eqn: ineq at p0} that $H_*(\xi_0) < H^*(\xi_0)$.

\section{Quantitative Homogenization for Hamiltonians with Star-shaped Sub-level Sets}\label{sec: hamiltonian assumptions quant}
We identify a new class of nonconvex Hamiltonians, namely $H$'s with  ``quantitatively'' star-shaped sub-levels, such that the Hamilton-Jacobi equation \eqref{eqn: HJ1} homogenizes in random media with finite range dependence.

 \medskip
 
Our arguments extend to study the limit, as $\e \to 0$, of the solutions to the ``viscous'' Hamilton-Jacobi equation,
\begin{equation}\label{eqn: viscous HJ2}
u_t^\e - \e \Delta u + H(Du^\e,\tfrac{x}{\e})= 0 \  \hbox{ in } \  \real^d \times (0,\infty)  \qquad
u(x,0) = u_0(x),
\end{equation}
always in a medium with finite range dependence, with Hamiltonians satisfying  a crossover of quantitative  star-shapedness and positive homogeneity. The analysis applies even to more complicated degenerate quasilinear versions of \eqref{eqn: viscous HJ2}. We do not state, however, any specific results.

\medskip

The goal of this section is to point out that the strong star-shapedness assumption \eqref{eqn: intro star-shapedness} or, more precisely, a quantification of it (see  \eqref{eqn: assumption strongest} below), and not homogeneity is in fact the key assumption which allows the arguments of \cite{ArmstrongCardaliaguet} to be carried out for first-order problems.  

\medskip

As already discussed in the introduction, the analysis relies heavily on the arguments in \cite{ArmstrongCardaliaguet}.  Instead of rewriting large portions of \cite{ArmstrongCardaliaguet}, here we point out the critical places where the star-shaped sub-level set property needs to be used and, in these cases,  we present full proofs. 

\subsection{Outline}\label{sec: hom outline}  Before getting in the technical details we give a broad outline of the proof of quantitative homogenization from \cite{ArmstrongCardaliaguet}, which had its origin in the article \cite{ArmstrongCardaliaguetSouganidis}.

\medskip

As was previously established in \cite{ArmstrongSouganidis2,ArmstrongSouganidis}, an efficient way to  study the homogenization of \eqref{eqn: HJ1} is to look at the asymptotic behavior of  the solutions to the so-called ``metric problem" to a closed set $S \subset \real^d$. The latter is to seek, for each $\mu >0$,  a positive solution $m_\mu$ to  
\begin{equation}\label{eqn: metric to S 0}
 H(Dm_\mu,x) = \mu \ \hbox{ in } \ \real^d \setminus S \ \hbox{ with } \ m_\mu(x) \leq 0 \hbox{ on } S.
 \end{equation}
In order to make the connection between the metric  and the original homogenization problems more transparent,  we consider here an eikonal or level-set evolution type Hamiltonian of the form
\[ H(\xi,x) =  a(\tfrac{\xi}{|\xi|},x)|\xi|.\]
This choice of  $H$ may  seem to be restrictive, but, in fact, the  metric problems for star-shaped sub-level set Hamiltonians are equivalent to metric problems for level-set evolutions; see subection~\ref{sec: reduction} for more details.

\medskip

In view of the positive homogeneity of the Hamiltonian  we are using here, it turns out that it is enough to consider the 
$\mu=1$ metric problem, and to simplify the presentation, next we write $m$ instead of $m_1$.

\medskip

An intuitive way to understand the role of $m$, is to think of $m(x)$ as being  exactly the arrival time at the location $x$ of a front starting at $S$ and moving with outward normal velocity $a(n,\cdot)$.  This can be seen by looking at $u(x,t) = m(x) - t$ which evidently solves
\[ u_t + H(Du,x) = 0 \ \hbox{ with } \ \partial \{ u(x,0) = 0\} = \partial S.\]
Here we are, in particular, interested in the planar metric problem, that is \eqref{eqn: metric to S 0} with
a half plane target set $S$, that is 
\[S=\mathcal{H}_e^\pm: = \{ x \in \real^d : \pm x \cdot e \geq 0 \} \ \hbox{ for some } \ e \in S^{d-1}.\]
In this case the $t-$ level-set of the metric problem solution $m$ is the  location at time $t$ of a front starting  from the hyperplane $\{y\in\R^d: y \cdot e = 0\}$ and moving with normal velocity $a(n,\cdot)$.  
\medskip

For the purposes of homogenization it is important to understand   the long time behavior of this front.  In particular, if we can find a non-random asymptotic speed $\overline{a}(e)$ for every direction $e$, then the  homogenized Hamiltonian is $\overline{H}(\xi) = \overline{a}(\frac{\xi}{|\xi|})|\xi|$.

 \medskip
 
The uniqueness property for the metric problem, which is guaranteed by the star-shapedness condition, comes into the proof of quantitative homogenization in two important places which we discuss next in this hand waving outline. 
 
 \medskip
 
The nice feature of geometric (level-set) evolutions is that they inherently localize the spatial dependence of the solution on the coefficients of the problem.  This is an extremely useful feature in quantitative random homogenization, where one is always studying this question of how the solution depends on the coefficients of the equation.  
\medskip

The localization property of the geometric evolutions, stated informally, is that  the location of the front at time $t$ depends only on the region traced out by the front on the interval $(0,t)$ or in other words,
 \[ \hbox{$m(x)$ depends only on the values of $H$ for $x \in \{ y: m(y) \leq m(x)\}$.} \]
This is an immediate consequence of the uniqueness of the level-set evolution; see Lemma~\ref{lem: localization in sublevels}, and  a critical place where the comparison principle  for the metric problem comes into the proof of quantitative homogenization.  
 
 \medskip
 
 The property of the previous paragraph, which is referred to as localization in sub-level sets, is the key tool in obtaining a martingale decomposition of $m(x)$ -- this idea goes back to the work of Kesten \cite{Kesten} on first passage percolation.  
 \medskip
 
 Loosely speaking,  let  $\mathcal{F}_t $ be the minimal filtration  of the probability space which makes the set valued random variables $\{x: m(x) \leq t\}$ to be $\F_t$-measurable for all $t >0$. The existence of $\mathcal{F}_t $  is, of course, a bit tricky are considering continuous and not discrete in time  setting. 
\medskip  
  
The martingale we are interested in is 
 \[ \E[m(x)|\F_t] - \E[m(x)].\]
The lower bound on the front propagation speed implied by the coercivity yields  that, if $t \gtrsim x \cdot e$, then   almost surely $x \in \{y\in \R^d:m(y) \leq t\}$.  It then follows from the localization in sub-level sets property that  $m(x) \in \mathcal{F}_t$ and, hence, 
  \[ m(x) - \E[m(x)] =  \E[m(x)|\F_t] - \E[m(x)] = \sum_{k=0}^{[t]} \E[m(x)|\F_{k+1}] - \E[m(x)|\F_{k}], \]
  which is a sum of (bounded) martingale differences.  
  \medskip
  
  Then a classical concentration estimate, known as Azuma's inequality, gives the following estimate on the probability that $m(x)$ deviates too much from its mean $\E m(x)$: 
  \begin{equation}\label{eqn: fluctuations est 0}
   \P(|m(x) - \E m(x) | \geq \lambda (x \cdot e)^{1/2}) \leq C \exp(-c\lambda^2).
   \end{equation}
 Having stablished an estimate on the concentration of $m(te)$ about its mean $\E m(te)$,  we are left  with the task to understand the limiting behavior of $\E m(te)$ as $x \cdot e \to +\infty$.

\medskip

 It turns out (see Proposition~\ref{prop: bias estimate}) that $\E m(te)$ is  approximately linear in $t$ as $t \to \infty$, a fact that is expressed in the estimate
 \[ |\E m((t+s)e) - \E m(te)-\E m(se)| \leq Ct^{1/2} \log^{1/2}(1+t) \ \hbox{ for } \ 1\leq s \leq t.\]
 This approximate linearity comes from the concentration estimate combined with a semi-group (uniqueness again) property of $m$.  The idea of the proof is as follows. The concentration about the mean established in \eqref{eqn: fluctuations est 0} implies that,  when $\lambda$ is large,  $|m(te) - \E m(te)| \leq \lambda t^{1/2}$ with high probability.  In fact, using the stationarity, continuity and a union bound,  is not hard to show  that, except for an event of very small probability, $m(x+te) - \E m(x+te)$ is not too much larger than $t^{1/2}$ for any $x \in B_{Rt}(0) \cap \{ y \in \R^d: y \cdot e = 0\}$ for large $R$.  In other words, $m$ looks, with reference to the scale $t$, like the constant $\E m(te)$ on $(\partial \mathcal{H}_e^++te) \cap B_{Rt}(te)$.  Here is where the semi-group/quantitative uniqueness property comes in again, since we can now compare $m$ with the metric problem solution in $\mathcal{H}_e^++te$ with boundary data $\E m(te)$ on $\partial(\mathcal{H}_e^++te)$. Then the fluctuations estimate give that, again with high probability, $m$  will be close to $\E m(se)+\E m(te)$ at $(t+s)e$.  This is the second important way that uniqueness comes into the proof of quantitative homogenization.

 \medskip
 
 Although we have been trying to emphasize in this outline the important role played by the uniqueness property of the metric problem, we conclude our summary by explaining why uniqueness could perhaps be not as important as it seems.  If there are indeed multiple (relevant) solutions of the metric problem, it is conceivably possible that there could be some way of keeping track of each solution separately, for example, by a new metric type problem now with uniqueness.  In other words given a particular solution of the metric problem one could attempt to find, varying $\mu$, a unique continuation of that branch of solutions.  In a sense this is the approach carried out by \cite{ATY} for a particular class of equations.

\subsection{The assumptions on the Hamiltonian and the random field and some extensions} \label{sec: assumptions}
Aiming to avoid unnecessary complications  we make the following two assumptions, the first being a normalization and the second a simplification:
 \begin{equation}\label{eqn: assumption min H}
 \esssup H(0,0) = 0 \ \hbox{ and } \ \P( H(0,0) = 0) = c_0 >0. 
 \end{equation}
Given the normalization $\esssup H(0,x) =0$, it is shown in \cite{ArmstrongCardaliaguetSouganidis} that $\P(H(0,0) = 0)$ controls the lower deviations of the approximate correctors $\P(-\delta v^\delta(0,\xi) \ll -\delta)$ for all $\xi$. As a consequence,  the homogenized Hamiltonian, if it exists, satisfies $\min\overline{H} \geq 0$.  This result depends only on coercivity and no additional structural properties of the Hamiltonian.

\medskip
 
Next we make precise our assumption on the quantitative star-shapedness of the $\mu>0$ sub-level sets $\{ \xi: H(\xi,x) \leq \mu\}$.   We assume that there exists a modulus $\omega: [0,\infty) \to [0,\infty)$, which is positive for $\mu >0$ such that, for all $(\xi,x) \in \real^d \setminus \{0\} \times\real^d,$ 
 \begin{equation}\label{eqn: assumption strongest}
 \tfrac{\xi}{|\xi|} \cdot D_\xi H(\xi,x) > \omega(H(\xi,x)\vee0).
 \end{equation}
Note that \eqref{eqn: assumption strongest} indeed implies that the $\mu$ positive sub-level sets of $H$ are strictly star-shaped.  

\medskip

We also need the standard coercivity, growth and continuity bounds on the Hamiltonian, that is we assume that there are exist $0<c_0,C_0 < \infty$ and $p \geq q \geq 1$ such that, for all $\xi, x \in \R^d$,
  \begin{equation}\label{eqn: assumption bounds}
  c_0 |\xi|^q - C_0 \leq H(\xi,x) \leq C_0 |\xi|^p+C_0,
 \end{equation}
and 
 \begin{equation}\label{eqn: assumption cont}
 |D_xH(\xi,x)| + (1\vee|\xi|)|D_\xi H(\xi,x)| \leq C_0 (1 \vee |\xi|^p).
 \end{equation}

\medskip

When working with \eqref{eqn: HJ1}, to simplify statements and shorten writing we combine the above assumption on $H$ in 
\begin{equation}\label{takis100}
H \ \text{ satisfies \ \eqref{eqn: assumption strongest},  \eqref{eqn: assumption bounds},  \eqref{eqn: assumption cont} and \eqref{eqn: assumption min H} and we call } \ \textup{data} = (d,c_0,C_0,q,p).
\end{equation}
We  work on the probability space $(\Omega,  \mathcal{F}, \P)$.  Here  $\Omega$ is the set of all fields of Hamiltonians satisfying the above assumptions, that is  
 \begin{equation}\label{takis101}
 \Omega := \{ H : \real^d \times \real^d \to \real : H \ \hbox{ satisfies  \eqref{takis100}}\}.
 \end{equation}
For each $U \subseteq \real^d$ Borel we define the cylinder $\sigma$-algebra $\mathcal{F}(U)$
 \begin{equation*}
 \mathcal{F}(U) := \sigma( H \mapsto H(\xi,x) : x \in U, \ \xi \in \real^d).
 \end{equation*}
Then the $\sigma$-algebra $\mathcal{F}$ is taken to be the largest of the $\mathcal{F}(U)$,
\begin{equation}\label{takis102}
\mathcal{F} := \mathcal{F}(\real^d).
\end{equation} 
The set $\Omega$ is endowed with the group of translations $(T_y)_{y \in \R^d}$ given, for each $y \in \real^d$,  by
\[T_yH(\cdot,\cdot): = H(\cdot,\cdot + y).\]
We assume that the probability measure $\P$, which remains fixed throughout  this section, is stationary and has unit range of dependence, that is, respectively,
\begin{equation}\label{takis103}
\text{ the map $T_y$ is $\P$-preserving for every $y \in \real^d,$}
\end{equation}
and 
\begin{equation}\label{takis104}
\begin{cases}
\text{ the $\sigma$-algebras $\mathcal{F}(U)$ and $\mathcal{F}(V)$ are}\\[1mm]
\text{ $\P$-independent  for every pair of Borel sets $U,V$ in $\R^d$, with $d(U,V) \geq 1$.}
\end{cases}
\end{equation}
We summarize the properties in 
\begin{equation}\label{takis105}
H \mapsto H(\cdot,x) \ \text{ is a stationary and $1$-dependent random field of Hamiltonians on $\real^d$.}
\end{equation}

We discuss briefly  more general types of Hamiltonians where these methods could be applied.  The Hamiltonian 
 $$ H(\xi,x) = |\xi|^2-b(x) \cdot \xi,$$
has star-shaped level sets of $H$ in a uniform way, but the center of the star-shapedness is varying in $x$.  Indeed,
 $$H(\xi,x) =  |\xi -b(x)|^2 - \frac{1}{4}|b(x)|^2,$$
 and it is clear that the sub-level sets of $H(\cdot,x)$ are star-shaped with respect to $b(x)$.  
 \medskip
 
 If $b(x) = D B(x)$, where $B$ is a bounded $C^1-$ vector field,
 the transformation to,
 $$v(x) = u(x) - B(x),$$
yields that,  if $u$ solves $H(Du,x) = \mu$ in $U\subset R^d$,   then $v$ solves 
 $$ |Dv|^2 - \frac{1}{4}|b(x)|^2 = \mu,$$
which is an equation satisfying the assumptions of the previous section.  
\medskip

The observation above shows that we can work with Hamiltonians $H$, such that, that there exists  a bounded stationary random potential field $B \in C^{1}(\real^d;\R^d)$ with finite range of dependence such that  \eqref{takis100} is satisfied by the Hamiltonian 
 $$G(Du,x) := H(Du + DB,x).$$
 
 \medskip
 
 In a different direction,  the individual sub-level sets of $H(\cdot,x)$ for fixed $x$ can be uniformly star-shaped with respect to different points.  Indeed we can assume that, for every $\mu>0$, there exists $\xi_\mu$ such that, for every $x\in \real^d$, $\{\xi: H(\xi,x)  \leq \mu\}$ is star-shaped with respect to $\xi_\mu$ and
  \begin{equation}\label{eqn: assumption strongest general}
 \tfrac{\xi-\xi_\mu}{|\xi-\xi_\mu|} \cdot D_\xi H(\xi,x) \geq \omega(\mu) \ \hbox{ for all } \ x\in \real^d, \ \xi \in \{ H(\xi,x) = \mu\}.
 \end{equation}
 Quite similar estimates to those we derive below will hold in this setting as well.  
 
 \medskip
 
 Several other generalizations are evident, including a combination of the two we have mentioned.  
\medskip
 
Another possibility, out of reach of our current methods, is to consider Hamiltonians with the property  that every connected component of every level set of $H$ is the boundary of a star-shaped set.  Formally one can separate out each connected component of each level set getting an eikonal type equation for each, but the connection with the original equation Lemma~\ref{lem: eikonal eqn} is lost.

 \medskip
 
 We conclude the discussion about the general assumptions commenting that the main property  needed to study the ``viscous'' Hamilton-Jacobi problem \eqref{eqn: viscous HJ2} turns out to be something between star-shaped sub-levels and homogeneity.  
 
 \medskip
 Indeed the conditions is that, for every $\mu>0$, there exists $\xi_\mu \in \R^d$ such that, for every $x\in \real^d$, the level set $\{\xi \in \R^d: H(\xi,x)  \leq \mu\}$ is star-shaped with respect to $\xi_\mu$ and,  
 \begin{equation}\label{eqn: assumption viscous ss}
 \tfrac{\xi - \xi_\mu}{|\xi-\xi_\mu|} \cdot D_\xi H(\xi,x) \geq \mu \vee 0 \ \hbox{ for all } \ x\in \real^d \  \text{and} \  \xi \in \{\xi'\in \R^d: H(\xi',x) = \mu\}.
 \end{equation}
Hamiltonians which are $p$-homogeneous $H$ with  $p \geq 1$ satisfy  \eqref{eqn: assumption viscous ss}. However,  \eqref{eqn: assumption viscous ss} allows for more general $H$'s like, for example, logarithmic terms $H(\xi) = |\xi| \log(1+|\xi|)$ or direction dependent homogeneity as, for example,  $H(\xi) = |\xi|^{p(\frac{\xi}{|\xi|})}$ with $p :S^{d-1} \to [1,\infty)$.

 \subsection{Reduction to an eikonal/level-set equation and the radius function}\label{sec: reduction}
In subsection~\ref{sec: hom outline} we explained in a heuristic way the homogenization proof for level-set type Hamiltonians.  Here we show that metric-type problems for sub-level star-shaped Hamiltonians can be transformed into metric-type problems for a $1$-homogeneous level-set type Hamiltonian.  As a result  the methods explained in Section~\ref{sec: hom outline} can also be applied to the class of sub-level star-shaped Hamiltonians.

 \medskip
 
Assume that \eqref{eqn: intro star-shapedness} holds, that is, for  every $\mu>0$, the  sub-level $\{ H(\xi,x) \leq \mu\}$ is strictly star-shaped with respect to $0$.  Then  there exists a ``radius''-function  $r_\mu:S^{d-1}\times \R^d \to (0,\infty)$ such that,
 \begin{equation}\label{eqn: rmu} 
 H(r \hat\xi,x) < \mu \ \hbox{ for } \ r < r_\mu(\hat\xi) \ \hbox{ and } \ H(r \hat\xi,x) > \mu \ \hbox{ for } \ r > r_\mu(\hat\xi).
 \end{equation}
It follows that the metric problem for $H$ is equivalent to the one for $\tilde H(\xi, x):=r_\mu(\hat\xi, x)^{-1}|\xi|.$
\begin{lem}\label{lem: eikonal eqn}
Let $U$ be a domain of $\real^d$ and assume that \eqref{eqn: intro star-shapedness} holds. Then 
\[r_\mu(\tfrac{Du}{|Du|},x)^{-1}|Du| = 1 \ \hbox{ in } \ U \ \hbox{ if and only if } \ H(Du,x) = \mu \ \hbox{ in } \ U.\]
\end{lem}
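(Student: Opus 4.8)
The plan is to show that $u$ solves the eikonal equation $r_\mu(\widehat{Du},x)^{-1}|Du|=1$ in $U$ if and only if it solves $H(Du,x)=\mu$ in $U$, by verifying directly that the two Hamiltonians $\tilde H(\xi,x):=r_\mu(\hat\xi,x)^{-1}|\xi|$ and the modified Hamiltonian $H(\xi,x)-\mu$ have the \emph{same $0$-sublevel and $0$-superlevel sets at every fixed $x$}, i.e.\ $\{\xi: \tilde H(\xi,x)\le 1\}=\{\xi: H(\xi,x)\le \mu\}$ and likewise with strict inequalities, $\le$ replaced by $<$, and the boundaries coincide. This is purely a pointwise statement about $\xi$ and follows immediately from the defining property \eqref{eqn: rmu} of $r_\mu$: for $\xi=r\hat\xi$ with $r<r_\mu(\hat\xi,x)$ we have both $H(\xi,x)<\mu$ and $\tilde H(\xi,x)=r/r_\mu(\hat\xi,x)<1$, for $r>r_\mu(\hat\xi,x)$ both are $>$ their thresholds, and on $\{r=r_\mu(\hat\xi,x)\}$ we get $\tilde H(\xi,x)=1$ while $H(\xi,x)=\mu$ by continuity of $H$ (which forces $H(r_\mu(\hat\xi,x)\hat\xi,x)=\mu$ since $H(r\hat\xi,x)$ crosses from below $\mu$ to above $\mu$ at $r=r_\mu$). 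Note $\xi=0$ is handled separately: since $\esssup H(0,\cdot)=0<\mu$, the origin lies in the open sublevel of both Hamiltonians.

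Once the level sets agree, I would invoke the standard fact from viscosity solution theory that a Hamilton--Jacobi equation $F(Du,x)=0$ depends, for the notion of viscosity sub/supersolution, only on the sub- and superlevel structure $\{F(\cdot,x)\le 0\}$, $\{F(\cdot,x)\ge 0\}$ of the Hamiltonian at each point --- more precisely, if $F_1$ and $F_2$ are continuous and satisfy $\{F_1(\cdot,x)\le 0\}=\{F_2(\cdot,x)\le 0\}$ and $\{F_1(\cdot,x)\ge 0\}=\{F_2(\cdot,x)\ge 0\}$ for all $x$, then $u$ is a viscosity solution of $F_1(Du,x)=0$ iff it is a viscosity solution of $F_2(Du,x)=0$. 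Applying this with $F_1=\tilde H-1$ and $F_2=H-\mu$ gives the lemma. Alternatively, and perhaps cleaner, one checks directly at a test function: if $\varphi$ touches $u$ from above at $x_0$, then $H(D\varphi(x_0),x_0)\le \mu$ is equivalent to $D\varphi(x_0)$ lying in the sublevel $\{H(\cdot,x_0)\le\mu\}=\{\tilde H(\cdot,x_0)\le 1\}$, i.e.\ $\tilde H(D\varphi(x_0),x_0)\le 1$; the supersolution case is symmetric.

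The main obstacle, such as it is, is making sure $\tilde H$ is continuous enough for the viscosity-solution equivalence to be legitimate --- i.e.\ that $r_\mu$ is continuous in $(\hat\xi,x)$ and bounded away from $0$ and $\infty$ on $S^{d-1}\times U$. Boundedness follows from the coercivity/growth bounds \eqref{eqn: assumption bounds}: these force $r_\mu(\hat\xi,x)$ to lie in a fixed compact subinterval of $(0,\infty)$ depending only on $\mu$ and data. Continuity of $r_\mu$ follows from the strict monotonicity encoded in the quantitative star-shapedness \eqref{eqn: assumption strongest} (or \eqref{eqn: rmu} together with continuity of $H$): $r\mapsto H(r\hat\xi,x)$ is strictly increasing through the value $\mu$, so $r_\mu$ is the unique solution of $H(r\hat\xi,x)=\mu$ and depends continuously on parameters by the implicit function theorem or a direct monotonicity argument. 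With $\tilde H$ continuous and positively $1$-homogeneous in $\xi$, the level-set equivalence argument goes through verbatim, and one should also record that a test-function contact $D\varphi(x_0)=0$ is vacuous on both sides since $0$ is in the interior of both sublevels and neither equation is satisfied there (consistent with $u$ not being flat where the equation holds). This completes the proof.
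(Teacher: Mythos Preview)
Your proposal is correct and follows essentially the same approach as the paper: the paper's proof is exactly your ``alternatively'' route, checking directly that for $\xi\in D^+u(x)$ the inequality $r_\mu(\hat\xi,x)^{-1}|\xi|\le 1$ is equivalent, via the defining property of $r_\mu$, to $H(\xi,x)\le\mu$, and similarly for the supersolution side. Your level-set formulation is just a repackaging of the same observation, and you supply a few details (continuity of $r_\mu$, the case $\xi=0$) that the paper leaves implicit.
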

The proof is an almost immediate consequence of the definition of viscosity solutions.
\begin{proof}
If for some $x\in U$, $\xi\in D^+u(x,t)$,  the sub-solution property, yields
\[r_\mu(\hat\xi,x)^{-1}|\xi| \leq 1,\]
and the definition of $r_\mu$ in \eqref{eqn: rmu} gives, 
\[H(\xi,x) \leq \mu.\]
that is $u$ is a sub-solution of $H(Du,x) \leq \mu$ in $U$.  A similar argument works for the super-solution property as well as the other direction in the claim,
\end{proof}

Given  \eqref{takis100}, we show that the radius function $r_\mu$ satisfies all the properties needed for homogenizing the level-set equation $|Dm_\mu| = r_\mu(\frac{Dm_\mu}{|Dm_\mu|},x)$. These are  upper and lower bounds, continuity in $e$, and monotonicity and continuity in $\mu$.

\medskip

\begin{lem}\label{lem: r lip} 
(i)~There exist constants $C,c>0$ such that, for all  $e \in S^{d-1}$ and $x\in\R^d$,  
$$0< a_\mu< \mu(e,x) < A_\mu,$$
with  $a_\mu = c (\mu^{1/p} \wedge \mu)$ and  $A_\mu = C(\mu + C)^{1/q}$.

\medskip

(ii)~For every $\mu>0$, $e \in S^{d-1}$ and $x \in \real^d$, 
\[ C_0^{-1}A_\mu^{1-p}\leq \frac{d}{d\mu}r_\mu(e,x) \leq \frac{1}{\omega(\mu)}.\]
Furthermore $e \to r_\mu(e,x)$ is Lipschitz continuous on $S^{d-1}$ with Lipschitz constant $L_\mu: =  C_0a_\mu^{-1}\omega(\mu)^{-1}A_\mu^p.$
\end{lem}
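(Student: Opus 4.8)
The plan is to read off all three estimates from the defining relation $H(r_\mu(e,x)e,x)=\mu$ together with the structural hypotheses \eqref{eqn: assumption strongest}, \eqref{eqn: assumption bounds}, \eqref{eqn: assumption cont}. For part (i), the upper and lower bounds on $r_\mu$ are a direct inversion of the coercivity/growth sandwich \eqref{eqn: assumption bounds}. Evaluating that inequality at $\xi=r_\mu(e,x)e$ and using $H(r_\mu e,x)=\mu$ gives $c_0 r_\mu^q - C_0 \le \mu \le C_0 r_\mu^p + C_0$; solving the left inequality for $r_\mu$ yields the upper bound $r_\mu \le ((\mu+C_0)/c_0)^{1/q} =: A_\mu$, and solving the right inequality for $r_\mu$ yields the lower bound $r_\mu \ge (\mu/C_0 - 1)^{1/p}$ when $\mu$ is large and, using the normalization \eqref{eqn: assumption min H} that $H(0,0)=0$ plus continuity, a linear-in-$\mu$ bound $r_\mu \gtrsim \mu$ for small $\mu$; combining these gives $a_\mu = c(\mu^{1/p}\wedge\mu)$. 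Here one should be slightly careful that $a_\mu$ is strictly positive — this uses that the modulus $\omega$ is positive for $\mu>0$, so the sub-level set $\{H(\cdot,x)\le\mu\}$ strictly contains $0$ in its interior for every $\mu>0$.

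For part (ii), the key is the implicit function theorem applied to $g(r):=H(re,x)-\mu$. By \eqref{eqn: assumption strongest} we have $\partial_r H(re,x) = e\cdot D_\xi H(re,x) = \tfrac{re}{|re|}\cdot D_\xi H(re,x) > \omega(H(re,x)\vee 0)>0$ for $r>0$, so $g$ is strictly increasing and $r_\mu(e,x)$ is the unique positive root; moreover $\partial_r H \ne 0$ there, so $r_\mu$ is differentiable in $\mu$ with
\[
\frac{d}{d\mu} r_\mu(e,x) = \frac{1}{\partial_r H(r_\mu e, x)} = \frac{1}{e\cdot D_\xi H(r_\mu e,x)}.
\]
The upper bound $\tfrac{d}{d\mu}r_\mu \le \omega(\mu)^{-1}$ is then immediate from \eqref{eqn: assumption strongest} evaluated at $\xi = r_\mu e$ (where $H=\mu$). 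For the lower bound, \eqref{eqn: assumption cont} gives $e\cdot D_\xi H(r_\mu e,x) \le |D_\xi H(r_\mu e,x)| \le C_0(1\vee |r_\mu e|^p)/(1\vee r_\mu) \le C_0 A_\mu^{p-1}$ (using $r_\mu \le A_\mu$ and handling the $r_\mu\le 1$ case separately), hence $\tfrac{d}{d\mu}r_\mu \ge C_0^{-1}A_\mu^{1-p}$. For the Lipschitz continuity of $e\mapsto r_\mu(e,x)$ on $S^{d-1}$: differentiate $H(r_\mu(e,x)e,x)=\mu$ tangentially in $e$, or more robustly compare two directions $e,e'$. Writing $\xi = r_\mu(e,x)e$, $\xi' = r_\mu(e',x)e'$, both on the level set $\{H(\cdot,x)=\mu\}$, one estimates $|r_\mu(e,x)-r_\mu(e',x)|$ by moving along the level surface; the normal derivative of $H$ is bounded below by $\omega(\mu)$ (from \eqref{eqn: assumption strongest}) and the full gradient $|D_\xi H|$ is bounded above by $C_0 A_\mu^{p}/a_\mu$-type quantities (from \eqref{eqn: assumption cont} with $|\xi|\approx r_\mu \in [a_\mu,A_\mu]$), which together with $|\xi|\ge a_\mu$ yields the stated constant $L_\mu = C_0 a_\mu^{-1}\omega(\mu)^{-1}A_\mu^p$ after tracking how an angular displacement of size $|e-e'|$ translates to a spatial displacement of size $\lesssim A_\mu|e-e'|$ and then to a change in $r_\mu$.

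The main obstacle I anticipate is not any single estimate but the bookkeeping of constants: the small-$\mu$ versus large-$\mu$ regimes behave differently (the $\mu^{1/p}\wedge\mu$ in $a_\mu$, and the $(1\vee|\xi|)$ factors in \eqref{eqn: assumption cont}), so each inequality must be checked in both regimes and the worst case retained. A secondary technical point is that $H$ is only assumed $C^1$ in $\xi$ (via \eqref{eqn: assumption cont}), so the implicit function theorem and the tangential-differentiation argument for the $e$-Lipschitz bound must be justified at that regularity — this is fine since $\partial_r H > 0$ is a strict, uniform inequality, but it means the Lipschitz estimate in $e$ should be carried out by a comparison/mean-value argument along the level set rather than by differentiating a formula. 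Everything else is a routine inversion of \eqref{eqn: assumption bounds}--\eqref{eqn: assumption cont}.
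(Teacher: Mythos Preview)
Your proposal is correct and follows essentially the same approach as the paper. The only cosmetic differences are that the paper carries out part (ii) via finite differences (comparing $r_\mu$ and $r_\nu$ directly and then sending $\nu\to\mu$) rather than invoking the implicit function theorem, and for the $e$-Lipschitz estimate the paper makes the comparison explicit by introducing a scaling factor $\lambda>1$ solving $\lambda=1+2C_0\omega(\mu)^{-1}\lambda^{p-1}r_\mu(e,x)^{p-1}|e-e'|$ and showing $H(\lambda r_\mu(e,x)e',x)\ge\mu$; your tangential-differentiation/level-set sketch is the infinitesimal version of the same computation. For the small-$\mu$ lower bound in part (i), the paper integrates the gradient bound \eqref{eqn: assumption cont} from $0$ along the ray (using $H(0,x)\le 0$ from \eqref{eqn: assumption min H}) to get $\mu\le C_0(r_\mu+p^{-1}r_\mu^p)$ in one stroke, which is exactly what your ``normalization plus continuity'' remark amounts to once made precise.
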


\begin{proof} It follows from  \eqref{eqn: assumption bounds} that 
$$ \mu= H(r_\mu(e,x)e,x) \geq c_0 r_\mu(e,x)^{q}-C_0, $$
while \eqref{eqn: assumption bounds}, \eqref{eqn: assumption cont} and  \eqref{eqn: assumption min H} give 
$$ \mu = H(r_\mu(e,x)e,x) \leq C_0(r_\mu(e,x)+p^{-1}r_\mu(e,x)^p).$$
Then
\[ c(\mu^{1/p} \wedge \mu) \leq r_\mu(e,x) \leq C(\mu +C)^{1/q}\]
and the claim follows with  $A_\mu = C(\mu + C)^{1/q}$ and $a_\mu = c (\mu^{1/p} \wedge \mu)$.

\medskip

Next we examine the dependence of $r_\mu$ on $\mu$.  The Lipschitz estimate in \eqref{eqn: assumption bounds} gives 
\begin{equation}\label{eqn: monotonicity 1}
\mu = H(r_\mu(e,x)e,x) \leq H(r_\nu(e,x)e,x) + C_0A_\mu^{p-1}(r_\mu(e,x)-r_\nu(e,x)),
\end{equation}
and, using \eqref{eqn: assumption strongest},  for any $\mu > \nu >0$, we obtain
$$ \mu = H(r_\mu(e,x)e,x) \geq H(r_\nu(e,x)e,x) +\omega(\mu)\wedge\omega(\nu)(r_\mu(e,x)-r_\nu(e,x)).$$
Rearranging the above inequalities,  using that $H(r_\nu(e,x)e,x) = \nu$ and combining with \eqref{eqn: monotonicity 1} yields the continuity estimate
\begin{equation}\label{eqn: r mu cont}
C_0^{-1}A_\mu^{1-p}(\mu - \nu) \leq (r_\mu(e,x)-r_\nu(e,x)) \leq \frac{1}{\omega(\mu)\wedge\omega(\nu)}(\mu - \nu).
\end{equation}
Sending $\nu \to \mu$ and/or $\mu \to \nu$ gives the result.
 
\medskip
 
Finally, we consider the Lipschitz estimate of $r_\mu(\cdot,x)$ on the unit sphere.  Fix $e \in S^{d-1}$ and  $\delta >0$, let $e' \in S^{d-1}$ be such that $|e' - e| = \delta$, define
 $\lambda$ by solving the relation 
 \[\lambda =1+2C_0\omega(\mu)^{-1}\lambda^{p-1} r_\mu(e,x)^{p-1}\delta,\]
  and note that $\lambda \to 1$ as $\delta \to 0$.  In particular we can take $\delta$ sufficiently small so that $2>\lambda >1$.  
 
 \medskip
 
 The continuity of  $\omega$ and $H$ yield that, for $\delta$ small and $\lambda \approx 1$, $\omega(H(\lambda r_\mu(e,x)e,x))\geq  \tfrac{1}{2} \omega(\mu)$ and, hence, in view of  \eqref{eqn: assumption strongest}, 
\[ H(\lambda r_\mu(e,x)e,x) \geq  H(r_\mu(e,x)e,x)+(\lambda-1) \omega(H(\lambda r_\mu(e,x)e,x)) \wedge \omega(\mu) = \mu + (\lambda-1) \tfrac{1}{2} \omega(\mu),\]
and, in view of \eqref{eqn: assumption cont},
\[ H(\lambda r_\mu(e,x)e',x) \geq \mu + (\lambda-1) \tfrac{1}{2} \omega(\mu) - C_0(\lambda r_\mu(e,x))^{p-1}  \delta.\]
By the  choice of $\lambda$,  we have $H(\lambda r_\mu(e,x)e',x) \geq \mu$ which, by the definition of $r_\mu(e',x)$, gives 
\[ r_\mu(e',x) \leq \lambda r_\mu(e,x)  \leq  r_\mu(e,x) + 2\omega(\mu)^{-1}\lambda^{p-1}C_0r_\mu(e,x)^p\delta.\]
Sending $e' \to e$ so that $\delta = |e-e'| \to  0$ and $\lambda \to 1$,  we obtain the desired result
\[ |D_er_\mu(e,x)| \leq 2C_0\omega(\mu)^{-1}r_\mu(e,x)^p. \]
\end{proof}

\subsection{The planar metric problem}\label{sec:planar}
We establish some basic properties of the metric or minimal time problem for star-shaped sub-level Hamiltonians.  Given a closed set $S \subset \real^d$ and $\mu >0$ we look for a non-negative solution $m_\mu(x,S,H)$ to the problem,
\begin{equation}\label{eqn: metric to S}
H(Dm_\mu,x) = \mu \ \hbox{ in } \real^d \setminus S \ \text{and} \ m_\mu = 0 \  \hbox{ on }  \partial S.
\end{equation}
As described in subsection~\ref{sec: hom outline}, of particular interest is the planar metric problem, that is \eqref{eqn: metric to S} with a half-space target set $S=\mathcal{H}_e^\pm: = \{ x \in \real^d : \pm x \cdot e \geq 0 \}$, for some unit direction  \ $e \in S^{d-1}$.  A possible solution to \eqref{eqn: metric to S} is the maximal sub-solution. There may exist, however, more solutions -- for example this is the case when $H$ is spatially homogeneous and has a sub-level which is not star-shaped.  

\medskip

We show that, for star-shaped sub-level Hamiltonians, the planar metric problem \eqref{eqn: metric to S} has a comparison principle and, hence, a unique solution.  This is the key reason that allows the use of the planar metric problems to prove homogenization.

\medskip

In the first-order case we are considering here, we can use the transformation of subsection~\ref{sec: reduction} to show uniqueness. In fact, uniqueness is a corollary of the uniqueness of metric problems for homogeneous Hamiltonians established, for example, in \cite{ArmstrongCardaliaguet}.  Indeed,  Lemma~\ref{lem: eikonal eqn}  says that  any solution to \eqref{eqn: metric to S} by Lemma~\ref{lem: eikonal eqn} is also a solution for the Hamiltonian $r_\mu^{-1}(\hat\xi,x)|\xi|$. Since comparison/uniqueness holds for the latter problem, the Hamiltonian being positively homogeneous, comparison/uniqueness holds for $H$ as well.  This is in essence 
the proof of the next lemma which is omitted.

\begin{lem}\label{lem: pmp comp}
Assume \eqref{takis100} and let  $m^1$ and $m^2$  be respectively a subsolution and a non-negative supersolution of \eqref{eqn: metric to S}.  Then $m^1 \leq m^2$ in $\real^d \setminus S$. 
\end{lem}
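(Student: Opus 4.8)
\emph{Proof idea.} The plan is to reduce the assertion to the comparison principle for the planar metric problem of a positively $1$-homogeneous level-set Hamiltonian, which is available from \cite{ArmstrongCardaliaguet} (see also \cite{ArmstrongSouganidis}), via the change of Hamiltonian of subsection~\ref{sec: reduction}. Concretely, I would set $\tilde H_\mu(\xi,x):=r_\mu(\hat\xi,x)^{-1}|\xi|$, extended by $0$ at $\xi=0$, and observe that the proof of Lemma~\ref{lem: eikonal eqn} — which checks the sub-differential and super-differential test inequalities separately — shows that $m^1$ is a viscosity subsolution and $m^2$ a non-negative viscosity supersolution of $\tilde H_\mu(Du,x)=1$ in $\real^d\setminus S$, still with $m^1\le 0\le m^2$ on $\partial S$. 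The only point needing attention is that a test function with vanishing gradient is never relevant: this holds because $\tilde H_\mu(0,x)=0<1$ and, by \eqref{eqn: assumption min H}, $H(0,x)\le 0<\mu$, so the two formulations ``$H(Du,x)=\mu$'' and ``$\tilde H_\mu(Du,x)=1$'' have the same sub/super solutions.

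Next I would verify that $\tilde H_\mu$ is an admissible Hamiltonian for the homogeneous metric-problem comparison: it is positively $1$-homogeneous in $\xi$ by construction; Lemma~\ref{lem: r lip}(i) gives the uniform two-sided coercivity $A_\mu^{-1}|\xi|\le\tilde H_\mu(\xi,x)\le a_\mu^{-1}|\xi|$; Lemma~\ref{lem: r lip}(ii) gives that $e\mapsto r_\mu(e,x)$ is Lipschitz on $S^{d-1}$, hence $\xi\mapsto\tilde H_\mu(\xi,x)$ has the required continuity; and continuity (in fact a local Lipschitz bound) of $x\mapsto r_\mu(e,x)$ follows from the identity $H(r_\mu(e,x)e,x)=\mu$ together with \eqref{eqn: assumption cont} and the strict monotonicity \eqref{eqn: assumption strongest}, by an implicit-function argument. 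With these checked, the comparison principle for the planar metric problem of a coercive, continuous, positively homogeneous Hamiltonian yields $m^1\le m^2$ in $\real^d\setminus S$, which is the claim.

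For completeness one may also give the underlying self-contained argument, which follows the classical template for metric problems. The positive $1$-homogeneity enters through the rescaling $m^1\mapsto(1-\theta)m^1$, $\theta\in(0,1)$, which turns $m^1$ into a subsolution of $\tilde H_\mu(Du,x)=1-\theta$, opening a genuine gap with the supersolution equation; coercivity bounds the Lipschitz constant of $m^1$ by $A_\mu$, and since $m^1\le 0$ on all of $\partial S$ this forces $m^1(x)\le A_\mu\,\mathrm{dist}(x,S)$, controlling the growth at infinity, while the non-negativity of $m^2$ is precisely what compensates for not prescribing its boundary values on $\partial S$. One then shows $m^1$ is dominated by the maximal subsolution of \eqref{eqn: metric to S} (the unique solution, with an optimal-control representation) and that the latter is dominated by any non-negative supersolution, the two steps being finished by a doubling-of-variables argument — using the uniform-in-$\xi$-on-compacts modulus of continuity of $\tilde H_\mu$ in $x$, since $\tilde H_\mu$ is only continuous — followed by letting $\theta\to 0$.

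The main obstacle, and the reason the strict star-shapedness hypothesis \eqref{eqn: intro star-shapedness} is exactly what is needed, is that the metric problem lives on the \emph{unbounded} half-space $\real^d\setminus S$ and carries \emph{no zeroth-order term}, so a comparison principle is not automatic. Both difficulties are resolved by the positive homogeneity of $\tilde H_\mu$ (the strict-subsolution rescaling) and the uniform coercivity (the a priori Lipschitz bound on $m^1$ and the linear lower bound on non-negative supersolutions), and these properties are available precisely because strict star-shapedness of the sub-level sets produces a well-defined, bounded, strictly positive, Lipschitz radius function $r_\mu$; without it, uniqueness for the metric problem — and hence this entire route to homogenization — fails.
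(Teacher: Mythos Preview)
Your proposal is correct and follows exactly the paper's own approach: reduce the comparison for \eqref{eqn: metric to S} to the comparison for the positively $1$-homogeneous Hamiltonian $\tilde H_\mu(\xi,x)=r_\mu(\hat\xi,x)^{-1}|\xi|$ via Lemma~\ref{lem: eikonal eqn}, and then invoke the comparison principle for homogeneous metric problems from \cite{ArmstrongCardaliaguet}. The paper in fact omits the proof after stating precisely this reduction, so your additional verification that $\tilde H_\mu$ satisfies the required coercivity and continuity bounds (via Lemma~\ref{lem: r lip}) and your self-contained sketch are more than what the paper provides.
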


%

\subsection{The fluctuations estimate}  \label{sec: The fluctuations estimate}
We consider the stochastic part of the error estimate for the metric problem solutions, that is the fluctuations around the mean
\[ m_\mu(x,S) - \E m_\mu(x,S).\]
We state without details an estimate of the stochastic fluctuations for $S \subseteq \real^d$ compact.  The claim can be derived as a corollary of Proposition 3.1 of\cite{ArmstrongCardaliaguet} using the transformation presented previously. We remark that the dependence of the error on $\mu$ is better than the one obtained in \cite{ArmstrongCardaliaguet} because here  we consider only the first-order problem. The calculation of the exact constant is tedious and requires a careful reading through the proof of \cite{ArmstrongCardaliaguet}. 
\begin{prop}[Fluctuations estimate]\label{prop: fluctuation est}
Assume \eqref{takis100} and \eqref{takis105} and let 
 $S \subseteq \real^d$ be a compact set.  Then, for every $\overline{\mu}>1$,  there exists $C=C(d,\overline{\mu})\geq 1$ such that, for all $\mu \in (0,\overline{\mu}]$ and  $x\in \real^d \setminus S,$
\begin{equation}
\P(|m_\mu(x,S) - \E[ m_\mu(x,S)] |\geq \lambda) \leq C\exp\left( - \frac{a_\mu\lambda^2}{C(1+d(x,S))}\right).
\end{equation}
\end{prop}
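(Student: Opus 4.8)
The plan is to reduce the fluctuations estimate for the star-shaped Hamiltonian $H$ to the corresponding estimate for the associated $1$-homogeneous level-set Hamiltonian, for which the result is already available as Proposition~3.1 of \cite{ArmstrongCardaliaguet}. Recall from subsection~\ref{sec: reduction} that, via Lemma~\ref{lem: eikonal eqn}, the metric problem \eqref{eqn: metric to S} for $H$ at level $\mu$ coincides exactly with the metric problem for the positively $1$-homogeneous Hamiltonian $\tilde H_\mu(\xi,x) := r_\mu(\hat\xi,x)^{-1}|\xi|$; in particular $m_\mu(x,S,H) = m_1(x,S,\tilde H_\mu)$ pointwise, and this identity is deterministic (holds for every $\omega$). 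Hence the stochastic fluctuations of $m_\mu(\cdot,S,H)$ are literally the same random variable as those of the metric problem for $\tilde H_\mu$, so it suffices to invoke the known concentration estimate for $\tilde H_\mu$ and then track how the constants in that estimate depend on $\mu$ through the structural bounds on $r_\mu$ established in Lemma~\ref{lem: r lip}.

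The key steps, in order, are as follows. First I would record the deterministic identity $m_\mu(\cdot,S,H) = m_1(\cdot,S,\tilde H_\mu)$ coming from Lemma~\ref{lem: eikonal eqn}, noting that $\tilde H_\mu$ is a stationary, $1$-dependent random field of $1$-homogeneous Hamiltonians because $r_\mu$ is built pointwise from $H$ and inherits its measurability and finite-range-dependence (the $\sigma$-algebras $\mathcal F(U)$ are unchanged). Second, I would verify that $\tilde H_\mu$ satisfies the coercivity/growth/continuity hypotheses required by Proposition~3.1 of \cite{ArmstrongCardaliaguet} with explicit constants: by Lemma~\ref{lem: r lip}(i), $a_\mu \le r_\mu(e,x) \le A_\mu$, so $A_\mu^{-1}|\xi| \le \tilde H_\mu(\xi,x) \le a_\mu^{-1}|\xi|$, giving an ellipticity-type lower bound governed by $a_\mu$; and by Lemma~\ref{lem: r lip}(ii) the map $e\mapsto r_\mu(e,x)$ is Lipschitz with constant $L_\mu$, which controls $|D_\xi \tilde H_\mu|$ and hence the continuity modulus that enters the martingale-difference bounds. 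Third, I would feed these into the cited proposition: its statement yields a bound of the form $\P(|m_1(x,S,\tilde H_\mu) - \E m_1(x,S,\tilde H_\mu)| \ge \lambda) \le C\exp(-c\lambda^2/(1+d(x,S)))$ where $c$ is controlled below by a power of the lower ellipticity constant of $\tilde H_\mu$, i.e.\ essentially by $a_\mu$; collecting constants gives the claimed $\exp(-a_\mu\lambda^2/(C(1+d(x,S))))$ with $C = C(d,\overline\mu)$, using $\mu \le \overline\mu$ to absorb the $\mu$-dependence of all the other constants into a single $C(d,\overline\mu)$.

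The substance of the argument really is the bookkeeping: the martingale decomposition via localization in sub-level sets (Lemma~\ref{lem: localization in sublevels}, referenced in the outline), Azuma's inequality, and the resulting concentration estimate are all carried out in \cite{ArmstrongCardaliaguet}, so I would not reprove them; I would only point out that the localization property transfers verbatim because the metric problem is the same problem, and that the Azuma increments are bounded in terms of the speed bounds $a_\mu, A_\mu$. The main obstacle — and the reason the proof is stated without details — is extracting the precise $\mu$-dependence of the constant: the bound in \cite{ArmstrongCardaliaguet} is stated for a fixed Hamiltonian, and one must read through its proof to see that the concentration exponent scales like the lower propagation speed (here $a_\mu$) rather than some worse combination of $a_\mu$, $A_\mu$, $L_\mu$; the claim that the first-order case gives a cleaner $\mu$-dependence than the general result of \cite{ArmstrongCardaliaguet} is exactly this observation. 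Once that scaling is pinned down, restricting to $\mu \in (0,\overline\mu]$ lets all remaining $\mu$-dependent quantities ($A_\mu$, $L_\mu$, the continuity exponents $p,q$) be bounded in terms of $d$ and $\overline\mu$ alone, yielding the stated form.
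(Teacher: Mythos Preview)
Your proposal is correct and matches the paper's own approach exactly: the paper states the proposition without details and remarks that it ``can be derived as a corollary of Proposition~3.1 of \cite{ArmstrongCardaliaguet} using the transformation presented previously,'' i.e., precisely the reduction via Lemma~\ref{lem: eikonal eqn} to the $1$-homogeneous Hamiltonian $\tilde H_\mu$ with constants controlled by Lemma~\ref{lem: r lip}. Your observation about the cleaner $\mu$-dependence in the first-order case and the need to read through the proof of \cite{ArmstrongCardaliaguet} to extract the scaling by $a_\mu$ also mirrors the paper's remark verbatim.
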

As described in subsection~\ref{sec: hom outline}, Proposition~\ref{prop: fluctuation est} is proved by a decomposition of the difference $m_\mu(x,S) - \E[ m_\mu(x,S)]$ as a sum of bounded martingale differences.  The key lemma (Lemma~\ref{sec: hom outline}) needed for this martingale decomposition is the localization in sub-level sets property of geometric evolutions. Informally speaking, this says that $m_\mu(\cdot,S,H)$ in the sub-level $\{ m_\mu \leq t\}$ depends only on the values of $H$ there.  Its  proof is based on the comparison principle for the metric problem and, hence, it relies on the star-shapedness of the sub-levels. 
 \begin{lem}
 \label{lem: localization in sublevels}
Assume \eqref{takis100}, \eqref{takis105} and fix  $H_1, H_2  \in \Omega$ and $t >0$. If $ H_1 \equiv H_2 \ \hbox{ in } \ \real^d \times \{ x \in \real^d \setminus S: m_\mu(x,S,H_1) \leq t\}$, then   
 \[ m_\mu (\cdot,S,H_1) \leq m_\mu(\cdot,S,H_2) \ \text{ in} \ \{x \in \real^d \setminus S:m_\mu(x,S,H_1) \leq t\} .\]
 \end{lem}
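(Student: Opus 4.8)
\textit{Plan.} The plan is to convert the hypothesis into a statement about one Hamiltonian by \emph{truncating} $m^1 := m_\mu(\cdot,S,H_1)$ at height $t$, and then to invoke the comparison principle for the metric problem, Lemma~\ref{lem: pmp comp}. Write $m^2 := m_\mu(\cdot,S,H_2)$ and $E := \{x \in \real^d \setminus S : m^1(x)\le t\}$, so that, by assumption, $H_1(\xi,x)=H_2(\xi,x)$ for all $\xi\in\real^d$ and all $x\in E$, and observe that the \emph{open} set $\{m^1<t\}\setminus S$ is contained in $E$. The goal is to show that $w := m^1 \wedge t$ is a global viscosity subsolution of the $H_2$-metric problem in $\real^d\setminus S$ with $w=0$ on $\partial S$; once this is known, Lemma~\ref{lem: pmp comp} applied to the subsolution $w$ and the nonnegative supersolution $m^2$ of \eqref{eqn: metric to S} (for $H_2$) yields $w\le m^2$ in $\real^d\setminus S$, and since $w=m^1$ on $\{m^1\le t\}$ this is exactly $m^1\le m^2$ on $\{m^1\le t\}\setminus S$, which is the claim.

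To prove that $w$ is a subsolution I would first fix $s\in(0,t)$ and show that $w_s := m^1\wedge s$ is a viscosity subsolution of $H_2(Dv,x)=\mu$ in $\real^d\setminus S$, checking this locally on the three relatively open sets that cover $\real^d\setminus S$. On $\{m^1<s\}\setminus S\subseteq E$ one has $w_s=m^1$, which solves $H_1(Dm^1,x)=\mu$ and hence $H_2(Dm^1,x)=\mu$ because $H_1\equiv H_2$ there. On $\{m^1>s\}\setminus S$ one has $w_s\equiv s$, which is a subsolution since $\xi\mapsto H_2(\xi,x)$ is nondecreasing along rays from the origin by \eqref{eqn: assumption strongest}, so $H_2(0,x)=\min_\xi H_2(\xi,x)\le 0<\mu$ by the normalization \eqref{eqn: assumption min H}. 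Finally, a neighbourhood of the interface $\{m^1=s\}$ lies, since $s<t$, inside the open set $\{m^1<t\}\setminus S\subseteq E$, where $H_1\equiv H_2$ holds on a \emph{full} neighbourhood; there $w_s$ is the pointwise minimum of the two $H_2$-subsolutions $m^1$ and the constant $s$, hence an $H_2$-subsolution. Letting $s\uparrow t$, using $\|w_s-w\|_{L^\infty}\le t-s\to 0$ and stability of viscosity subsolutions under uniform limits, one concludes that $w=m^1\wedge t$ is a viscosity subsolution of $H_2(Dv,x)=\mu$ in $\real^d\setminus S$; the condition $w=0$ on $\partial S$ is inherited from $m^1$.

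The main obstacle is precisely this interface step: because $H_1$ and $H_2$ are assumed to agree only on the closed sublevel set $\{m^1\le t\}$ and not on a neighbourhood of its boundary $\{m^1=t\}$, one cannot argue directly that $m^1\wedge t$ glues across $\{m^1=t\}$ as a minimum of $H_2$-subsolutions. Introducing the auxiliary level $s<t$ is exactly what keeps all of the gluing strictly inside the region where $H_1\equiv H_2$, and the only remaining ingredient is the soft passage $s\uparrow t$; a purely bookkeeping point is the use of $H(0,x)=\min_\xi H(\xi,x)\le 0$ for admissible $H$, which is immediate from \eqref{eqn: assumption strongest} and \eqref{eqn: assumption min H}. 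I would also note in passing that running the symmetric argument (an optimal trajectory for $m^1$ issued from $S$ stays inside $\{m^1\le m^1(x)\}\subseteq E$, on which $H_1\equiv H_2$) gives the reverse inequality as well, so in fact $m^1=m^2$ on $\{m^1\le t\}\setminus S$; only the one inequality is recorded here because that is all that the martingale decomposition underlying Proposition~\ref{prop: fluctuation est} requires.
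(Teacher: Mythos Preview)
Your overall strategy is exactly the paper's: truncate $m_1$, show the truncation is an $H_2$-subsolution on all of $\real^d\setminus S$, apply Lemma~\ref{lem: pmp comp}, and pass to the limit to recover $m_1\wedge t\le m_2$. The paper also regularizes and then lets the regularization converge to $\min(\cdot,t)$.

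There is, however, a genuine gap at your interface step. The assertion ``$w_s$ is the pointwise minimum of two $H_2$-subsolutions, hence an $H_2$-subsolution'' invokes a principle that is \emph{false}: for viscosity solutions it is the \emph{maximum} of subsolutions that is always a subsolution, not the minimum. A clean one-dimensional counterexample is $H(\xi)=(|\xi|-2)^2$, $\mu=1$: both $u_1(x)=3x$ and $u_2(x)=-3x$ are subsolutions, but $\min(u_1,u_2)=-3|x|$ has $0\in D^+(-3|x|)(0)$ and $H(0)=4>1$. Notice that this $H$ fails \eqref{eqn: assumption strongest}; that is the point. What actually makes $\min(m^1,s)$ a subsolution in your setting is precisely the star-shapedness: since $H_2(\cdot,x)$ is nondecreasing along rays from the origin, scaling the gradient down can only decrease $H_2$. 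The paper makes this explicit by replacing the hard cutoff with a \emph{smooth} $\phi:[0,\infty)\to[0,t]$ satisfying $0\le\phi'\le1$ and $\phi\equiv t$ on $[t,\infty)$, and setting $w=\phi(m_1)$. Then on $R_t=\{m_1\le t\}$ one has, via the chain rule and \eqref{eqn: assumption strongest},
\[
H_2(Dw,x)=H_2(\phi'(m_1)Dm_1,x)=H_1(\phi'(m_1)Dm_1,x)\le H_1(Dm_1,x)=\mu,
\]
while outside $R_t$ the function $w$ is constant and $H_2(0,x)\le0\le\mu$. Letting $\phi\to(\cdot)\wedge t$ and invoking stability finishes. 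Your device of working at level $s<t$ to keep the interface strictly inside $\{H_1\equiv H_2\}$ is a nice touch, but the truncation itself still needs star-shapedness---not ``min of subsolutions''---to remain a subsolution across $\{m^1=s\}$; equivalently you could pass through Lemma~\ref{lem: eikonal eqn} and use that for $1$-homogeneous Hamiltonians any nondecreasing function of a subsolution is a subsolution.
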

 \begin{proof}
Let  $m_i := m_\mu(\cdot,S,H_i)$ with $i=1,2$, set  $R_t: = \{ x \in \real^d \setminus S: m_\mu(x,S,H_1) \leq t\}$.  fix   a smooth $\phi : [0,\infty) \to [0,t]$ such that $\phi' \leq 1$ and $\phi(s) = t$ for all $s \geq t$. 

\medskip

It follows that $w: = \phi(m_1)$ is a subsolution of $H_2(Dw,x) \leq \mu$ in $\real^d \setminus S$.  Indeed this is the case  in $\real^d \setminus R_t$ where  $w$ is constant there and the assumptions on $H$ yield  that constant functions are subsolutions $H(0,x) \leq 0 \leq \mu$ of $H \in\Omega$.   Using that $H_1 \equiv H_2$ in $R_t$, $\phi' \leq 1$ and the star-shapedness condition \eqref{eqn: assumption strongest}, we find 
  \[H_2(Dw,x) = H_2(\phi'(m_1(x))Dm_1,x) = H_1(\phi'(m_1(x))Dm_1,x) \leq \mu \ \hbox{ in } \ R_t.\]
Now the comparison principle for the planar metric problem associated with the operator $H_2$ that is  Lemma~\ref{lem: pmp comp} implies that $w \leq m_2$.  Finally,  letting  $\phi(s)$ to converge to $s \wedge t$, and, hence, $w \to m_1 \wedge t$ yields that $m_1 \leq m_2$ in the set $R_t$.
\end{proof}

\subsection{The bias estimate}  \label{sec: bias estimate}
Next we discuss the deterministic part of the error estimate, the convergence of the expected values $\E[m_\mu(x, \mathcal{H}_e^-)]$. Again our result is a corollary of \cite{ArmstrongCardaliaguet} by the level-set transformation.   
\begin{prop}\label{prop: bias estimate}
Let $\bar\mu \geq 1$, $\mu \in (0,\bar\mu]$ and $e \in S^{d-1}$.  There exists $\overline{r}_\mu(e)>0$ and $C=C(\textup{data},\bar\mu)\geq1$ such that, for every $x \in \mathcal{H}_e^+$,
\[ |\E[m_\mu(x, \mathcal{H}_e^-)] -\overline{r}_\mu(e) (x \cdot e)| \leq C_\mu(x \cdot e)^{1/2}\log^{1/2}(1+(x \cdot e)),\]
where $C_\mu = C(\textup{data},\overline{\mu}) a_\mu^{-1/2}(1+|\log L_\mu|)^{1/2}$.  Moreover, $(\mu,e) \to \overline{r}_\mu(e)$ is continuous on $(0,\infty) \times S^{d-1}$. 
\end{prop}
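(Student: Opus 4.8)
\emph{Proof proposal.} The plan is to deduce the statement from the corresponding bias estimate in \cite{ArmstrongCardaliaguet} through the level-set reduction of subsection~\ref{sec: reduction}, checking that the transformed Hamiltonian inherits all the hypotheses used there, and then to track the $\mu$-dependence of the constant. Fix $\mu \in (0,\bar\mu]$ and set $\tilde H_\mu(\xi,x) := r_\mu(\hat\xi,x)^{-1}|\xi|$. By Lemma~\ref{lem: eikonal eqn} the solution $m_\mu(\cdot,\mathcal H_e^-,H)$ coincides with the $\mu=1$ metric problem solution for the positively $1$-homogeneous Hamiltonian $\tilde H_\mu$; since $r_\mu(\cdot,x)$ is determined pointwise by $H(\cdot,x)$, the field $x \mapsto \tilde H_\mu(\cdot,x)$ is stationary with unit range of dependence, and Lemma~\ref{lem: r lip} shows it satisfies the coercivity, growth and continuity bounds needed in \cite{ArmstrongCardaliaguet}, with constants controlled by $a_\mu$, $A_\mu$ and $L_\mu$. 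Hence the bias estimate of \cite{ArmstrongCardaliaguet} applies and produces a deterministic limit $\overline r_\mu(e) := \lim_{t\to\infty} t^{-1}\E[m_\mu(te,\mathcal H_e^-)]$ together with the claimed bound, the constant $C_\mu$ being obtained by propagating the dependence on $a_\mu$ and $L_\mu$ through the argument.

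For the reader I would then recall the mechanism, emphasizing the two places where the star-shapedness enters. The map $t \mapsto \E[m_\mu(te,\mathcal H_e^-)]$ is approximately additive along $e$. For the upper (subadditive) direction: for $z$ on the intermediate plane $\Pi_t := \{x\cdot e = t\}$ one has, from comparison (Lemma~\ref{lem: pmp comp}), $m_\mu(x,\mathcal H_e^-) \le m_\mu(z,\mathcal H_e^-) + m_\mu(x,\{z\})$; choosing $z$ optimal for the second term forces $z$ into $\Pi_t \cap B_{Rt}(te)$ by finite speed of propagation (the front speeds lie between $a_\mu$ and $A_\mu$, so $R$ can be taken to depend only on $A_\mu/a_\mu$ and $1 \le s \le t$), and then the fluctuations estimate (Proposition~\ref{prop: fluctuation est}) together with a union bound over a net of that slab yields, after taking expectations and using stationarity to identify $\E[m_\mu((t+s)e,\Pi_t)] = \E[m_\mu(se,\mathcal H_e^-)]$,
\[
\bigl|\E[m_\mu((t+s)e,\mathcal H_e^-)]-\E[m_\mu(te,\mathcal H_e^-)]-\E[m_\mu(se,\mathcal H_e^-)]\bigr|\le C_\mu\,t^{1/2}\log^{1/2}(1+t),\qquad 1\le s\le t.
\]
The matching lower (superadditive) bound is argued in the same spirit: on the high-probability event that $m_\mu(\cdot,\mathcal H_e^-) \ge \E[m_\mu(te,\mathcal H_e^-)] - C_\mu t^{1/2}\log^{1/2}(1+t)$ uniformly on $\Pi_t \cap B_{Rt}(te)$, one again uses finite speed of propagation to replace the truncated plane by $\Pi_t$ at no cost and Lemma~\ref{lem: pmp comp} to compare $m_\mu(\cdot,\mathcal H_e^-)$ from below with the metric problem to $\Pi_t$ with that constant datum. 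A standard deterministic lemma converting almost-additivity into approximate linearity then gives the existence of $\overline r_\mu(e)$ as the stated limit together with the rate; positivity and finiteness of $\overline r_\mu(e)$ follow from the two-sided bounds $c(x\cdot e) \le m_\mu(x,\mathcal H_e^-) \le C(x\cdot e)$ implied by Lemma~\ref{lem: r lip}(i). Note that the localization in sub-level sets (Lemma~\ref{lem: localization in sublevels}), itself a consequence of the star-shapedness, is exactly what makes Proposition~\ref{prop: fluctuation est} available as input here.

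For the joint continuity of $(\mu,e) \mapsto \overline r_\mu(e)$ on $(0,\infty) \times S^{d-1}$ I would argue that for each fixed $t$ the pre-limit $t^{-1}\E[m_\mu(te,\mathcal H_e^-)]$ is continuous in $(\mu,e)$: by Lemma~\ref{lem: r lip} the radius function $r_\mu(e,x)$ is jointly continuous in $(\mu,e)$ uniformly in $x$, metric-problem solutions depend continuously on the Hamiltonian by Lemma~\ref{lem: pmp comp}, and the uniform bounds on $m_\mu$ let one pass the limit through the expectation. Since the constant $C_\mu$ depends only on $\bar\mu$ and on $a_\mu$, $L_\mu$, which are continuous and locally bounded away from their degenerate values, the rate estimate is locally uniform on $(0,\infty) \times S^{d-1}$, so $t^{-1}\E[m_\mu(te,\mathcal H_e^-)] \to \overline r_\mu(e)$ locally uniformly and the limit is continuous.

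I expect the main obstacle to be the quantitative superadditivity step: making precise the uniform control of the fluctuations of $m_\mu$ over the large slab of the intermediate hyperplane — the union bound needs a net of polynomially many points together with the spatial Lipschitz bound $\mathrm{Lip}(m_\mu) \lesssim a_\mu^{-1}$ coming from coercivity — and choosing the truncation radius $R$ so that the finite-speed replacement of the truncated plane by the full one is genuinely lossless. This is precisely where both the comparison principle (Lemma~\ref{lem: pmp comp}) and the $\mu$-explicit bounds (Lemma~\ref{lem: r lip}) are indispensable, and where the bookkeeping producing $C_\mu = C(\textup{data},\bar\mu) a_\mu^{-1/2}(1+|\log L_\mu|)^{1/2}$ is most delicate.
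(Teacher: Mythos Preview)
Your proposal follows the same route as the paper: reduce to the positively $1$-homogeneous Hamiltonian $\tilde H_\mu$ via Lemma~\ref{lem: eikonal eqn}, invoke the bias estimate of \cite{ArmstrongCardaliaguet}, and recall the almost-additivity mechanism for $t \mapsto \E m_\mu(te,\mathcal H_e^-)$ plus a dyadic iteration. The paper packages this as Lemmata~\ref{lem: almost additivity}--\ref{lem: expectations converge}, and your identification of the superadditive step and the bookkeeping for $C_\mu$ as the delicate points is accurate.

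One place where your sketch diverges from the paper and does not quite close: for the subadditive direction you use the point-target triangle inequality $m_\mu(x,\mathcal H_e^-) \le m_\mu(z,\mathcal H_e^-) + m_\mu(x,\{z\})$, but then conclude with ``$\E[m_\mu((t+s)e,\Pi_t)] = \E[m_\mu(se,\mathcal H_e^-)]$''. The triangle inequality produces the \emph{point} metric $m_\mu((t+s)e,\{z\})$, not the planar one $m_\mu((t+s)e,\Pi_t)$; the former is in general strictly larger, so stationarity of the planar problem does not apply to it directly. The paper avoids this by treating both directions symmetrically through a localized planar comparison (Lemma~\ref{lem: finite speed}): one compares $m^1 = m_\mu(\cdot,\mathcal H_e^-)$ with $m^2 = N_R^+(t) + m_\mu(\cdot,\mathcal H_e^- + te)$ in $\mathcal H_e^+ + te$ for the upper bound, and swaps roles using $N_R^-(t)$ for the lower. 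Only planar targets appear, so stationarity applies cleanly. Your superadditive sketch already matches this; the fix for the subadditive side is simply to run the same planar comparison rather than the point triangle inequality.
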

The proof of Proposition~\ref{prop: bias estimate} follows, as in \cite{ArmstrongCardaliaguet}, by a series of lemmata that are stated below without a proof.   The comparison principle for the metric problem again plays a key role. 
  
\medskip

The objective is  to show that $\E m_\mu(te,\mathcal{H}_e^-)$ is almost additive in $xt$. 
\begin{lem}\label{lem: almost additivity}
For all $\mu \in (0,\overline{\mu}]$ and  $t\geq s \geq 1$,
$$|\E m_\mu((t+s)e,\mathcal{H}_e^-) - \E m_\mu(te,\mathcal{H}_e^-) - \E m_\mu(se,\mathcal{H}_e^-)| \leq C_\mu(s\vee t)^{1/2}\log^{1/2}(1+s\vee t),$$
where $C_\mu = C(\overline{\mu}) a_\mu^{-1/2}(1+|\log L_\mu|)^{1/2}$.
\end{lem}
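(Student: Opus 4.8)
The plan is to prove the almost-additivity by comparing $m_\mu(\cdot,\mathcal H_e^-)$ with the metric problem solution to a translated half-space, and then controlling the error in this comparison using the fluctuations estimate (Proposition~\ref{prop: fluctuation est}). I will write $m_\mu(x) := m_\mu(x,\mathcal H_e^-)$. Fix $t \geq s \geq 1$. The one easy inequality is $\E m_\mu((t+s)e) \leq \E m_\mu(te) + \E m_\mu(se)$: by the comparison principle (Lemma~\ref{lem: pmp comp}) and the fact that $m_\mu(\cdot,\mathcal H_e^-)$ satisfies the metric equation in $\mathcal H_e^+ \supseteq \{x\cdot e > te\}$, the function $x \mapsto m_\mu(te) + m_\mu(x,\mathcal H_e^- + te\,e)$ dominates $m_\mu(x)$; then stationarity (translation by $te\,e$ is $\P$-preserving) upgrades this to the expectation inequality. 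The work is entirely in the reverse direction.

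For the lower bound on $\E m_\mu((t+s)e)$, first I would use the fluctuations estimate together with the uniform Lipschitz continuity of $m_\mu$ (from coercivity) and a union bound over a net of points on the hyperplane $\partial\mathcal H_e^+ + te\,e$ intersected with a large ball $B_{Rt}(te\,e)$, with $R = R(\mathrm{data})$ chosen so that the lateral escape of characteristics out of this ball is impossible on the relevant scale. This shows that, outside an event of probability $\leq C\exp(-c\,a_\mu\lambda^2)$, one has $m_\mu(x) \geq \E m_\mu(te) - \lambda (t)^{1/2}$ for all such $x$, i.e. $m_\mu$ is, up to an error of order $t^{1/2}$, at least the constant $\E m_\mu(te)$ on the truncated hyperplane at distance $te$. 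On this good event, the maximal subsolution characterization (or the comparison principle) lets me bound $m_\mu(\cdot)$ from below in $\mathcal H_e^+ + te\,e$ by $\E m_\mu(te) - \lambda t^{1/2} + m_\mu(\cdot,\mathcal H_e^- + te\,e)$, evaluated at $(t+s)e$; applying the fluctuations estimate once more at that single point and using stationarity, I obtain $\E m_\mu((t+s)e) \geq \E m_\mu(te) + \E m_\mu(se) - C_\mu(s\vee t)^{1/2}\log^{1/2}(1+s\vee t)$, where the logarithmic factor and the precise shape of $C_\mu = C(\overline\mu)a_\mu^{-1/2}(1+|\log L_\mu|)^{1/2}$ come from optimizing $\lambda$ against the number of net points (which is polynomial in $t$, hence contributes a $\log$ after taking $\lambda \sim \log^{1/2}$) and from tracking the dependence of the Lipschitz constant $L_\mu$ of $e \mapsto r_\mu(e,x)$ through the comparison step.

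The main obstacle is the lateral localization in the union-bound step: one must argue that the value $m_\mu((t+s)e)$ genuinely depends only on the behavior of $m_\mu$ on a bounded portion of the translated hyperplane, so that a net of polynomially many points suffices rather than the whole (non-compact) hyperplane. This is where the comparison principle for the planar metric problem (Lemma~\ref{lem: pmp comp}), combined with the finite propagation speed coming from the coercivity bound $a_\mu$ in Lemma~\ref{lem: r lip}(i), is essential — it is precisely the point at which star-shapedness of the sub-level sets enters. Since all of these ingredients (the transformation of subsection~\ref{sec: reduction}, Lemma~\ref{lem: pmp comp}, Proposition~\ref{prop: fluctuation est}) are available, and the corresponding statement for positively homogeneous Hamiltonians is exactly \cite{ArmstrongCardaliaguet}, the cleanest route is to invoke that reference via the level-set reduction $m_\mu(x,\mathcal H_e^-,H) = m_\mu(x,\mathcal H_e^-, r_\mu^{-1}(\hat\xi,x)|\xi|)$ of Lemma~\ref{lem: eikonal eqn} and simply record how the constants depend on $a_\mu$, $A_\mu$, $\omega(\mu)$ and $L_\mu$; the self-contained argument sketched above is the fallback if one wants to avoid re-reading \cite{ArmstrongCardaliaguet} line by line.
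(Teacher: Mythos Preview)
Your approach for the lower bound (the direction you call ``hard'') is essentially the paper's: use the fluctuations estimate plus a union bound to control $m_\mu$ on a truncated portion of the hyperplane $\{x\cdot e = t\}$, then invoke a localized comparison / finite-speed-of-propagation result (the paper's Lemma~\ref{lem: finite speed}) to propagate into $\mathcal H_e^+ + te$. The paper packages the union bound as the quantities $N_R^\pm(t) = \sup/\inf_{B_{Rt}(te)\cap \{x\cdot e = t\}} m_\mu$ (Lemma~\ref{lem: union bound}), but this is exactly what you describe, and your identification of the lateral localization as the main obstacle is on target.

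The gap is your claimed ``easy'' direction. The pointwise inequality $m_\mu(x) \leq m_\mu(te) + m_\mu(x,\mathcal H_e^- + te)$ does \emph{not} follow from Lemma~\ref{lem: pmp comp}: the right-hand side is indeed a supersolution in $\mathcal H_e^+ + te$, but on the boundary hyperplane $\{x\cdot e = t\}$ it equals the single random number $m_\mu(te)$, whereas the subsolution $m_\mu(\cdot)$ varies along that hyperplane and need not be bounded above by its value at the one point $te$. By stationarity one has $\E m_\mu(x) = \E m_\mu(te)$ for all such $x$, but comparison is a pathwise statement and you cannot take expectations before applying it. There is no subadditivity shortcut here: the Hamiltonian is not convex, so no control-theoretic triangle inequality is available for the planar metric problem. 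The paper accordingly treats the two directions symmetrically --- for the upper bound it compares $m^1 = m_\mu(\cdot,\mathcal H_e^-)$ with $m^2 = m_\mu(\cdot,\mathcal H_e^- + te) + N_R^+(t)$, and for the lower bound it swaps the roles using $N_R^-(t)$ --- in both cases paying the same $O(t^{1/2}\log^{1/2} t)$ error from Lemma~\ref{lem: union bound}. You should do the same.
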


The fluctuation estimate guarantees that, with high probability, $m_\mu$ is close to its mean in a large ball $B_{Rt}(te)$. Define
\begin{equation}
N_R^+(t) := \sup_{x \in B_{Rt}(te) \cap (\partial \mathcal{H}_e^+ + te)} m_\mu(x,\mathcal{H}_e^-) \ \ \text{and} \  \ N_R^-(t) := \inf_{x \in B_{Rt} \cap (\partial \mathcal{H}_e^+ + te)} m_\mu(x,\mathcal{H}_e^-).
\end{equation}
\begin{lem}\label{lem: union bound}
There exists $C=C(\overline{\mu})>0$ such that, for all  $\mu \in (0,\overline{\mu}]$ and $R, t >1$,
\begin{equation}\label{eqn: npm}
 \E|N^\pm_R(t) - \E[m_\mu(te,\mathcal{H}_e^-)]| \leq C a_\mu^{-1/2}t^{1/2}\log^{1/2}(1+Rt).
 \end{equation}
\end{lem}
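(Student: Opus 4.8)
The plan is to deduce Lemma~\ref{lem: union bound} from the fluctuations estimate (Proposition~\ref{prop: fluctuation est}) by a routine net-plus-union-bound argument, using in addition only the deterministic Lipschitz bound on $m_\mu$. First I would record two preliminaries. The target $\mathcal{H}_e^-=\{y:y\cdot e\le 0\}$ is invariant under every translation by a vector orthogonal to $e$, so stationarity \eqref{takis103} gives $\E[m_\mu(x,\mathcal{H}_e^-)]=\E[m_\mu(te,\mathcal{H}_e^-)]$ for every $x$ with $x\cdot e=t$, while $d(x,\mathcal{H}_e^-)=t$ for any such $x$; hence Proposition~\ref{prop: fluctuation est} (in the half-space setting in which the martingale decomposition of subsection~\ref{sec: hom outline} is carried out) reads
\[ \P\big(|m_\mu(x,\mathcal{H}_e^-)-\E[m_\mu(te,\mathcal{H}_e^-)]|\ge \lambda\big)\le C\exp\!\Big(-\tfrac{a_\mu\lambda^2}{C(1+t)}\Big)\quad\text{whenever }x\cdot e=t. \]
Second, the coercivity lower bound in \eqref{eqn: assumption bounds} — equivalently the upper bound $A_\mu$ on the radius function from Lemma~\ref{lem: r lip}(i) together with the reduction $|Dm_\mu|=r_\mu(\widehat{Dm_\mu},x)$ — forces $m_\mu(\cdot,\mathcal{H}_e^-)$ to be globally $A_\mu$-Lipschitz, and $A_\mu\le A_{\overline\mu}=:C(\overline\mu)$ since $\mu\le\overline\mu$.

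Then I would cover the $(d-1)$-dimensional disk on which $N^\pm_R(t)$ is defined — in both cases a disk of radius $\le Rt$ centered at $te$ — by a maximal $1$-separated subset $\{x_1,\dots,x_M\}$, so that $M\le C(d)(Rt)^{d-1}$, every point of the disk lies within distance $1$ of some $x_i$, and $te$ may be taken to be one of the $x_i$. For the upper tail of $N^+_R(t)$ the Lipschitz bound gives $N^+_R(t)\le \max_i m_\mu(x_i,\mathcal{H}_e^-)+A_\mu$, so the union bound over the $M$ points yields, for all $s\ge A_\mu$,
\[ \P\big(N^+_R(t)-\E[m_\mu(te,\mathcal{H}_e^-)]\ge s\big)\le C(d)(Rt)^{d-1}\exp\!\Big(-\tfrac{a_\mu(s-A_\mu)^2}{C(1+t)}\Big). \]
For the lower tail of $N^+_R(t)$ I would instead use $te$ in the disk, so that $N^+_R(t)\ge m_\mu(te,\mathcal{H}_e^-)$ and the single-point estimate above controls $\P(N^+_R(t)-\E[m_\mu(te,\mathcal{H}_e^-)]\le -s)$ directly; the two tails of $N^-_R(t)$ are handled symmetrically, the roles of the net and of the point $te$ being exchanged.

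Finally I would integrate these tails via $\E|N^\pm_R(t)-\E[m_\mu(te,\mathcal{H}_e^-)]|=\int_0^\infty \P(|N^\pm_R(t)-\E[m_\mu(te,\mathcal{H}_e^-)]|\ge s)\,ds$: the threshold at which $C(d)(Rt)^{d-1}\exp(-a_\mu s^2/(C(1+t)))$ drops below $1$ is $s\approx (C(1+t)\log(1+Rt)/a_\mu)^{1/2}$, and the Gaussian tail beyond it contributes a comparable amount, so, using $t>1$ to write $1+t\le 2t$,
\[ \E|N^\pm_R(t)-\E[m_\mu(te,\mathcal{H}_e^-)]|\le C(\overline\mu)\,a_\mu^{-1/2}t^{1/2}\log^{1/2}(1+Rt)+C(\overline\mu). \]
Since $R,t>1$ and $\mu\le\overline\mu$ the first term on the right is bounded below by a positive constant depending only on $\overline\mu$, so the additive $C(\overline\mu)$ is absorbed, which is exactly \eqref{eqn: npm}.

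I expect the only delicate point to be the bookkeeping of constants: verifying that the $(d-1)$-dimensional net costs only a factor $\log(1+Rt)$ (rather than a power of $Rt$) in the effective variance proxy, and that the additive $A_\mu$ coming from the Lipschitz oscillation is harmless because the target estimate is already bounded below uniformly for $R,t>1$ and $\mu\le\overline\mu$. No genuinely new idea beyond Proposition~\ref{prop: fluctuation est} and the Lipschitz bound is needed — this is the standard step passing from pointwise fluctuation control to control of $m_\mu$ uniformly over a large piece of the target hyperplane.
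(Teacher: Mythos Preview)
Your proposal is correct and is the standard net-plus-union-bound argument that the lemma's very title points to. The paper does not give its own proof here --- it states Lemma~\ref{lem: union bound} without proof, deferring to \cite{ArmstrongCardaliaguet} --- so there is no alternative argument to compare against; your approach is precisely the intended one, and your handling of the two minor technicalities (that Proposition~\ref{prop: fluctuation est} is formally stated for compact $S$ while you need it for $\mathcal{H}_e^-$, and the absorption of the additive $A_\mu$ term) is appropriate.
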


Since $m_\mu(\cdot,\mathcal{H}_e^-)$ is approximately a constant on $B_{Rt}(te) \cap \{ x\in \R^d: x \cdot e = t\}$  we compare $m_\mu$ with $m_\mu(\cdot,\mathcal{H}_e^-+te)+\E m_\mu(te,\mathcal{H}_e^-)$ in the domain $\mathcal{H}_e^++te$.  This requires a quantitative and localized version of uniqueness for the planar metric problem.  Then the error in linearity of $\E m_\mu(te,\mathcal{H}_e^-)$ can be rewritten as,
\[   |\E (\E m_\mu(te,\mathcal{H}_e^-)+m_\mu(se,\mathcal{H}_e^-+te) -  m_\mu((t+s)e,\mathcal{H}_e^-))|\]
where the quantity inside the expectation will be estimated by the comparison/uniqueness result.

\medskip

For the argument we need the following refinement  of the metric problem uniqueness, which  is very close in spirit to finite speed of propagation.
\begin{lem}\label{lem: finite speed}
Fix $e \in S^{d-1}$. Let $m^1\in C(\mathcal{H}_{e}^+)$ and  $m^2 \in  C(\mathcal{H}_{e}^+)$ be respectively a  sub-solution and a locally Lipschitz  super-solution to $ H(Dm,y) = \mu \ \hbox{ in } \ \mathcal{H}_e^+$. Assume that there exist $M,K >0$ and $ R>L_\mu M$, $L_\mu$ being the upper bound for the Lipschitz constant of $r_\mu^{-1}( \cdot,x)$ in Lemma~ \ref{lem: r lip},  such that,  
\[ 0 \leq m^1\leq M + K|x|, \ \ 0 \leq m^2 \leq M + K|x| \ \text{ and} \  m_1 \leq m_2 \ \hbox{ on } \ \partial \mathcal{H}_e^+ \cap B_R.\]
 Then, for $s>0$ such that $R \geq L_\mu M + (K+1)s$, 
\[ m_1(se) \leq m_2(se).\]
\end{lem}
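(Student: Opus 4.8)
The plan is to reduce to a positively $1$-homogeneous Hamiltonian and then to ``localize'' the value at $se$ away from the part of $\partial\mathcal{H}_e^+$ lying outside $B_R$ by means of a barrier adapted to $r_\mu$. By Lemma~\ref{lem: eikonal eqn} I may assume $H(\xi,x)=r_\mu(\hat\xi,x)^{-1}|\xi|$ is positively $1$-homogeneous, so that (the relevant level of) the metric problem reads $|Dv|=r_\mu(\widehat{Dv},x)$; Lemma~\ref{lem: r lip} supplies $a_\mu\le r_\mu\le A_\mu$ and the Lipschitz bound $L_\mu$ for $r_\mu(\cdot,x)$ on $S^{d-1}$, and Lemma~\ref{lem: pmp comp} supplies comparison for this problem. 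Set $\Omega:=\mathcal{H}_e^+\cap B_R$; since $R\ge L_\mu M+(K+1)s>s$, the point $se$ lies in the interior of $\Omega$. Write $\partial\Omega=\Gamma_0\cup\Gamma_1$ with $\Gamma_0:=\partial\mathcal{H}_e^+\cap\overline{B_R}$ the flat face, on which $m^1\le m^2$ by hypothesis, and $\Gamma_1:=\partial B_R\cap\overline{\mathcal{H}_e^+}$, on which only $0\le m^1,m^2\le M+KR$ is known.

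Next I would introduce two comparison functions on $\Omega$: let $\widehat m$ solve $|Dv|=r_\mu(\widehat{Dv},x)$ in $\Omega$ with $v=m^1$ on all of $\partial\Omega$, and let $\Psi$ solve the same equation in $\Omega$ with $v=m^1$ on $\Gamma_0$ and $v=0$ on $\Gamma_1$. Since $m^1$ is a subsolution with boundary trace $m^1$, the (bounded-domain) comparison principle gives $m^1\le\widehat m$ in $\Omega$; since the boundary datum of $\Psi$ is everywhere $\le m^2|_{\partial\Omega}$ and $m^2$ is a nonnegative locally Lipschitz supersolution in $\Omega$, comparison gives $\Psi\le m^2$ in $\Omega$. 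The proof is then reduced to the identity $\widehat m(se)=\Psi(se)$, i.e.\ to showing that $\Gamma_1$ is ``invisible'' from $se$. For this I would use the optimal control representation of the metric solution: writing $d_\mu(y,x)$ for the geodesic distance from $y$ to $x$ in the Finsler metric associated with $r_\mu$, which satisfies $a_\mu|y-x|\le d_\mu(y,x)$ always and $d_\mu(y,x)\le A_\mu|y-x|$ when $[y,x]\subset\Omega$, one has $\widehat m(x)=\inf_{y\in\partial\Omega}\bigl(m^1(y)+d_\mu(y,x)\bigr)$ and $\Psi(x)=\min\bigl\{\inf_{y\in\Gamma_0}(m^1(y)+d_\mu(y,x)),\ \inf_{y\in\Gamma_1}d_\mu(y,x)\bigr\}$. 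Taking $y=0\in\Gamma_0$ gives $\widehat m(se)\le m^1(0)+A_\mu s\le M+A_\mu s$, whereas $d_\mu(y,se)\ge a_\mu(R-s)$ for every $y\in\Gamma_1$; hence, once one checks that the stated hypotheses $R>L_\mu M$ and $R\ge L_\mu M+(K+1)s$ force $a_\mu(R-s)>M+A_\mu s$, the infima over $\Gamma_1$ drop out of both formulas and $\widehat m(se)=\Psi(se)$. Combining the three displays yields $m^1(se)\le\widehat m(se)=\Psi(se)\le m^2(se)$.

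The content of the lemma is thus the finite speed of propagation inherent in the metric problem, and the only real work is technical. The two points requiring care are: (a) the comparison principle on $\Omega$ with merely continuous, nonzero boundary data and with $m^2$ only \emph{locally} Lipschitz, which rests on the star-shapedness of the sublevels of $H$ through Lemma~\ref{lem: eikonal eqn}, Lemma~\ref{lem: pmp comp} and the reduction of subsection~\ref{sec: reduction}; and (b) replacing the control representation above by explicit supersolutions built from $r_\mu$ — a cone $x\mapsto r_\mu(\widehat{x-y_0},y_0)|x-y_0|$ is only an approximate solution, and an honest supersolution is obtained after a correction of relative size governed by $L_\mu$ times the working scale $\sim M$, which is precisely the origin of the constant $L_\mu M$ in the hypothesis (and of $K$ in place of $A_\mu/a_\mu$). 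I expect this constant-bookkeeping, rather than any new idea, to be the main obstacle. Alternatively, once the reduction to the homogeneous Hamiltonian $r_\mu^{-1}(\hat\xi,x)|\xi|$ is in place, the statement follows immediately from the corresponding finite-speed-of-propagation lemma for positively homogeneous Hamiltonians in \cite{ArmstrongCardaliaguet}.
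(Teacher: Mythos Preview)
Your outline relies in an essential way on convexity that is not available here. Both the Hopf--Lax/Finsler representation $\widehat m(x)=\inf_{y\in\partial\Omega}(m^1(y)+d_\mu(y,x))$ and the bounded-domain comparison principle with arbitrary Dirichlet data (used to get $m^1\le\widehat m$ and $\Psi\le m^2$) require the Hamiltonian to be convex in the gradient; the hypothesis here is only that the sublevel sets of $H(\cdot,x)$ are star-shaped. The comparison principle established in Lemma~\ref{lem: pmp comp} is specific to the planar metric problem with zero boundary data and nonnegative supersolutions --- it does not give comparison on $\Omega=\mathcal H_e^+\cap B_R$ with data $m^1|_{\partial\Omega}$. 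Likewise, without convexity there is no geodesic distance $d_\mu$ for which solutions of $|Dv|=r_\mu(\widehat{Dv},x)$ admit a control formula, so the argument that ``$\Gamma_1$ is invisible from $se$'' has no foundation. Your point (b) correctly identifies that the control formula must be replaced, but this is not ``constant-bookkeeping'': it is exactly the heart of the matter, and the cone barriers you propose are neither sub- nor supersolutions in general when the level sets of $H$ are non-convex. The same objection applies to the verification that $R\ge L_\mu M+(K+1)s$ forces $a_\mu(R-s)>M+A_\mu s$; this simply fails for generic $K$ and $L_\mu$.

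The paper takes a genuinely different route that sidesteps convexity entirely. It passes to the \emph{time-dependent} problem: setting $w(x,t):=m^1(x)\wedge t$, one checks (via the soft-min approximation $w^\lambda=-\lambda^{-1}\log(e^{-\lambda m^1}+e^{-\lambda t})$ and using only the $1$-homogeneity of $r_\mu(\hat\xi,x)^{-1}|\xi|$) that $w$ is a subsolution of $w_t+r_\mu(\widehat{Dw},x)^{-1}|Dw|\le 1$ in $\mathcal H_e^+\times(0,\infty)$, with $w(\cdot,0)\le 0$ and $w\le m^2$ on $(\partial\mathcal H_e^+\cap B_R)\times(0,\infty)$. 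Standard finite speed of propagation for the evolution equation (which needs only the Lipschitz bound $L_\mu$ on the Hamiltonian, not convexity) then gives $w(\cdot,t)\le m^2$ in $B_{R-L_\mu t}\cap\mathcal H_e^+$. Choosing $t=M+Ks\ge m^1(se)$ yields $m^1(se)=w(se,t)\le m^2(se)$ provided $R-L_\mu(M+Ks)\ge s$, which is precisely the stated hypothesis. Your fallback of citing the corresponding lemma in \cite{ArmstrongCardaliaguet} is fine, but the paper's proof is self-contained and short, and it is worth seeing that the only structural ingredient needed is positive homogeneity, not convexity.
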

Now from Lemma~\ref{lem: union bound} and Lemma~\ref{lem: finite speed} together we can prove the almost additivity (Lemma~\ref{lem: almost additivity}).
\begin{proof}[Proof of Lemma~\ref{lem: almost additivity}.]   Without loss we may assume  that $ s \leq t$. Let $N^\pm$ as defined above in \eqref{eqn: npm} with $R = L_\mu A_{\bar\mu}+(L_\mu+A_{\bar\mu})t $.  
\medskip

We apply  Lemma~\ref{lem: finite speed} to $m^1(x) = m_\mu(x,\mathcal{H}_e^-)$ and $m^2(x) = m_\mu(x,\mathcal{H}_e^++te)+N^+(t)$ in the domain $\mathcal{H}_e^++te$.  Checking the hypotheses of the localization result, we see that $m^1$ and  $m^2$ are both solutions to
$$ H(Dm^j,x) = \mu \ \hbox{ in } \ \mathcal{H}_e^+ + te.$$
Recalling that $|Dm| = r_\mu(\tfrac{Dm}{|Dm|},x) \leq A_\mu$, we obtain the Lipschitz estimate $\|Dm^j\|_{\infty} \leq A_\mu$ and,  in particular,
\[ 0 \leq m^j(x) \leq A_\mu (t+|x-te|) \ \hbox{ in } \ \mathcal{H}^+_e.\]
and then we  take $M=K= A_\mu \leq A_{\overline{\mu}}$ in the statement of Lemma~\ref{lem: finite speed}.  

\medskip
Finally, since  the ordering $m^1 \leq m^2$ on $B_{R}(te) \cap (\partial \mathcal{H}^+_e+te)$ follows directly from the definition of $N^+(t),$  we find using the particular our choice of $R$ above,
\[ m_\mu((t+s)e,\mathcal{H}^-_e) \leq  m_\mu(se,\mathcal{H}^-_e+te)+N^+(t).\]
Taking expectations on both sides and using the $\real^d$-stationarity of $m_\mu(x,\mathcal{H}_e^++\cdot)$,
we obtain 
\begin{align*}
\E m_\mu((t+s)e,\mathcal{H}^-_e) &\leq  \E m_\mu(se,\mathcal{H}^-_e+te)+ \E m_\mu(te,\mathcal{H}^-_e) +C(\bar\mu) a_\mu^{-1/2}t^{1/2}\log^{1/2}(1+Rt) \\
&=\E m_\mu(se,\mathcal{H}^-_e)+ \E m_\mu(te,\mathcal{H}^-_e) +C(\bar\mu) a_\mu^{-1/2}t^{1/2}\log^{1/2}(1+Rt) \\
& \leq \E m_\mu(se,\mathcal{H}^-_e)+ \E m_\mu(te,\mathcal{H}^-_e) +C(\bar\mu) a_\mu^{-1/2}(1+|\log L_\mu|)^{1/2}t^{1/2}\log^{1/2}(1+t).
\end{align*}
For the reverse inequality we compare $m^1(x) = m_\mu(x,\mathcal{H}_e^++te)+N^-(t)$ with $m^2(x) = m_\mu(x,\mathcal{H}_e^-)$.
\end{proof}
We now return to the proof of Lemma~\ref{lem: finite speed}.
\begin{proof}[Proof of Lemma~\ref{lem: finite speed}]
The  $t$-level sets of the solution $m_\mu$ to  the metric problem can be thought  as the location at time $t$ of a front  moving  with normal velocity $r_\mu^{-1}$.  Indeed,  we show that  ${w}(x,t): = m^1(x) \wedge t$ is a subsolution to 
\begin{equation}\label{eqn: t dep}
 {w}_t +r_\mu(\tfrac{D{w}}{|D{w}|},x)^{-1}|D{w}| \leq 1  \ \hbox{ in } \ \mathcal{H}_e^+ \times (0,\infty).
 \end{equation}
Assuming for the moment \eqref{eqn: t dep} and noting that the assumptions yield that 
\begin{equation}
{w}(\cdot ,0) \leq 0 \ \hbox{ in } \ \mathcal{H}_e^+ \ \hbox{ and } \ {w} \leq m^1 \leq m^2 \ \hbox{ on } \ (\partial \mathcal{H}_e^+ \cap B_R) \times (0,\infty),
\end{equation}
we make use
of the finite speed of propagation property of the time dependent Hamilton-Jacobi equation with $L_\mu$ as in the statement  to get 
\[ w(\cdot,t) \leq m^2 \ \hbox{ in } \ B_{R- L_\mu t} \cap \mathcal{H}_e^+.\]
Note that if $ t \geq m^1(se)$, which follows from the assumption of the Lemma if $t = M+Ks$, then $m^1(se) = w(se,t)$, and  we obtain that, as long as $R - L_\mu(M+Ks)\geq s,$
\[ m^1(se) \leq  m^2(se).\]
It remains to prove \eqref{eqn: t dep}, which follows by showing that $w$ is the  uniform limit, as $\lambda \to \infty$, of the $w_\lambda$'s given by 
\begin{equation}
 {w}^\lambda(x,t): = -\frac{1}{\lambda}\log(\exp(-\lambda m^1(x)) + \exp (-\lambda t)),
  \end{equation}
 which are themselves subsolutions of  \eqref{eqn: t dep}.

\medskip

Since the limit part of the assertion above is obvious, here we only check the subsolution property.  For this we argue as if the $w_\lambda$'s were smooth, the rigorous argument following from classical viscosity solutions considerations. 

\medskip

Let 
$Z(x,t): = \exp(-\lambda m^1(x)) + \exp (-\lambda t).$
It follows that 
$$ Dw^\lambda = Dm^1 Z^{-1}\exp(-\lambda m^1) \ \hbox{ and } \ w^\lambda_t = Z^{-1}\exp(-\lambda t).$$
Then, using $1$-homogeneity of the Hamiltonian and the subsolution property of $m^1$, we get
\begin{align*}
 w^\lambda_t + r_\mu(\tfrac{Dw^\lambda}{|Dw^\lambda|},x)^{-1}|Dw^\lambda| 
 &= Z^{-1}\exp(-\lambda t) + r_\mu(\tfrac{Dm^1}{|Dm^1|},x)^{-1}|Dm^1| Z^{-1}\exp(-\lambda m^1) \\
 &\leq Z^{-1}(\exp(-\lambda t)+\exp(-\lambda m^1)) = 1.
 \end{align*}
\end{proof}

Finally one obtains the rate of convergence of the expectations by using the almost additivity at a sequence of dyadic scales.  This yields the first part of the Proposition~\ref{prop: bias estimate}.
\begin{lem}\label{lem: expectations converge}
There exists $\overline{r}_\mu(e)\geq 0$ such  that, all $\overline{\mu}>1$,  $\mu \in (0,\overline{\mu}]$ and $t >1$,
$$|\tfrac{1}{t}\E m_\mu(te) - \overline{r}_\mu(e)| \leq C_\mu t^{-1/2}\log^{1/2}(1+t),$$
where $C_\mu: = C(\overline{\mu}) a_\mu^{-1/2}(1+|\log L_\mu|)^{1/2}$.
\end{lem}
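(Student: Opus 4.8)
The plan is to derive the statement from the almost-additivity of $g(t):=\E m_\mu(te,\mathcal H_e^-)$ proved in Lemma~\ref{lem: almost additivity}, by the standard argument that upgrades an approximate subadditivity at dyadic scales to a convergence rate. Write $\psi(r):=r^{1/2}\log^{1/2}(1+r)$, so Lemma~\ref{lem: almost additivity} reads $|g(t+s)-g(t)-g(s)|\le C_\mu\psi(t)$ for $t\ge s\ge 1$. Two elementary facts about $g$ will be used throughout. First, since $m_\mu(\cdot,\mathcal H_e^-)$ solves the eikonal equation $|Dm_\mu|=r_\mu(\tfrac{Dm_\mu}{|Dm_\mu|},x)$ with $r_\mu\le A_\mu$ by Lemma~\ref{lem: r lip}(i), and vanishes on $\partial\mathcal H_e^-$, the map $g$ is $A_\mu$-Lipschitz on $[0,\infty)$ and $0\le g(t)\le A_\mu t$. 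Second, $A_\mu\le C_\mu$ for every $\mu\in(0,\overline\mu]$ (for $\mu$ bounded away from $0$ both are comparable to constants $C(\overline\mu)$, while as $\mu\to 0$ one has $A_\mu$ bounded and $a_\mu^{-1/2}\to\infty$); hence any error of size $O(A_\mu/t)$ or $O(\overline{r}_\mu(e)/t)$ will be absorbed into $C_\mu\psi(t)/t$ at the end.

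First I would handle the dyadic scales. Put $a_n:=2^{-n}g(2^n)$; applying Lemma~\ref{lem: almost additivity} with $t=s=2^n$ gives $|a_{n+1}-a_n|\le 2^{-n-1}C_\mu\psi(2^n)$. Since $\psi(2^{n+1})/\psi(2^n)\ge 2^{1/2}$, the quantities $2^{-n-1}\psi(2^n)$ are (up to a universal factor) a convergent geometric series, so $(a_n)$ is Cauchy; denote its limit $\overline{r}_\mu(e)$, which is $\ge 0$ because $g\ge 0$. Summing the tail,
\[
|a_n-\overline{r}_\mu(e)|\;\le\;\sum_{k\ge n}2^{-k-1}C_\mu\psi(2^k)\;\le\; C\,C_\mu\,2^{-n}\psi(2^n),
\qquad\text{i.e.}\qquad |g(2^n)-2^n\,\overline{r}_\mu(e)|\le C\,C_\mu\,\psi(2^n).
\]

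Next I would pass to a general $t>1$ by peeling off the binary digits of $t$. Write $t=2^{n_1}+2^{n_2}+\cdots+2^{n_j}+f$ with integers $n_1>n_2>\cdots>n_j\ge 0$ and $f\in[0,1)$, so that $2^{n_1}\le t<2^{n_1+1}$. Set $R_0:=t$ and $R_i:=R_{i-1}-2^{n_i}$, so $R_i=2^{n_{i+1}}+\cdots+f<2^{n_i}$ while $R_{i-1}\ge 2^{n_i}\ge 1$ and $R_i\ge 2^{n_{i+1}}\ge 1$ for $i<j$. For each $i<j$ I would apply Lemma~\ref{lem: almost additivity} to the splitting $R_{i-1}=2^{n_i}+R_i$ (the larger summand being $2^{n_i}$), and for the last step use the $A_\mu$-Lipschitz bound $|g(R_{j-1})-g(2^{n_j})|\le A_\mu f$; adding the $j$ resulting relations yields
\[
\Bigl| g(t)-\sum_{i=1}^{j} g(2^{n_i}) \Bigr|\;\le\; A_\mu + C_\mu\sum_{i=1}^{j-1}\psi(2^{n_i}).
\]
Then I would insert $g(2^{n_i})=2^{n_i}\overline{r}_\mu(e)+O(C_\mu\psi(2^{n_i}))$ from the dyadic step, use $\sum_{i=1}^j 2^{n_i}=t-f$ with $|f|<1$, and bound $\sum_{i=1}^{j}\psi(2^{n_i})\le\sum_{n=0}^{n_1}\psi(2^n)\le C\psi(2^{n_1})\le C\psi(t)$ (again by $\psi(2^{n+1})/\psi(2^n)\ge 2^{1/2}$). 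This gives $|g(t)-\overline{r}_\mu(e)\,t|\le A_\mu+\overline{r}_\mu(e)+C\,C_\mu\psi(t)\le C'\,C_\mu\psi(t)$, and dividing by $t$ produces exactly the claimed estimate, after relabelling $C(\overline\mu)$ in $C_\mu$ to absorb the universal constants. Continuity of $(\mu,e)\mapsto\overline{r}_\mu(e)$ is inherited from Proposition~\ref{prop: bias estimate}/Lemma~\ref{lem: almost additivity} and the uniformity of the above rate.

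There is no genuine obstacle here. The only points needing care are the bookkeeping in the binary-peeling step — checking that at each stage the remainder is both $\le$ the digit just removed (so Lemma~\ref{lem: almost additivity} applies with the correct role of the larger argument) and $\ge 1$ except for the final fractional remainder $f$, which is dispatched by Lipschitz continuity — and the geometric-summation estimate $\sum_i\psi(2^{n_i})\lesssim\psi(t)$, which is exactly where the super-geometric growth $\psi(2^{n+1})/\psi(2^n)\ge\sqrt2$ enters. The remaining subtlety, namely that $A_\mu$ and $\overline{r}_\mu(e)$ are dominated by $C_\mu$ so that the $O(1/t)$ corrections are harmless, is immediate from Lemma~\ref{lem: r lip}(i).
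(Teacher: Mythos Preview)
Your argument is correct and is exactly the approach the paper indicates: it states only that ``one obtains the rate of convergence of the expectations by using the almost additivity at a sequence of dyadic scales'' without further details, and your dyadic doubling plus binary-peeling argument is the standard way to execute that. The bookkeeping is handled carefully (in particular the check that $R_i\ge 1$ for $i<j$ and the Lipschitz treatment of the fractional remainder), and the absorption of the $O(A_\mu)$ terms into $C_\mu\psi(t)$ is legitimate since $A_\mu$ is bounded on $(0,\overline\mu]$ while $C_\mu\gtrsim a_\mu^{-1/2}$ is bounded below.
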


\subsection{The effective Hamiltonian $\overline H$}\label{sec:effective}  Using $\overline{r}_\mu$ we define, for all  $\xi \in \real^d \setminus \{0\}$, the effective Hamiltonian $\overline {H}(\xi)$ as 
\begin{equation}\label{takis140}
\overline{H}(\xi) := \inf \{ \mu >0 : \overline{r}_\mu(\tfrac{\xi}{|\xi|}) \geq |\xi| \}.
\end{equation}

We discuss the structural properties of $\overline{r}_\mu$ and $\overline{H}$ which are inherited from the original Hamiltonian.  
\medskip

The continuity and monotonicity properties of $\overline{r}_\mu$ follow from the next lemma, which we state without proof.
\begin{lem}\label{lem: hom r prop}
For any $\overline{\mu}\geq 1$ there exists $C(\overline{\mu}) \geq 1$ such  that, $\mu \in (0,\overline \mu]$ and $e, e_1,e_2\in S^{d-1}$,
\[C(\overline{\mu})^{-1}\mu\leq \frac{d}{d\mu}\overline r_\mu(e) \leq C(\overline{\mu})\frac{1}{\mu\omega(\mu)} \ \text{and } \  |\overline r_\mu(e_1) - \overline r_\mu(e_2) | \leq C(\overline{\mu})(1+L_\mu) |e_1 - e_2|. \]
\end{lem}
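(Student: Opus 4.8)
The plan is to obtain all the stated bounds from the level-set reduction of subsection~\ref{sec: reduction}. By Lemma~\ref{lem: eikonal eqn} the planar metric problem for $H$ at level $\mu$ coincides with the one for the positively $1$-homogeneous Hamiltonian $\widetilde{H}_\mu(\xi,x):=r_\mu(\hat\xi,x)^{-1}|\xi|$, so Lemma~\ref{lem: expectations converge} identifies $\overline{r}_\mu(e)=\lim_{t\to\infty}t^{-1}\E m_\mu(te,\mathcal{H}_e^-)$ with the homogenized normal speed of $\widetilde{H}_\mu$ in the direction $e$. Lemma~\ref{lem: r lip} records, uniformly in $x$, the bounds $a_\mu\le r_\mu(\cdot,x)\le A_\mu$ and the Lipschitz constant $L_\mu$ of the normal speed $r_\mu(\cdot,x)^{-1}$ in the direction variable, so the direction bound $|\overline{r}_\mu(e_1)-\overline{r}_\mu(e_2)|\le C(\overline{\mu})(1+L_\mu)|e_1-e_2|$ is exactly the statement, proved in \cite{ArmstrongCardaliaguet}, that the homogenized speed of a geometric evolution is no rougher in the direction than the normal speed itself; its proof there compares $m_\mu(\cdot,\mathcal{H}_{e_1}^-)$ with $m_\mu(\cdot,\mathcal{H}_{e_2}^-)$ by tilting the target hyperplane through the angle $|e_1-e_2|$ and absorbing the discrepancy using the bias estimate Proposition~\ref{prop: bias estimate} together with the a priori linear growth $a_\mu\,d(x,S)\le m_\mu(x,S)\le A_\mu\,d(x,S)$.

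\medskip

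For the two-sided bound on $\tfrac{d}{d\mu}\overline{r}_\mu(e)$, which involves the whole family $(\widetilde{H}_\mu)_{\mu>0}$ and so is not literally in \cite{ArmstrongCardaliaguet}, I would argue by direct comparison at nearby levels. Fix $e$ and $\mu<\nu$ in $(0,\overline{\mu}]$. The continuity estimate \eqref{eqn: r mu cont} supplies numbers $0<\lambda^-_{\mu,\nu}\le1\le\lambda^+_{\mu,\nu}$ with $\lambda^-_{\mu,\nu}\le r_\nu(\hat\xi,x)/r_\mu(\hat\xi,x)\le\lambda^+_{\mu,\nu}$ for all $\hat\xi,x$ and with $(\lambda^-_{\mu,\nu}-1)/(\nu-\mu)\to C_0^{-1}A_\mu^{-p}$ and $(\lambda^+_{\mu,\nu}-1)/(\nu-\mu)\to a_\mu^{-1}\omega(\mu)^{-1}$ as $\nu\downarrow\mu$. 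Since multiplication by a positive constant does not alter the direction of the gradient, the function $\lambda^-_{\mu,\nu}m_\mu(\cdot,\mathcal{H}_e^-)$ is a subsolution of the $\nu$-metric problem $H(Du,x)=\nu$ in $\mathcal{H}_e^+$ and $(\lambda^+_{\mu,\nu})^{-1}m_\nu(\cdot,\mathcal{H}_e^-)$ is a subsolution of $H(Du,x)=\mu$, so the comparison principle Lemma~\ref{lem: pmp comp} yields $\lambda^-_{\mu,\nu}m_\mu\le m_\nu\le\lambda^+_{\mu,\nu}m_\mu$ on $\mathcal{H}_e^+$. Taking expectations at $x=te$, dividing by $t$ and letting $t\to\infty$ gives $\lambda^-_{\mu,\nu}\overline{r}_\mu(e)\le\overline{r}_\nu(e)\le\lambda^+_{\mu,\nu}\overline{r}_\mu(e)$. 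Combining this with $a_\mu\le\overline{r}_\mu(e)\le A_\mu$ — obtained by comparing $m_\mu(\cdot,\mathcal{H}_e^-)$ with the linear sub- and supersolutions $a_\mu(x\cdot e)$ and $A_\mu(x\cdot e)$ — turns the difference quotient into $(\lambda^-_{\mu,\nu}-1)a_\mu\le\overline{r}_\nu(e)-\overline{r}_\mu(e)\le(\lambda^+_{\mu,\nu}-1)A_\mu$; dividing by $\nu-\mu$, letting $\nu\downarrow\mu$ (and symmetrically $\nu\uparrow\mu$) and using that $a_\mu\approx\mu^{1/p}\wedge\mu$ is comparable to $\mu$ while $A_\mu\le C(\overline{\mu})$ on $(0,\overline{\mu}]$ produces $C(\overline{\mu})^{-1}\mu\le\tfrac{d}{d\mu}\overline{r}_\mu(e)\le C(\overline{\mu})(\mu\omega(\mu))^{-1}$. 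Monotonicity of $\mu\mapsto\overline{r}_\mu(e)$ and joint continuity of $(\mu,e)\mapsto\overline{r}_\mu(e)$ follow from this estimate together with the direction bound.

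\medskip

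The delicate point is the direction bound with the precise constant $C(\overline{\mu})(1+L_\mu)$. Comparing $m_\mu(\cdot,\mathcal{H}_{e_1}^-)$ and $m_\mu(\cdot,\mathcal{H}_{e_2}^-)$ near $te_1$ is not immediate because, although the two target hyperplanes are $O(|e_1-e_2|)$-close near the origin, they diverge linearly away from it, so one cannot simply invoke monotonicity of $m_\mu$ in the target. The fix is localization: by finite speed of propagation (the mechanism behind Lemma~\ref{lem: finite speed}) only the parts of the two targets inside a ball $B_{Ct}(0)$ with $C\approx A_\mu/a_\mu$ affect $m_\mu$ at $te_1$, and there one has the inclusions $\mathcal{H}_{e_1}^--Ct|e_1-e_2|\,e_1\subseteq\mathcal{H}_{e_2}^-\subseteq\mathcal{H}_{e_1}^-+Ct|e_1-e_2|\,e_1$; monotonicity of $m_\mu$ in the target, $\real^d$-stationarity and the bias estimate then trap $\overline{r}_\mu(e_2)$ between $\overline{r}_\mu(e_1)(1\pm C|e_1-e_2|)$, after which one must check that the resulting constant $A_\mu^2 a_\mu^{-1}$ is controlled by $C(\overline{\mu})(1+L_\mu)$ — for which one may assume $\omega\le1$ without loss, since shrinking $\omega$ only weakens \eqref{eqn: assumption strongest}. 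Executing this ``localize, shift the target, pass to the limit'' chain carefully, and tracking the constants so that they collapse exactly to $(1+L_\mu)$, is the real work; since it is precisely the computation carried out in \cite{ArmstrongCardaliaguet}, we are content to cite it rather than reproduce it.
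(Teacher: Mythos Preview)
The paper states this lemma \emph{without proof}, so there is nothing to compare against; your proposal is in effect supplying what the authors omitted.

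Your argument for the two-sided bound on $\tfrac{d}{d\mu}\overline r_\mu(e)$ is correct and clean: the multiplicative trapping $\lambda^-_{\mu,\nu}\,r_\mu\le r_\nu\le\lambda^+_{\mu,\nu}\,r_\mu$ coming from \eqref{eqn: r mu cont} and the uniform bounds $a_\mu\le r_\mu\le A_\mu$, combined with the $1$-homogeneous reduction of Lemma~\ref{lem: eikonal eqn} and the comparison Lemma~\ref{lem: pmp comp}, does give $\lambda^-_{\mu,\nu}\,m_\mu\le m_\nu\le\lambda^+_{\mu,\nu}\,m_\mu$ pointwise, and passing to expectations and then to the limit $t\to\infty$ via Lemma~\ref{lem: expectations converge} transfers this to $\overline r_\mu$. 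The bookkeeping $A_\mu^{-p}a_\mu\gtrsim_{\overline\mu}\mu$ and $A_\mu a_\mu^{-1}\lesssim_{\overline\mu}\mu^{-1}$ on $(0,\overline\mu]$ is routine (for $\mu\le1$ one has $a_\mu\approx\mu$ and $A_\mu\approx1$; for $1\le\mu\le\overline\mu$ everything is bounded above and below). One cosmetic point: your ``$\to$'' for $(\lambda^\pm-1)/(\nu-\mu)$ should really be inequalities, since $\lambda^\pm$ are defined as extrema over $\hat\xi,x$; but the inequalities go the right way and the conclusion is unaffected.

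For the direction bound you are right to flag the localization step as the nontrivial one and to cite \cite{ArmstrongCardaliaguet}; your sketch (finite speed of propagation confines the relevant part of the target to $B_{Ct}$, where the two hyperplanes differ by a shift of order $t|e_1-e_2|$, then monotonicity in the target plus the bias estimate) is the correct mechanism. The verification that the resulting constant is controlled by $C(\overline\mu)(1+L_\mu)$ is fine once one notes that $A_\mu$ is bounded on $(0,\overline\mu]$ while $L_\mu\gtrsim a_\mu^{-1}$ when $\omega\le1$. Since the paper itself omits the proof and relies implicitly on \cite{ArmstrongCardaliaguet} for the level-set part, your treatment is at least as complete as what is in the text.
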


\medskip
 
In the next lemma, we assert that the quantitative star-shapedness  \eqref{eqn: assumption strongest} of the sub-level sets of $H$ are inherited by the $\overline H$. Since the latter may have flat parts, which are the the zero level set $\{\xi: \overline{H}(\xi) = 0\}$,  we no longer have strict monotonicity along radii.  However, as soon as $\overline{H}(te)$ is positive, the strict monotonicity returns.  The continuity properties of $\overline{H}$ are more easily understood via the approximate cell problem which homogenizes. Since this is standard, we omit the statements.

\begin{lem}\label{lem: hom op prop}
There exists $c=c(\overline{\mu})>0$ such that, for all $\xi \in \{\xi \in \real^d: 0<\overline{H}(\xi) \leq \overline{\mu}\},$
\[\tfrac{\xi}{|\xi|}  \cdot D_\xi\overline{H}(\xi) \geq c(\overline{\mu})\overline{H}(\xi)\omega(\overline{H}(\xi)).\]
In particular, the sub-levels of $\overline{H}$ are star-shaped with respect to the origin.
\end{lem}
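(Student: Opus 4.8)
The plan is to use that, along each fixed direction $e \in S^{d-1}$, the effective Hamiltonian $\overline{H}$ and the homogenized radius function $\overline{r}_\mu$ are inverse to one another — $\overline{H}$ as a function of $|\xi|$, $\overline{r}_\mu$ as a function of $\mu$ — and then to differentiate the relation defining one from the other.

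First I would record that for $\xi \neq 0$ with $\overline{H}(\xi) > 0$ the infimum in \eqref{takis140} is attained and that
\[
\overline{r}_{\,\overline{H}(\xi)}(\hat\xi) = |\xi|.
\]
By Lemma~\ref{lem: hom r prop} the map $\mu \mapsto \overline{r}_\mu(e)$ is $C^1$ on $(0,\infty)$ with $\tfrac{d}{d\mu}\overline{r}_\mu(e) \geq C(\overline{\mu})^{-1}\mu > 0$ on $(0,\overline{\mu}]$, hence it is continuous and strictly increasing; writing $L_0(e) := \lim_{\mu \to 0^+}\overline{r}_\mu(e) \geq 0$ and using that $\overline{r}_\mu(e) \to \infty$ as $\mu \to \infty$ (an a priori lower bound $\overline{r}_\mu(e) \gtrsim \mu^{1/p}\wedge\mu$ being inherited from Lemma~\ref{lem: r lip}), one gets that $\overline{H}(\rho e) = 0$ exactly when $0 < \rho \leq L_0(e)$, while for $\rho > L_0(e)$ the unique $\mu =: \Phi(\rho,e)$ with $\overline{r}_\mu(e) = \rho$ satisfies $\{\mu > 0 : \overline{r}_\mu(e) \geq \rho\} = [\Phi(\rho,e),\infty)$, so that $\overline{H}(\rho e) = \Phi(\rho,e)$. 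In other words, on the set $\{\overline{H} > 0\}$ and along each ray, $\rho \mapsto \overline{H}(\rho e)$ is the $C^1$ inverse of $\mu \mapsto \overline{r}_\mu(e)$.

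Now fix $\xi$ with $0 < \overline{H}(\xi) \leq \overline{\mu}$. Differentiating the identity $\overline{r}_{\Phi(\rho,\hat\xi)}(\hat\xi) = \rho$ at $\rho = |\xi|$ gives
\[
\frac{\xi}{|\xi|}\cdot D_\xi \overline{H}(\xi) \;=\; \frac{d}{d\rho}\Big|_{\rho = |\xi|}\overline{H}(\rho\hat\xi) \;=\; \Big(\tfrac{d}{d\mu}\overline{r}_\mu(\hat\xi)\big|_{\mu = \overline{H}(\xi)}\Big)^{-1},
\]
and inserting the upper bound $\tfrac{d}{d\mu}\overline{r}_\mu(e) \leq C(\overline{\mu})\tfrac{1}{\mu\omega(\mu)}$ of Lemma~\ref{lem: hom r prop} yields $\tfrac{\xi}{|\xi|}\cdot D_\xi\overline{H}(\xi) \geq C(\overline{\mu})^{-1}\overline{H}(\xi)\,\omega(\overline{H}(\xi))$, which is the claim with $c(\overline{\mu}) = C(\overline{\mu})^{-1}$. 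For the ``in particular'' assertion I would note that the strict monotonicity of $\mu \mapsto \overline{r}_\mu(e)$ forces $\rho \mapsto \overline{H}(\rho e)$ to be nondecreasing on $[0,\infty)$: for $0 < \rho_1 < \rho_2$, if $\overline{H}(\rho_1 e) = 0$ there is nothing to prove, while if $\overline{H}(\rho_1 e) > 0$ then $\rho_1 > L_0(e)$, hence also $\rho_2 > L_0(e)$ and $\overline{H}(\rho_2 e) > 0$, and $\rho_1 = \overline{r}_{\overline{H}(\rho_1 e)}(e) < \overline{r}_{\overline{H}(\rho_2 e)}(e) = \rho_2$ gives $\overline{H}(\rho_1 e) < \overline{H}(\rho_2 e)$. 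Since $\min\overline{H} \geq 0$, it follows that $\{\xi : \overline{H}(\xi) \leq \mu\}$ is star-shaped about the origin for every $\mu \geq 0$.

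The only delicate point I anticipate is regularity: interpreting $D_\xi\overline{H}$ and identifying its radial component with the one-sided radial derivative $\partial_\rho\overline{H}(\rho\hat\xi)$. The inverse-function step above already provides that $\rho \mapsto \overline{H}(\rho e)$ is $C^1$ on $\{\overline{H} > 0\}$, so this radial derivative exists classically; combined with the standard local Lipschitz continuity of $\overline{H}$ (inherited, as usual in coercive homogenization, from the approximate cell problem), at any point of differentiability of $\overline{H}$ its radial derivative agrees with $\tfrac{\xi}{|\xi|}\cdot D_\xi\overline{H}(\xi)$, so the inequality holds at every such point.
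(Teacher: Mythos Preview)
Your argument is correct and follows essentially the same route as the paper's proof: both exploit the fact that, along a fixed direction $e$, $\rho\mapsto\overline{H}(\rho e)$ and $\mu\mapsto\overline{r}_\mu(e)$ are inverse to one another, and both extract the radial lower bound on $D_\xi\overline{H}$ from the upper bound $\tfrac{d}{d\mu}\overline{r}_\mu(e)\le C(\overline{\mu})\,(\mu\omega(\mu))^{-1}$ of Lemma~\ref{lem: hom r prop}. The only cosmetic difference is that the paper carries out the inversion via a finite-difference argument (pick $\nu>\mu$, bound $\overline{r}_\nu(e)-\overline{r}_\mu(e)$, translate to $\overline{H}((t+\delta)e)\ge\nu$, then let $\nu\to\mu$), whereas you invoke the inverse-function derivative formula directly; your acknowledged regularity caveat about interpreting the inequality a.e.\ is exactly what the paper's difference-quotient formulation sidesteps.
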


\begin{proof}
Recall that, if  $\mu = \overline{H}(te) >0$, then $\overline{r}_\mu(e) = t$. For any $\nu >\mu$ let $\delta: = C\nu^{-1}(\omega(\nu) \wedge \omega(\mu))^{-1}(\nu - \mu)$. It follows from the Lipschitz property of $\mu \to \overline{r}_{\mu}(e)$ that  
$$\overline{r}_\nu(e) \leq \overline{r}_\mu(e)+ \delta,$$
and, in view of  the definition of $\overline{H}$, 
$$\nu \leq \overline{H}((t+\delta)e).$$
Thus
$$ \frac{\overline{H}((t+\delta)e) - \overline{H}(te)}{\delta} \geq \frac{\nu - \mu}{\delta} = c(\overline{\mu})\nu ( \omega(\nu) \wedge \omega(\mu)).$$
Taking the limit $\nu \to \mu$ and using the continuity of $\omega$ implies the desired lower bound.
\end{proof}

\subsection{Connection with the approximate correctors}\label{sec:connections}  Finally we describe  the connection between the planar metric problem and the solution $v^\delta(x,\xi)$ of the approximate corrector problem,
$$ \delta v^\delta + H(\xi + Dv^\delta,x) = 0 \ \hbox{ in } \ \real^d.$$
The aim is to show the convergence of $-\delta v^\delta(0,\xi) \to \overline{H}(\xi)$ stated in the next proposition.
Since the star-shapedness property and uniqueness of the metric problem do not play any significant role here, we omit any discussion about proofs for which we refer to \cite{ArmstrongCardaliaguet}.
\begin{prop}\label{prop: a correctors full rate}
There exist  $C=C(\overline \mu), c=c(\overline \mu)>0$ depending on the constants from \eqref{eqn: assumption bounds}, \eqref{eqn: assumption min H} and \eqref{eqn: assumption cont} such that, for all $\xi\in \R^d$ such that $0 \leq \overline{H}(\xi)  < \overline{\mu}$  and all $\lambda,\delta >0$,
$$\P(|\delta v^\delta(0,\xi)+\overline{H}(\xi)| >  \lambda \omega'(\delta|\log\delta|) ) \leq C\exp(-c\lambda^{4\wedge d})$$
where the modulus $\omega'$ is the inverse of the modulus $\lambda \mapsto \lambda^4/|C+\log (\lambda\omega(\lambda/2))|$ defined on $[0,1]$.
\end{prop}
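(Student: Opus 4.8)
The plan is to deduce the quantitative convergence of the approximate correctors from the quantitative homogenization of the planar metric problems established in Propositions~\ref{prop: fluctuation est} and~\ref{prop: bias estimate} (equivalently the bias Lemma~\ref{lem: expectations converge}), following the scheme of \cite{ArmstrongCardaliaguet}.  The first step is the by-now-standard deterministic comparison between $v^\delta$ and the metric problem: for each $\xi$ and each $\mu$ one sandwiches $-\delta v^\delta(0,\xi)$ between $\mu$ and a quantity controlled by the value at $0$ of $m_\mu(\cdot,\mathcal H_e^-,H(\xi+\cdot,\cdot))$ for the shifted Hamiltonian $H_\xi(\eta,x):=H(\xi+\eta,x)$, tested against the half-spaces $\mathcal H_e^{\pm}$ with $e=\hat\xi$.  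Concretely, if $\overline r_\mu(\hat\xi)\ge|\xi|$ then one expects $-\delta v^\delta(0,\xi)\lesssim\mu$ up to an error governed by how far $m_\mu(x,\mathcal H_e^-)$ deviates from its linear profile $\overline r_\mu(e)(x\cdot e)$, and symmetrically for the lower bound when $\overline r_\mu(\hat\xi)\le|\xi|$.  This is exactly the link between the definition \eqref{takis140} of $\overline H$ and the corrector, and it requires only coercivity together with Lemma~\ref{lem: pmp comp}; no star-shapedness beyond what is already built into the metric problem is needed here.

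The second step is to feed the probabilistic estimates into this deterministic sandwich.  From the fluctuation estimate (Proposition~\ref{prop: fluctuation est}) one gets, for $x$ with $x\cdot e\approx|\xi|$, that $|m_\mu(x,\mathcal H_e^-)-\E m_\mu(x,\mathcal H_e^-)|\ge\lambda$ has probability at most $C\exp(-ca_\mu\lambda^2/(1+|\xi|))$; from Lemma~\ref{lem: expectations converge} one gets $|\E m_\mu(te)-\overline r_\mu(e)t|\le C_\mu t^{1/2}\log^{1/2}(1+t)$ with $C_\mu=C a_\mu^{-1/2}(1+|\log L_\mu|)^{1/2}$.  Combining these at scale $t\approx|\xi|$ and then optimizing the choice of $\mu$ (using the quantitative monotonicity $\tfrac{d}{d\mu}\overline r_\mu\gtrsim\mu$ from Lemma~\ref{lem: hom r prop}, so that a gap of size $\lambda'$ in the radius corresponds to a gap of size $\approx\lambda'/\mu$ in $\mu$, hence in $\overline H$) produces an estimate of the shape
\[
\P\bigl(|\delta v^\delta(0,\xi)+\overline H(\xi)|>\lambda\,g(\delta|\log\delta|)\bigr)\le C\exp(-c\lambda^{2}),
\]
where $g$ is a modulus coming from inverting $\lambda\mapsto \lambda^2/|C+\log(\lambda\omega(\lambda/2))|$.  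The logarithmic factor arises from the $\log^{1/2}$ in the bias estimate together with a union bound over dyadic scales between $1$ and $\delta^{-1}$, and the $|\log\delta|$ inside $g$ reflects that one must run the metric problem out to the scale $\delta^{-1}$ to see the effective behaviour at the $\delta$-corrector level.

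The third step upgrades the exponent from $\lambda^2$ to $\lambda^{4\wedge d}$ and the modulus from $g$ to $\omega'$ (inverse of $\lambda\mapsto\lambda^4/|C+\log(\lambda\omega(\lambda/2))|$).  This sharpening is the genuinely delicate part, and it is imported verbatim from \cite{ArmstrongCardaliaguet}: it comes from iterating the fluctuation/bias bootstrap — using the already-obtained $O(\lambda^2)$ bound to sharpen the union bound controlling $N_R^\pm$ in Lemma~\ref{lem: union bound}, which in turn sharpens the almost-additivity, and so on — with the $4\wedge d$ reflecting the transition between a ``one-dimensional'' concentration regime and the higher-dimensional transversal averaging (the $d$ appearing because the transversal fluctuations over $B_{Rt}$ only average over $d-1$ directions, capping the gain at dimension $d$).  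Since this bootstrap is exactly the content of the main theorem of \cite{ArmstrongCardaliaguet} and the star-shapedness hypothesis enters only through the comparison principle Lemma~\ref{lem: pmp comp} and the localization Lemma~\ref{lem: localization in sublevels} — both of which we have already established for our Hamiltonians — we may simply invoke it after the level-set reduction of subsection~\ref{sec: reduction}.  I expect the main obstacle, were one to write this out in full, to be bookkeeping the $\mu$-dependence of all constants ($a_\mu$, $A_\mu$, $L_\mu$, $\omega(\mu)$) through the optimization over $\mu$ so that they collapse into the single modulus $\omega'$ stated; the probabilistic core itself is a black-box citation.
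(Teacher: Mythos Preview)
Your proposal is essentially correct and aligned with the paper's approach: the paper itself omits the proof entirely, stating only that ``the star-shapedness property and uniqueness of the metric problem do not play any significant role here'' and referring to \cite{ArmstrongCardaliaguet}. Your sketch of the three steps (deterministic comparison with the metric problem, feeding in the fluctuation and bias estimates, then the bootstrap to $\lambda^{4\wedge d}$) is a reasonable expansion of what that black-box citation contains, and you correctly identify that star-shapedness enters only through Lemmas~\ref{lem: pmp comp} and~\ref{lem: localization in sublevels}, which are already established.

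One minor wrinkle: your phrasing in step~1 via the shifted Hamiltonian $H_\xi(\eta,x)=H(\xi+\eta,x)$ is slightly off-track, since $H_\xi$ need not have sub-level sets star-shaped about the origin, so the metric problem for $H_\xi$ is not directly covered by the framework. The standard maneuver (and what \cite{ArmstrongCardaliaguet} does) is to work with the metric problem for $H$ itself and compare $v^\delta(\cdot,\xi)$ against $m_\mu(\cdot,\mathcal H_e^-)-\xi\cdot x$; on $\partial\mathcal H_e^-$ with $e=\hat\xi$ the linear term vanishes and the comparison goes through. This is equivalent in spirit to what you wrote but avoids invoking a metric problem outside the star-shaped class. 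Your speculative account of the $4\wedge d$ mechanism is also not quite the actual argument in \cite{ArmstrongCardaliaguet}, but since both you and the paper treat this as a citation, that is immaterial here.
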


\bibliographystyle{plain}
\bibliography{nonconvex_articles.bib}
\end{document}